\def\@tocline#1#2#3#4#5#6#7{\relax
\ifnum #1>\c@tocdepth 
  \else
    \par \addpenalty\@secpenalty\addvspace{#2}%
\begingroup \hyphenpenalty\@M
    \@ifempty{#4}{%
      \@tempdima\csname r@tocindent\number#1\endcsname\relax
 }{%
   \@tempdima#4\relax
 }%
 \parindent\z@ \leftskip#3\relax \advance\leftskip\@tempdima\relax
 \rightskip\@pnumwidth plus4em \parfillskip-\@pnumwidth
 #5\leavevmode\hskip-\@tempdima #6\nobreak\relax
 \ifnum#1<0\hfill\else\dotfill\fi\hbox to\@pnumwidth{\@tocpagenum{#7}}\par
 \nobreak
 \endgroup
  \fi}
    \newcommand*{\qrr@gobblenexttocentry}[5]{}
    \newcommand*{\qrr@gobblenexttocentry}[4]{}
\newcommand*{\addsubsection}{%
    \addtocontents{toc}{\protect\qrr@gobblenexttocentry}%
    \subsection}
\newcommand{\adj}{\rightleftarrows}
\newcommand{\QQ}{\mathbb{Q}}
\newcommand{\ZZ}{\mathbb{Z}}
\newcommand{\PP}{\mathbb{P}}
\newcommand{\NN}{\mathbb{N}}
\newcommand{\cA}{\mathcal{A}}
\newcommand{\cC}{\mathcal{C}}
\newcommand{\cD}{\mathcal{D}}
\newcommand{\cG}{\mathcal{G}}
\newcommand{\cS}{\mathcal{S}}
\newcommand{\cT}{\mathcal{T}}
\newtheorem{thm}{Theorem}[subsection]
\newtheorem{cor}[thm]{Corollary}
\newtheorem{ques}[thm]{Question}
\newtheorem{lem}[thm]{Lemma}
\newtheorem{prop}[thm]{Proposition}
\newtheorem{example}[thm]{Example}
\theoremstyle{definition}
\newtheorem{define}[thm]{Definition}
\newtheorem{notat}[thm]{Notation}
\theoremstyle{remark}
\newtheorem{rem}[thm]{Remark}
\DeclareMathOperator{\Ext}{Ext}
\DeclareMathOperator{\im}{Im}
\DeclareMathOperator{\Set}{\cS et}
\DeclareMathOperator{\Grp}{\cG rp}
\DeclareMathOperator{\cf}{cf}
\DeclareMathOperator{\supp}{supp}
\DeclareMathOperator{\ab}{ab}
\DeclareMathOperator{\AbC}{\cA b}
\def\coker{\textrm{coker\,}}
\def\im{\textrm{Im\,}}
\def\lrar{\longrightarrow}
\title{Inverse limits of left adjoint functors on pointed sets}
\author{
Ilan Barnea \and Saharon Shelah}
\address{Department of Mathematics\\
University of Haifa\\
Haifa\\
Israel}
\email{ilanbarnea770@gmail.com}
\address{Department of Mathematics\\
Hebrew University of Jerusalem\\
Jerusalem\\
Israel}
\email{shelah@math.huji.ac.il}
\thanks{The second author was partially supported by European Research Council grant 338821. Publication 1647
.}
\keywords{abelian groups, algebraically compact groups, cotorsion groups, left adjoint functors, inverse limits}
\begin{document}

\begin{abstract}
This paper is a continuation of \cite{BaSh}, where we studied the behaviour of the abelianization functor under inverse limits. Our main result in \cite{BaSh} was that if $\mathcal{T}$ is a countable directed poset and $G:\mathcal{T}\longrightarrow\mathcal{G} rp$ is a diagram of groups that satisfies the Mittag-Leffler condition, then the natural map
$$\mathrm{Ab}({\lim}_{t\in\mathcal{T}}G_t)\longrightarrow {\lim}_{t\in\mathcal{T}}\mathrm{Ab}(G_t)$$
is surjective, and its kernel is a cotorsion group. The abelianization is an example of a left adjoint functor from groups to abelian groups.

In this paper we study the behaviour under inverse limits of left adjoint functors from pointed sets to abelian groups. Such functors are classified by abelian groups, where to the abelian group $A$ corresponds the left adjoint functor $L_A:\mathcal{S} \text{et}_*\to\mathcal{A} \text{b}$ given by
$L_A(Y)=\bigoplus_{Y\setminus\{*\}}A.$
If $\mathcal{T}$ is a directed poset and $X:\mathcal{T}\longrightarrow\mathcal{S} \text{et}_*$ is a is diagram of pointed sets, we show that the natural map
$$\rho:L_A({\lim}_{t\in\mathcal{T}}X_t) \longrightarrow{\lim}_{t\in\mathcal{T}}L_A(X_t)$$
is injective. If, in addition, $\mathcal{T}$ is countable and $X$ satisfies the Mittag-Leffler condition, we show that the cokernel of $\rho$ is an algebraically compact group.  Compared with the main result in \cite{BaSh}, algebraically compact is much stronger then cotorsion as it also requires the Ulm length to be $\leq 1$.

We also show that this result, even in its weak form of cotorsion, does not extend to uncountable diagrams. Namely, if $A$ is not the product of a divisible group and a bounded group, we construct a directed poset $\mathcal{T}$ with $|\cT|=2^{\aleph_0}$ and a diagram $X:\mathcal{T}\longrightarrow\mathcal{S} \text{et}_*$, that satisfies the Mittag-Leffler condition, such that the cokernel of $\rho$ is not cotorsion.

If $\mathcal{T}$ is any directed poset and $X:\mathcal{T}\longrightarrow\mathcal{S} \text{et}^f_*$
is a diagram of finite pointed sets, we show that if $A$ is cotorsion then the cokernel of $\rho$ is cotorsion. We construct a counterexample showing that this last result is best possible in terms of $A$ for uncountable diagrams. We also prove various other results on the cokernel of $\rho$ depending on properties of the abelian group $A$ and the diagram $X$.
\end{abstract}

\maketitle
\tableofcontents

\section{Introduction}
In \cite{BaSh}, we investigated the behaviour of the abelianization functor with respect to inverse limits. We showed that if ${\cT}$ is a countable directed poset and $G:{\cT}\longrightarrow\Grp$ is a diagram of groups that satisfies the Mittag-Leffler condition, then the natural map
$$({\lim}_{\mathcal{T}}G)_{\ab}\longrightarrow {\lim}_{\mathcal{T}}(G_{\ab})$$
is surjective, and its kernel is a cotorsion group (see after Theorem \ref{t:main alg com} for the definitions of the Mittag-Leffler condition and cotorsion groups).

As mentioned in \cite{BaSh}, the abelianization is an example of a left adjoint functor. Namely, it is the left adjoint to the inclusion functor from abelian groups to groups. If $\cC$ is any category and $F:\cC\to\AbC$ is a left adjoint functor, then $F$ commutes with all direct limits that exist in $\cC$. Generalizing the philosophy behind \cite{BaSh}, it is natural to ask about the behaviour of left adjoint functors with respect to inverse limits.

In this paper, we address this question when $\cC=\Set_*$ is the category of pointed sets. This is a rather simple starting point, and indeed we are able to show much more then we did for the abelianization functor in \cite{BaSh}. Note that a functor $\Set_*\to\AbC$ is a left adjoint iff it commutes with all direct limits (see for instance \cite{AR}). A recent result by Hartl and Vespa \cite[Theorem 3.12]{HaVe} completely classifies such functors in terms of abelian groups. Specifically, they show that assigning to the abelian group $A$ the left adjoint functor $L_A:\Set_*\to\AbC$ given by
$$L_A(Y)=Y\wedge A=\bigoplus_{Y\setminus\{*\}}A$$
($*\in Y$ being the special point), can be extended to an equivalence of categories.


In this paper we investigate the behaviour of such functors $L_A$ with respect to inverse limits. Specifically, let $A$ be an abelian group, $\cD$ a small category and $X:\cD\lrar \Set_*$ a diagram of pointed sets. There is always a natural map
$$\rho:L_A({\lim}_\cD X)\lrar{\lim}_\cD L_A(X),$$
and we investigate how far is this map from being an isomorphism, by studying its kernel and cokernel. We will especially be interested whether the kernel and cokernel are cotorsion, extending our result in \cite{BaSh}, as explained above.
Our results depend on properties of the abelian group $A$ and the diagram $X$.

We will be using the following definition: An abelian group is called \emph{almost (uniquely) divisible} if it is the direct sum of a (uniquely) divisible group and a bounded group. Note that an almost divisible group is algebraically compact (see Corollary \ref{c:AD is AC}), and thus cotorsion.

The only result we show in this paper for a general diagram category $\cD$ is the following:
\begin{thm}[see Theorem \ref{t:p divisible gen}]\label{t:p divisible gen0}
  We have the following:
\begin{enumerate}
  \item If $A$ is $m$-bounded then the kernel and cokernel of $\rho$ are $m$-bounded.
  \item If $A$ is uniquely divisible then the kernel and cokernel of $\rho$ are uniquely divisible.
  \item If $A$ is almost uniquely divisible then the kernel and cokernel of $\rho$ are almost uniquely divisible (and thus cotorsion).
\end{enumerate}
\end{thm}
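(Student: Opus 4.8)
The plan is to reduce everything to the finitely-generated-index situation and then exploit the exactness properties of the three classes of groups (bounded, uniquely divisible, almost uniquely divisible) under the relevant operations: products, submodules, quotients, and filtered colimits.

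First I would make the natural map $\rho$ completely explicit. An element of $L_A(\lim_\cD X)$ is a finitely supported function $f\colon (\lim_\cD X)\setminus\{*\}\to A$; sending it to the compatible family $(\,\overline{f}_t\,)_{t}$, where $\overline{f}_t$ pushes $f$ forward along the projection $\lim_\cD X\to X_t$ (combining the $A$-values landing in the same non-basepoint of $X_t$, and discarding those landing on the basepoint), gives $\rho$. Thus $\lim_\cD L_A(X)$ consists of compatible families $(g_t)_t$ with each $g_t\colon X_t\setminus\{*\}\to A$ finitely supported, and $\rho$ is the obvious comparison. Both source and target are naturally $\End(A)$-modules (equivalently, if $A$ is an $R$-module for $R=\ZZ$ or any ring acting, the maps are $R$-linear); in particular $\ker\rho$ is a submodule of $L_A(\lim_\cD X)$ and $\coker\rho$ is a quotient of $\lim_\cD L_A(X)$.

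Now for (1): if $A$ is $m$-bounded, i.e. $mA=0$, then every group in sight is $m$-bounded — $L_A(Y)=\bigoplus A$ is $m$-bounded, an arbitrary product of $m$-bounded groups is $m$-bounded, hence $\lim_\cD L_A(X)$ is $m$-bounded, and submodules and quotients of $m$-bounded groups are $m$-bounded; so $\ker\rho$ and $\coker\rho$ are $m$-bounded. For (2): if $A$ is uniquely divisible it is a $\QQ$-vector space, so $L_A(Y)$, arbitrary products $\prod L_A(X_t)$, the subgroup $\lim_\cD L_A(X)$, and $L_A(\lim_\cD X)$ are all $\QQ$-vector spaces; $\rho$ is then a $\QQ$-linear map, and kernel and cokernel of a $\QQ$-linear map are again $\QQ$-vector spaces, hence uniquely divisible. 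For (3): write $A = D\oplus B$ with $D$ uniquely divisible and $B$ bounded, say $mB=0$. Since $L_A$ is additive in $A$ (it is part of an equivalence of categories, or directly: $\bigoplus_{Y\setminus\{*\}}(D\oplus B)=(\bigoplus_{Y\setminus\{*\}}D)\oplus(\bigoplus_{Y\setminus\{*\}}B)$, and this decomposition is natural in $Y$), the map $\rho=\rho_A$ decomposes as $\rho_D\oplus\rho_B$ compatibly on source and target — the only point to check is that $\lim$ commutes with the finite product $L_D(X)\times L_B(X)$, which it does since limits commute with finite products. Hence $\ker\rho_A=\ker\rho_D\oplus\ker\rho_B$ and $\coker\rho_A=\coker\rho_D\oplus\coker\rho_B$, and by parts (1) and (2) these are direct sums of a uniquely divisible group and a bounded group, i.e. almost uniquely divisible; by Corollary \ref{c:AD is AC} they are algebraically compact, in particular cotorsion.

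The only genuinely delicate point — and where I would be most careful — is the naturality of the splitting $L_A = L_D \oplus L_B$ as functors $\Set_*\to\AbC$, and its compatibility with the formation of $\rho$; but this is immediate from the defining formula $L_A(Y)=\bigoplus_{Y\setminus\{*\}}A$ together with the fact that a finite (here, binary) direct sum in $\AbC$ is simultaneously a product, so it commutes with the limit $\lim_\cD$ appearing in the target of $\rho$. No hypothesis on $\cD$ beyond smallness is used, and no Mittag-Leffler or countability assumption enters — consistent with the statement. I would present (1) and (2) as short parallel paragraphs and derive (3) formally from them as above.
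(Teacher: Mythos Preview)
Your proof is correct and follows essentially the same approach as the paper. The paper phrases parts (1) and (2) uniformly as ``$A$ is an $R$-module (for $R=\ZZ/m\ZZ$ or $\QQ$), hence $\rho$ is $R$-linear and its kernel and cokernel are $R$-modules,'' while you spell out the closure properties directly for (1) and use the $\QQ$-vector-space language for (2); and for (3) your inline verification that $\rho_A=\rho_D\oplus\rho_B$ is exactly the content of the paper's Lemma~\ref{l:oplus}. (Your opening sentence about a ``finitely-generated-index situation'' does not match what you actually do and should be dropped.)
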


We will concentrate in this paper mainly on the case when the diagram category is a directed poset (considered as a category which has a single morphism $t\to s$ whenever $t\geq s$). So from now on we assume $\cD=\cT$ is a directed poset. Our first observation is the following:
\begin{thm}[Lemma \ref{l:inj}]\label{t:inj}
The map $\rho$ is injective.
\end{thm}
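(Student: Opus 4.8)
The plan is to unwind the definition of $\rho$ and argue by contradiction. Recall that $L_A(\lim_\cT X)=\bigoplus_{(\lim_\cT X)\setminus\{*\}}A$, so any nonzero element $u$ may be written as a formal sum $u=\sum_{i=1}^{n}a_i\langle x_i\rangle$ with $n\geq 1$, where $x_1,\dots,x_n\in\lim_\cT X$ are pairwise distinct and all different from the base point $*$, and where $0\neq a_i\in A$. Since $\rho$ is the map induced by the projections $\pi_t\colon\lim_\cT X\to X_t$, proving that $\rho$ is injective amounts to showing that for such a $u$ there is some $t\in\cT$ with $L_A(\pi_t)(u)\neq 0$ in $L_A(X_t)$. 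Writing each $x_i$ as a compatible family $(x_i^t)_{t\in\cT}$ and denoting by $f_{ts}\colon X_t\to X_s$ the transition maps (so that $f_{ts}(x_i^t)=x_i^s$ for $t\geq s$), the component $L_A(\pi_t)(u)\in\bigoplus_{X_t\setminus\{*\}}A$ is the finitely supported function sending $x\in X_t\setminus\{*\}$ to $\sum_{i\,:\,x_i^t=x}a_i$.

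The heart of the matter is the combinatorial claim that there exists $t\in\cT$ for which $x_1^t,\dots,x_n^t$ are pairwise distinct elements of $X_t$, none equal to $*$. To prove it, I would note that, because the $f_{ts}$ are pointed maps, each set $U_i:=\{t\in\cT:x_i^t\neq *\}$ is upward closed: if $t\geq s$ and $x_i^t=*$ then $x_i^s=f_{ts}(x_i^t)=f_{ts}(*)=*$. Moreover $U_i\neq\emptyset$, since $x_i\neq *$ in the inverse limit forces $x_i^s\neq *$ for some $s$. Likewise, for $i\neq j$ the set $V_{ij}:=\{t\in\cT:x_i^t\neq x_j^t\}$ is nonempty (as $x_i\neq x_j$) and upward closed (if $t\geq s$ and $x_i^t=x_j^t$ then $x_i^s=f_{ts}(x_i^t)=f_{ts}(x_j^t)=x_j^s$). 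Now $\cT$ is directed, so the finitely many nonempty upward closed sets $U_1,\dots,U_n$ together with the $V_{ij}$ for $i<j$ have a common element: choosing one element in each and taking an upper bound $t$, upward closure places $t$ in all of them. This $t$ witnesses the claim.

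It then only remains to conclude: for this $t$ the assignment $i\mapsto x_i^t$ is injective with image in $X_t\setminus\{*\}$, so by the formula above $L_A(\pi_t)(u)$ is the function taking the value $a_i$ at $x_i^t$ for $1\leq i\leq n$ and $0$ elsewhere; as $n\geq 1$ and $a_1\neq 0$, this is a nonzero element of $L_A(X_t)$, whence $\rho(u)\neq 0$. The entire content of the argument is the combinatorial claim, and the one essential use of directedness is the intersection of the nonempty upward closed sets; I do not anticipate a real obstacle here, only the need to keep track of the direction of the transition maps and the fact that they preserve the base point — which is exactly why one should not expect this for a general diagram category and why the statement is restricted to directed posets.
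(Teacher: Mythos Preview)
Your proof is correct and follows essentially the same approach as the paper: both argue by contradiction, use that the sets where two distinct limit elements (including $*$) differ are nonempty and upward closed, and then invoke directedness to find a single $t$ at which all the $x_i(t)$ are pairwise distinct and different from $*$. The paper packages this by introducing $x_0:=*$ and picking witnesses $t_{k,l}$ directly, while you name the upward-closed sets $U_i$ and $V_{ij}$, but the content is identical.
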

We thus only need to study the cokernel of $\rho$. Another general result for a directed poset we show is:
\begin{thm}[Corollary \ref{c:tor free}]
If $A$ is torsion free then the cokernel of $\rho$ is torsion free.
\end{thm}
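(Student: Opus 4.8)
The plan is to work elementwise with the explicit description of $\rho$. Write $X = \lim_{t\in\cT} X_t$, so an element of $L_A(X) = \bigoplus_{X\setminus\{*\}} A$ is a finitely supported function on the non-basepoint threads, and $\rho$ sends such a function to the compatible family of its images in the $L_A(X_t) = \bigoplus_{X_t\setminus\{*\}} A$. Suppose $(\xi_t)_{t\in\cT} \in \lim_t L_A(X_t)$ represents a torsion class in the cokernel; that is, there is an integer $n\geq 1$ and an element $\eta \in L_A(X)$ with $n\cdot(\xi_t)_t = \rho(\eta)$ in $\lim_t L_A(X_t)$. First I would observe that for each $t$, $\xi_t = \sum_{i} a_i^{(t)}\cdot[x_i^{(t)}]$ is a finite sum over distinct non-basepoint elements $x_i^{(t)}\in X_t$, and the equation $n\xi_t = (\text{image of }\eta)$ forces, coordinate by coordinate, each coefficient of $\xi_t$ to lie in $\tfrac{1}{n}$ of a coefficient of $\eta$ — but more importantly it forces $n a_i^{(t)}$ to equal a coefficient of $\eta$, hence (since $A$ is torsion free and $n$ fixed) the support and coefficients of $\xi_t$ are ``rigid'': each $x_i^{(t)}$ maps, under the transition maps of the diagram, compatibly, and $n$ divides the relevant coefficient of $\eta$ in $A$ need not hold — rather, what holds is that the coefficient $a_i^{(t)}$ is uniquely determined.

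The key step is then to reconstruct a preimage of $(\xi_t)_t$ itself (not just $n$ times it) under $\rho$, which will show the class was already zero in the cokernel, i.e. the cokernel has no nonzero torsion. Concretely: fix $t_0$ and a basis element $a\cdot[x]$ appearing in $\xi_{t_0}$ with $a\neq 0$. Compatibility of the family $(\xi_t)_t$ under the transition maps $L_A(X_s)\to L_A(X_t)$ for $s\geq t$, together with the equation $n(\xi_t)_t = \rho(\eta)$ and $\eta$ having \emph{finite} support in $L_A(X)$, should let me show that the threads $x$ occurring with nonzero coefficient ``stabilize'' to genuine elements of the inverse limit $X$: the point is that $\rho(\eta)$ has, in each $L_A(X_t)$, support contained in the image of $\supp(\eta)\subseteq X$, so $n\xi_t$ — hence $\xi_t$, since $A$ is torsion free so multiplication by $n$ is injective and support-preserving — is supported on $\{$ images in $X_t$ of the finitely many threads in $\supp\eta\}$. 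Thus $\xi_t = \sum_{x\in\supp\eta} c_x^{(t)}\cdot[x_t]$ with $n c_x^{(t)} = (\text{coefficient of }\eta\text{ at }x)$, forcing $c_x^{(t)}$ independent of $t$; set $\zeta = \sum_x c_x\cdot[x] \in L_A(X)$, and then $\rho(\zeta) = (\xi_t)_t$, so $(\xi_t)_t$ is already in the image of $\rho$ and its class in the cokernel is zero.

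One subtlety to handle carefully: distinct threads in $\supp\eta\subseteq X$ could a priori have the same image in some $X_t$, so the coefficient $c_x^{(t)}$ I read off from $\xi_t$ is really a sum of the $c_x$ over threads with a common image; to untangle this I would use that $\cT$ is directed and the threads are genuinely distinct in the limit, so they become distinct at some level $s$, and pass to all $t\geq s$ to recover each $c_x$ individually — then invoke directedness again to get a single consistent definition. I expect this bookkeeping with the finite support of $\eta$ and the transition maps to be the main (though routine) obstacle; the algebra is trivial once one knows multiplication by $n$ on $A$ is injective. Note this argument does not even use countability of $\cT$ or the Mittag-Leffler condition, consistent with the statement being a ``general result for a directed poset.''
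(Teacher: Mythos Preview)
Your argument is correct and rests on the same key observation as the paper: since $A$ is torsion free, multiplication by $n$ preserves supports, so $\supp(\xi_t)=\supp(n\xi_t)\subseteq\{x(t):x\in\supp\eta\}$ has size bounded independently of $t$. The paper, however, packages the second half of your argument into a standalone lemma (Proposition~\ref{p:bounded}): an element $h\in\lim_\cT(X\wedge A)$ lies in the image of $\rho$ \emph{iff} $\{|h(t)|:t\in\cT\}$ is bounded. With that lemma in hand the proof is two lines: $|nh(t)|=|h(t)|$, so boundedness of the former gives boundedness of the latter. Your hands-on reconstruction of the preimage $\zeta$ (picking a level $s$ where the threads in $\supp\eta$ separate, reading off the unique $c_x$ with $nc_x=\eta_x$ there, and propagating by directedness) is exactly the proof of the ``bounded $\Rightarrow$ in image'' direction of that proposition, specialized to the situation where the relevant threads are already handed to you via $\supp\eta$.
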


First suppose that $\cT$ is countable. Since $L_A$ sends epimorphisms to epimorphisms it follows easily from \cite[Theorem 0.0.4]{BaSh} or \cite[Statement 1]{Akh} that if $X$ satisfies the Mittag-Leffler condition, the cokernel of $\rho$ is cotorsion. However, in this paper we prove a much stronger result.
\begin{thm}[Theorem~\ref{t:alg compact}]\label{t:main alg com}
If $\cT$ is countable and $X$ satisfies the Mittag-Leffler condition, then the cokernel of $\rho$ is algebraically compact (and thus cotorsion).
\end{thm}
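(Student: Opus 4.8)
The plan is to reduce the statement to a concrete countable cofinal subsystem and then recognize the cokernel as an explicitly presented group of a type already known to be algebraically compact. Since $\cT$ is countable and directed, I would first choose a cofinal chain $t_0\le t_1\le t_2\le\cdots$ in $\cT$; because inverse limits over a directed poset are computed over any cofinal subset, and the Mittag-Leffler condition passes to cofinal subsystems, we may assume from the outset that $\cT=\NN$ with the usual order and that $X:\NN\to\Set_*$ is a tower $\cdots\to X_{n+1}\xrightarrow{f_n} X_n\to\cdots$ satisfying Mittag-Leffler. Replacing $X_n$ by the eventual image $\bigcap_{m\ge n}\img(X_m\to X_n)$ (a sub-pointed-set), which by Mittag-Leffler stabilizes and has the same inverse limit, I would further reduce to the case where all transition maps $f_n:X_{n+1}\to X_n$ are \emph{surjective}. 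This is harmless: passing to the Mittag-Leffler kernels changes neither $\lim X$ nor $\lim L_A(X)$ up to the relevant isomorphism, and the latter because $L_A$ preserves surjections and the induced tower $\{L_A(X_n)\}$ still has surjective transition maps, so it is itself Mittag-Leffler with vanishing $\lim^1$.

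With surjective transition maps in hand, the cokernel of $\rho$ becomes computable. Write $X_n'=X_n\setminus\{*\}$. The map $f_n$ restricts to a surjection $X_{n+1}'\twoheadrightarrow X_n'$ together with the collapsing set $C_n=f_n^{-1}(*)\cap X_{n+1}'$. An element of $\lim_n L_A(X_n)=\lim_n\bigoplus_{X_n'}A$ is a compatible sequence of \emph{finitely supported} functions $\xi_n:X_n'\to A$ with $\xi_n=(f_n)_*\xi_{n+1}$, i.e.\ $\xi_n(x)=\sum_{y\in f_n^{-1}(x)}\xi_{n+1}(y)$; the image of $\rho$ consists of those sequences coming from a \emph{single} finitely supported function on $\lim_n X_n'$. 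The obstruction to being in the image is thus an ``infinite-support at infinity'' phenomenon, and the standard bookkeeping identifies
$$\Coker\rho\;\cong\;\Bigl(\textstyle\prod_{n} \bigoplus_{C_n} A\Bigr)\Big/\!\!\sim$$
or, more precisely, a sub quotient of such a product-of-direct-sums along the tower; the key structural point is that it is built from $A$ by a countable iteration of the operations ``direct sum'' and ``inverse limit of a surjective tower,'' with no infinite product of distinct copies appearing in an uncontrolled way. I would make this identification precise by analyzing, level by level, the short exact sequences $0\to\bigoplus_{C_n}A\to L_A(X_{n+1})\to L_A(X_n)\to 0$ and passing to $\lim$/$\lim^1$; since each $\bigoplus_{C_n}A$-tower and each $L_A(X_n)$-tower has surjective maps, all $\lim^1$ terms vanish and one extracts $\Coker\rho$ as the $\lim^1$-free cokernel of $L_A(\lim X)\to\lim L_A(X)$.

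Finally I would invoke the structure theory of algebraically compact groups. The class of algebraically compact groups is closed under direct products, under direct summands, and — crucially for a countable tower — a group that is the inverse limit of a tower of algebraically compact groups with surjective transition maps whose kernels are again algebraically compact is algebraically compact, equivalently one uses that a group is algebraically compact iff it is a direct summand of a group of the form $\prod_i C_i$ (or equivalently is pure-injective). One shows $\bigoplus_{C_n}A$ embeds as a pure subgroup of $\prod_{C_n}A$ with algebraically compact complement issues handled by the Mittag-Leffler bounds from Theorem~\ref{t:p divisible gen0}, and then assembles: $\Coker\rho$ sits in a short exact sequence (or is a summand) built from the algebraically compact pieces $\prod_n\bigl(\text{completion of }\bigoplus_{C_n}A\bigr)$, and algebraic compactness is inherited. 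The point that ``algebraically compact'' rather than merely ``cotorsion'' comes out — i.e.\ the Ulm length $\le 1$ claim highlighted in the introduction — is exactly that the only limit construction used is a \emph{single} inverse limit of a surjective tower of products of copies of $A$, which cannot raise the Ulm length past that of the completion of $\bigoplus A$.

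The main obstacle I anticipate is the second step: giving a clean, honest identification of $\Coker\rho$ as a manageable group rather than an opaque subquotient. The surjectivity reduction controls $\lim^1$, but tracking which compatible sequences $(\xi_n)$ actually lift to a global finitely supported function on $\lim X_n'$ requires care with the combinatorics of the fibers $f_n^{-1}(x)$ and the collapsing sets $C_n$ — in particular distinguishing ``support escaping to infinity along an infinite branch of the tree $\varprojlim X_n'$'' from ``support spread over infinitely many collapsing sets.'' Once $\Coker\rho$ is pinned down as (a summand of) an inverse limit of a surjective tower whose layers are direct sums of copies of $A$, the appeal to pure-injectivity/algebraic-compactness closure properties is routine.
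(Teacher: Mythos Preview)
Your reduction to a tower over $\NN$ with surjective transition maps is fine and matches the paper. The genuine gap is in the endgame: you try to conclude algebraic compactness of $\Coker\rho$ from closure properties of the class of algebraically compact groups applied to ``pieces'' built out of $A$. But the theorem is for an \emph{arbitrary} abelian group $A$, and neither $A$ nor $\bigoplus_{C_n}A$ need be algebraically compact (e.g.\ $A=\ZZ$). So there is no way to feed algebraically compact ingredients into a closure-under-products/limits/summands argument. Your appeal to Theorem~\ref{t:p divisible gen0} does not help here: that result only covers bounded, uniquely divisible, or almost uniquely divisible $A$. The phenomenon you are trying to capture is already visible in Example~\ref{e:finite}, where $\Coker\rho\cong\prod_\NN A/\bigoplus_\NN A$; that this quotient is algebraically compact for \emph{every} $A$ is Hulanicki's theorem, which is not a formal consequence of closure properties and is exactly the kind of result the paper is generalizing.

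The paper's proof proceeds quite differently. After the same reduction to surjective towers, it first gets cotorsion from the earlier paper \cite{BaSh}, and then shows directly that the Ulm length is $\leq 1$, i.e.\ $p^\omega H\subseteq p^{\omega+1}H$ for every prime $p$. Given $[f]\in p^\omega H$, one has witnesses $g_k$ with $[f]=p^k[g_k]$, hence $f-p^k g_k\in(\lim_\NN X)\wedge A$ is supported on finitely many threads $x_1,\dots,x_{l_k}\in\lim_\NN X$. The heart of the argument is a careful level-by-level recursive construction of $h\in\lim_\NN(X\wedge A)$ with $p[h]=[f]$ and, for each $m$, $p^{m-1}\mid h(n)_a$ whenever $a\notin\{x_1(n),\dots,x_{l_m}(n)\}$; a separate lemma (Proposition~\ref{p:devide}) then gives $p^{m-1}\mid[h]$ for all $m$, so $[h]\in p^\omega H$ and $[f]=p[h]\in p^{\omega+1}H$. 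This is a hands-on divisibility argument using the bounded-support characterization of $K(X,A)$ (Proposition~\ref{p:bounded}), not a structural identification of $\Coker\rho$; your proposed decomposition into ``collapsing-set'' layers is neither needed nor, as you yourself flag, easy to make precise.
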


Recall that $X$ satisfies the Mittag-Leffler condition if for every $t\in\cT$ there exists $s\geq t$ such that for every $r\geq s$ we have $$\im (X(s)\to X(t))=\im (X(r)\to X(t)).$$
If $X$ has surjective connecting homomorphisms or $X$ is diagram of finite pointed sets then $X$ satisfies Mittag-Leffler condition.

Recall also that an abelian group $A$ is called \emph{cotorsion} if it satisfies $\Ext(\QQ,A)=0$ (or, equivalently, $\Ext(F,A)=0$ for any torsion free abelian group). The group $A$ is called \emph{algebraically compact} if it is cotorsion, and its Ulm length does not exceed $1$. Algebraically compact groups can be completely classified by a countable collection of cardinals (see Balcerzyk \cite{Bal}).

\begin{rem}
  The question of describing the cardinal invariants corresponding to $\coker(\rho)$ in Theorem \ref{t:main alg com}, in terms of cardinal invariants of the diagram $X$ and the abelian group $A$, will be addressed in a future paper. This will also reveal which algebraically compact groups can appear as $\coker(\rho)$.
\end{rem}

\begin{rem}
  In Example \ref{e:finite} we construct a countable directed poset $\cT$, together with a diagram $X:\cT\lrar\Set_*$, composed of finite sets (and thus satisfies Mittag-Leffler condition), such that
  $$\coker (\rho)\cong \prod_{\NN}A/\bigoplus_{\NN}A.$$
  Thus, Theorem \ref{t:main alg com} can be viewed as an extension of an old result of Hulanicki \cite{Hul}. This also gives many examples where $\coker(\rho)$ is non trivial.
\end{rem}

Another result we show for countable diagrams is

\begin{thm}[see Corollary \ref{c:almost divisible}]\label{c:almost divisible0}
If $\cT$ is countable and $X$ is satisfies the Mittag-Leffler condition, we have the following:
\begin{enumerate}
  \item If $A$ is $p$-divisible then the cokernel of $\rho$ is $p$-divisible.
  \item If $A$ is divisible then the cokernel of $\rho$ is divisible.
  \item If $A$ is almost divisible then the cokernel of $\rho$ is almost divisible.
\end{enumerate}
\end{thm}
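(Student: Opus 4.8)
The plan is to reduce all three parts to part (1), and to prove (1) by showing that — after two harmless normalisations of the diagram — the group $\lim_{\cT} L_A(X)$ is itself $p$-divisible. For the reductions among the parts, observe that for any abelian groups $A_1,A_2$ there is a natural isomorphism of functors $L_{A_1\oplus A_2}\cong L_{A_1}\oplus L_{A_2}\colon\Set_*\to\AbC$, and since finite direct sums commute with inverse limits this gives $\coker(\rho)\cong\coker(\rho_{A_1})\oplus\coker(\rho_{A_2})$ whenever $A=A_1\oplus A_2$. Granting (1): a divisible group is $p$-divisible for every prime $p$, so $\coker(\rho)$ is then $p$-divisible for every $p$, hence divisible; this is (2). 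If $A=A_1\oplus B$ with $A_1$ divisible and $B$ bounded, say $mB=0$, then $\coker(\rho)\cong\coker(\rho_{A_1})\oplus\coker(\rho_B)$, the first summand being divisible by (2) and the second $m$-bounded by Theorem~\ref{t:p divisible gen0}(1); so $\coker(\rho)$ is almost divisible, which is (3).

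For (1) I would first normalise the diagram. Since $\cT$ is countable and directed it contains a cofinal chain (if $\cT$ has a maximum the statement is trivial), and passing to it changes none of $\lim_{\cT} X$, $\lim_{\cT} L_A(X)$, $\rho$ up to canonical isomorphism; so we may assume $\cT=\NN$. Next, let $X_n'\subseteq X_n$ be the stable image $\bigcap_{m\ge n}\img(X_m\to X_n)$, which exists by the Mittag-Leffler condition; then $X'$ is a subtower with \emph{surjective} connecting maps and $\lim X'=\lim X$. Crucially, $\lim L_A(X)=\lim L_A(X')$ and $\img\rho$ is unchanged: an element of $\lim L_A(X)$ has, in coordinate $n$, support contained in $\bigcap_m\img(X_m\to X_n)=X_n'$, which follows from $\img L_A(g)=\bigoplus_{g(Y)\setminus\{*\}}A$ together with the elementary identity $\bigcap_m\bigl(\bigoplus_{S_m\setminus\{*\}}A\bigr)=\bigoplus_{(\bigcap_m S_m)\setminus\{*\}}A$ applied to $S_m=\img(X_m\to X_n)$. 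Hence $\coker(\rho)$ is unaffected, and we may assume every connecting map $f_n\colon X_{n+1}\to X_n$ is surjective; since $L_B$ preserves epimorphisms for every $B$, all connecting maps of the towers $L_A(X)$ and $L_{A[p]}(X)$ are then surjective as well.

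Under these hypotheses I claim $\lim_{\NN} L_A(X)$ is $p$-divisible, which suffices for (1) since $\coker(\rho)$ is a quotient of it and quotients of $p$-divisible groups are $p$-divisible. Fix $\eta=(\eta_n)_n\in\lim L_A(X)$. Since $A$, hence each $L_A(X_n)$, is $p$-divisible, choose $\zeta_n^{(0)}\in L_A(X_n)$ with $p\zeta_n^{(0)}=\eta_n$. The defects $\delta_n:=(f_n)_*\zeta_{n+1}^{(0)}-\zeta_n^{(0)}$ satisfy $p\delta_n=(f_n)_*\eta_{n+1}-\eta_n=0$, so $\delta_n\in L_A(X_n)[p]=L_{A[p]}(X_n)$. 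Because the tower $L_{A[p]}(X)$ has surjective connecting maps, its $\lim^1$ vanishes, so there are $\gamma_n\in L_{A[p]}(X_n)$ with $\gamma_n-(f_n)_*\gamma_{n+1}=\delta_n$ for all $n$ (explicitly: put $\gamma_0=0$ and lift successively using surjectivity). Then $\zeta_n:=\zeta_n^{(0)}+\gamma_n$ is a coherent family, since $(f_n)_*\zeta_{n+1}=(\zeta_n^{(0)}+\delta_n)+(\gamma_n-\delta_n)=\zeta_n$; thus $\zeta=(\zeta_n)_n\in\lim L_A(X)$, and $p\zeta_n=\eta_n+p\gamma_n=\eta_n$ because $\gamma_n$ is $p$-torsion. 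Hence $\eta=p\zeta$, as claimed.

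The step needing the most care is the Mittag-Leffler normalisation: one must genuinely verify that replacing $X$ by its stabilisation $X'$ preserves $\lim L_A(X)$ and $\img\rho$, hence $\coker(\rho)$ — this is where the support/intersection identity is used and where countability of $\cT$ (via the reduction to $\NN$) enters, for over a non-surjective tower $\lim L_A(X)$ need not be $p$-divisible. The rest is the short cocycle computation above, whose only subtlety is that the defect $(\delta_n)_n$ must be killed by a coboundary $(\gamma_n)_n$ lying entirely in the $p$-torsion subtower $L_{A[p]}(X)$, which is exactly what surjectivity of that subtower's connecting maps delivers. Note that this argument yields the conclusion without appealing to Theorem~\ref{t:main alg com}.
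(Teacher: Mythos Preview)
Your proof is correct, and the reductions of (2) and (3) to (1) match the paper's exactly (via Lemma~\ref{l:oplus} and Theorem~\ref{t:p divisible gen}). For part (1), however, you take a genuinely different route. The paper first proves that the functor $H(X,-)$ is \emph{exact} under the Mittag--Leffler hypothesis (Proposition~\ref{p:exact}, using that $X\wedge A$ inherits Mittag--Leffler from $X$ so that ${\lim}^1(X\wedge A)=0$), deduces $H(X,nA)\cong nH(X,A)$ (Corollary~\ref{c:exact}), and then reads off $H(X,A)=H(X,pA)=pH(X,A)$ from $A=pA$. You instead normalise the diagram to its stable subtower $X'$ with surjective connecting maps (this is where you use Mittag--Leffler), check that $G$, $K$, and hence $H$ are unchanged, and then show directly that $G(X',A)$ is $p$-divisible by the explicit cocycle computation: lift levelwise, note the defects $\delta_n$ lie in the $p$-torsion subtower $L_{A[p]}(X')$, and kill them by a coboundary $(\gamma_n)$ built inductively from surjectivity of that subtower.

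Both arguments ultimately rest on the vanishing of a ${\lim}^1$ for a surjective (or Mittag--Leffler) tower, but they package it differently. The paper's functorial approach yields exactness of $H(X,-)$ as a reusable tool (it is invoked again elsewhere, e.g.\ in Corollary~\ref{c:mdiv}); your approach is more self-contained and elementary, avoiding the snake-lemma setup at the cost of the explicit normalisation step. One minor wording point: the intersection $X'_n=\bigcap_{m\ge n}\img(X_m\to X_n)$ always exists; what Mittag--Leffler actually buys you is that the intersection \emph{stabilises}, which is what guarantees the connecting maps of $X'$ are surjective --- and that is precisely where your argument needs it.
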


We now turn to the case of uncountable $\cT$. Recall that the poset $\cT$ is called \emph{$\aleph_1$-directed} if for every countable subset $S\subseteq\cT$ there exists $t\in \cT$ such that $t\geq s$ for every $s\in S$. We show
\begin{thm}[Corollary \ref{c:a1 directed}]\label{t:main a1 directed}
If $\cT$ is $\aleph_1$-directed then $\rho$
is an isomorphism.
\end{thm}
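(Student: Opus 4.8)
The plan is to combine the injectivity already proved in Theorem~\ref{t:inj} with a proof that, under $\aleph_1$-directedness, $\rho$ is also surjective. First I would make both sides explicit. An element of $L_A(Y)=\bigoplus_{Y\setminus\{*\}}A$ is a finitely supported function $\xi\colon Y\setminus\{*\}\to A$; for a pointed map $f\colon Y\to Z$ the homomorphism $L_A(f)$ sends such a $\xi$ to the function $z\mapsto\sum_{f(y)=z}\xi(y)$ on $Z\setminus\{*\}$. Hence a point of ${\lim}_\cT L_A(X)$ is a compatible family $\xi=(\xi_t)_{t\in\cT}$ of finitely supported functions $\xi_t\colon X_t\setminus\{*\}\to A$, and $\rho$ carries a finite $A$-combination $\sum_{i=1}^{k}a_i[x^{(i)}]$ of threads $x^{(i)}\in({\lim}_\cT X)\setminus\{*\}$ to the family whose $t$-th term is $z\mapsto\sum_{x^{(i)}_t=z}a_i$. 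So surjectivity amounts to realizing an arbitrary compatible family $\xi$ from finitely many threads.

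The key step — the one I expect to require real care, and the place where $\aleph_1$-directedness must be used, since by the examples in the paper the conclusion already fails for general countable $\cT$ — is to bound the support sizes $n_t:=|\supp\xi_t|$. Write $\phi_{ts}\colon X_t\to X_s$ for the structure map when $t\ge s$. From $\xi_s(z)=\sum_{\phi_{ts}(y)=z}\xi_t(y)$ one sees $\supp\xi_s\subseteq\phi_{ts}(\supp\xi_t)$, whence $n_s\le n_t$; so $t\mapsto n_t$ is order preserving. If it were unbounded, I would use directedness to build an increasing chain $t_1\le t_2\le\cdots$ with $n_{t_k}\ge k$, then invoke $\aleph_1$-directedness to choose an upper bound $t^\ast$ of all the $t_k$, arriving at the contradiction $n_{t^\ast}\ge n_{t_k}\ge k$ for every $k$. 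Hence $m:=\sup_t n_t$ is finite; the set $U_m:=\{t\in\cT:n_t=m\}$ is nonempty (the supremum is attained) and, by monotonicity of $n_t$, upward closed, hence cofinal and directed in $\cT$.

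On $U_m$ the structure maps are as good as possible. For $t\ge s$ in $U_m$, combining $|\supp\xi_s|=m=|\supp\xi_t|$ with $\supp\xi_s\subseteq\phi_{ts}(\supp\xi_t)\setminus\{*\}$ and $|\phi_{ts}(\supp\xi_t)|\le m$ forces three things: $*\notin\phi_{ts}(\supp\xi_t)$, $\phi_{ts}$ is injective on $\supp\xi_t$, and $\phi_{ts}(\supp\xi_t)=\supp\xi_s$; consequently $\phi_{ts}$ restricts to a bijection $\supp\xi_t\to\supp\xi_s$ with $\xi_s(\phi_{ts}(y))=\xi_t(y)$ for $y\in\supp\xi_t$. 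Thus $\{\supp\xi_t\}_{t\in U_m}$ is an inverse system of $m$-element sets with bijective transition maps, so its limit is an $m$-element set, say $\{x^{(1)},\dots,x^{(m)}\}$; since $U_m$ is cofinal we have ${\lim}_{U_m}X={\lim}_\cT X$, and each $x^{(i)}$ becomes a thread in ${\lim}_\cT X$, these threads being pairwise distinct and all $\ne*$ because their $t$-components (for $t\in U_m$) are the $m$ distinct elements of $\supp\xi_t\subseteq X_t\setminus\{*\}$. The coefficient-preserving property shows $\xi_t(x^{(i)}_t)$ is independent of $t\in U_m$; call it $a_i\in A\setminus\{0\}$. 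Finally I would check that $\xi$ and $\rho\big(\sum_{i=1}^{m}a_i[x^{(i)}]\big)$ have equal $t$-th terms for each $t\in U_m$ — both are supported exactly on $\{x^{(1)}_t,\dots,x^{(m)}_t\}$ and take the value $a_i$ at $x^{(i)}_t$ — and then conclude equality over all of $\cT$ from cofinality of $U_m$. This exhibits $\xi$ in the image of $\rho$, so $\rho$ is surjective, hence an isomorphism. The combinatorial verifications (bijective preservation of supports, and that the limit of a system of finite sets with bijective maps is a finite set of the same size) are routine and I would dispatch them briefly.
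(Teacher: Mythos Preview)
Your proof is correct and follows essentially the same approach as the paper. The paper's own proof of Corollary~\ref{c:a1 directed} is shorter only because it invokes Proposition~\ref{p:bounded} (the characterization $h\in K(X,A)\iff\{|h(t)|:t\in\cT\}$ is bounded) as a black box: it uses $\aleph_1$-directedness exactly as you do to bound the support sizes, and then Proposition~\ref{p:bounded} delivers the preimage. Your second half (constructing the threads on the cofinal upward-closed set $U_m$ where the maximum support size is attained) is precisely the argument in the paper's proof of Proposition~\ref{p:bounded}, so you have just inlined that proposition rather than taking a genuinely different route.
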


Combining Theorems \ref{t:main alg com} and \ref{t:main a1 directed} it is not hard to show
\begin{thm}[Corollary \ref{c:alg compact}]\label{t:main ord alg com}
If $\cT=\lambda$ is an ordinal and $X$ satisfies the Mittag-Leffler condition, then the cokernel of $\rho$
is algebraically compact (and thus cotorsion).
\end{thm}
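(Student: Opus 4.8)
The plan is to reduce the case of a general ordinal $\lambda$ to the two cases already handled: countable cofinality (Theorem \ref{t:main alg com}) and $\aleph_1$-directedness (Theorem \ref{t:main a1 directed}). The key structural observation is that an ordinal $\lambda$, viewed as a directed poset, is controlled by its cofinality $\kappa = \cf(\lambda)$. First I would fix a strictly increasing cofinal map $f : \kappa \to \lambda$ and note that for the purpose of computing $\lim_\lambda X$ and $\lim_\lambda L_A(X)$ we may replace the diagram over $\lambda$ by its restriction along $f$, a diagram $X' : \kappa \to \Set_*$; this is the standard fact that limits are unchanged under passage to a cofinal subdiagram, and one checks that $X'$ inherits the Mittag-Leffler condition from $X$ (given $\alpha < \kappa$, pick $s \geq f(\alpha)$ witnessing ML for $X$ at $f(\alpha)$, then pick $\beta < \kappa$ with $f(\beta) \geq s$; any $\gamma \geq \beta$ has $f(\gamma) \geq s$, so the images in $X(f(\alpha))$ stabilize). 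Since the map $\rho$ is natural and compatible with this restriction, $\coker(\rho)$ for the $\lambda$-diagram is identified with $\coker(\rho)$ for the $\kappa$-diagram $X'$. So it suffices to prove the statement when $\cT = \kappa$ is a regular cardinal.

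Now split on $\kappa$. If $\kappa = \aleph_0$, then $\cT$ is a countable directed poset and $X'$ satisfies Mittag-Leffler, so Theorem \ref{t:main alg com} applies directly and $\coker(\rho)$ is algebraically compact. If $\kappa \geq \aleph_1$, then $\kappa$ is an $\aleph_1$-directed poset: any countable subset $S \subseteq \kappa$ is bounded above in $\kappa$ precisely because $\kappa$ has uncountable cofinality (indeed $\sup S < \kappa$). Hence Theorem \ref{t:main a1 directed} applies and $\rho$ is an isomorphism, so $\coker(\rho) = 0$, which is trivially algebraically compact. In both cases $\coker(\rho)$ is algebraically compact, hence cotorsion, completing the argument.

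The only genuinely delicate point is the cofinality reduction in the first paragraph — specifically, verifying carefully that (a) the natural map $\rho$ for the $\lambda$-diagram and the natural map $\rho$ for the restricted $\kappa$-diagram have canonically isomorphic cokernels, and (b) the Mittag-Leffler condition is preserved under the restriction. Part (a) is really two compatible cofinality statements, one for the limit of pointed sets inside $L_A(\lim_\lambda X)$ and one for the limit of abelian groups $\lim_\lambda L_A(X)$, together with the fact that $L_A$ commutes with the relevant identification on the source; since $L_A$ is a left adjoint it commutes with colimits but here the relevant statement is about limits, so the identification on the source side comes from the underlying-pointed-set cofinality isomorphism $\lim_\lambda X \cong \lim_\kappa X'$, to which $L_A$ is then applied. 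Part (b) is the short argument sketched above. Neither presents a real obstacle; this corollary is essentially a bookkeeping consequence of the two main theorems once the cofinality dichotomy for ordinals is invoked.
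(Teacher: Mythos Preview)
Your proposal is correct and follows essentially the same route as the paper: split on $\cf(\lambda)$, invoke Theorem~\ref{t:main alg com} via a cofinal $\NN\to\lambda$ when $\cf(\lambda)\leq\aleph_0$, and invoke Theorem~\ref{t:main a1 directed} when $\cf(\lambda)\geq\aleph_1$. The only cosmetic difference is that in the uncountable-cofinality case the paper observes directly that $\lambda$ itself is $\aleph_1$-directed, rather than first restricting to a cofinal copy of $\kappa$; your extra care about Mittag--Leffler preservation and the identification of cokernels is fine but not strictly needed for that branch.
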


Given Theorems \ref{t:main alg com} and \ref{t:main ord alg com}, it is natural to ask whether they remain true for any directed poset $\cT$.
 The answer depends on $A$; If $A$ is almost uniquely divisible, this follows from Theorem \ref{t:p divisible gen0}, regardless of the Mittag-Leffler condition. We also show:

\begin{thm}[Theorem \ref{t:counter}]
If $A$ is not almost divisible, then there exists a diagram $X$, with $|\cT|=2^{\aleph_0}$, that satisfies the Mittag-Leffler condition, such that the cokernel of $\rho$ is not cotorsion.
\end{thm}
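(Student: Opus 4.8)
\emph{Plan of proof.} The idea is to realize $\coker(\rho)$ as a reduced product $\prod_B M/\bigoplus_B M$ with $M$ a countable direct sum of copies of $A$ and $|B|=2^{\aleph_0}$, and then to show such a group fails to be cotorsion exactly because $A$ is not almost divisible. Concretely, fix a set $B$ with $|B|=2^{\aleph_0}$ and let $\cT=\bigl([B]^{<\omega},\subseteq\bigr)$ be the poset of finite subsets of $B$: it is directed, has cardinality $2^{\aleph_0}$, and is not $\aleph_1$-directed (as it must be, by Theorem~\ref{t:main a1 directed}, since we want $\rho$ non-surjective). I would set $X_F:=\{*\}\sqcup(F\times\NN)$ for $F\in\cT$, with connecting map $X_G\to X_F$ (for $F\subseteq G$) sending $(b,i)\mapsto(b,i)$ when $b\in F$ and $(b,i)\mapsto *$ when $b\in G\setminus F$. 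These maps are all surjective, so $X$ automatically satisfies the Mittag-Leffler condition.

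Next I would compute $\coker(\rho)$. A thread of $X$ is either constant at $*$ or, for a unique pair $(b,i)\in B\times\NN$, equal to $F\mapsto(b,i)$ when $b\in F$ and $F\mapsto *$ otherwise; hence $\lim_{\cT}X\cong\{*\}\sqcup(B\times\NN)$ and $L_A(\lim_{\cT}X)\cong\bigoplus_{b\in B}(A^{(\NN)})$, writing $A^{(\NN)}=\bigoplus_\NN A$. On the other side, an element of $\lim_{\cT}L_A(X)=\lim_{F}\bigl(\bigoplus_{F\times\NN}A\bigr)$ is precisely a function $B\times\NN\to A$ whose restriction to each slice $\{b\}\times\NN$ has finite support, i.e.\ an element of $\prod_{b\in B}(A^{(\NN)})$, and under these identifications $\rho$ is the canonical inclusion. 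Thus $\coker(\rho)\cong\prod_B(A^{(\NN)})/\bigoplus_B(A^{(\NN)})$, consistently with Theorem~\ref{t:inj}.

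The core of the argument is to show this group is not cotorsion. Write $A=D(A)\oplus R$ with $R$ reduced; since $A$ is not almost divisible, $R$ is unbounded. Then $\coker(\rho)$ splits as a divisible summand plus $N:=\prod_B M/\bigoplus_B M$ with $M:=R^{(\NN)}$, so it suffices to prove $N$ is not cotorsion. I would use that $N$ is cotorsion iff $N\to\Ext(\QQ/\ZZ,N)=\prod_p\Ext(\ZZ(p^\infty),N)$ is onto, together with the canonical surjection $\Ext(\ZZ(p^\infty),N)\twoheadrightarrow\widehat N_p:=\varprojlim_n N/p^nN$ (compatible with the maps out of $N$) and the isomorphism $N/p^nN\cong\prod_B(M/p^nM)/\bigoplus_B(M/p^nM)$. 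The plan is: choose $c=(c^{(p)})_p\in\prod_p\widehat M_p$, where $\widehat M_p=\varprojlim_n M/p^nM$, which is \emph{not} in the image of the canonical map $j\colon M\to\prod_p\widehat M_p$; for each $p$ let $\bar c^{(p)}\in\widehat N_p$ be the coherent sequence whose $n$-th term is the class of the constant function $B\to M/p^nM$ with value $c^{(p)}\bmod p^nM$; then show $\bar c=(\bar c^{(p)})_p$ is not hit by $N$. Indeed, if $\bar c$ were the image of $\bar\eta\in N$, lift $\bar\eta$ to $\eta\in M^B$: the defining conditions force each $E_{p,n}:=\{\alpha\in B:\eta(\alpha)\not\equiv c^{(p)}\pmod{p^nM}\}$ to be finite, so $\bigcup_{p,n}E_{p,n}$ is countable; as $|B|=2^{\aleph_0}>\aleph_0$, some $\alpha_0$ lies outside it, and then $j(\eta(\alpha_0))=c$, contradicting the choice of $c$. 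Hence $N$, and so $\coker(\rho)$, is not cotorsion.

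The main obstacle is the one remaining ingredient: producing $c\notin j(M)$, which is exactly where "$A$ not almost divisible", i.e.\ "$R$ reduced and unbounded", is used. The idea is to put a nonzero value in every coordinate $i\in\NN$ of some $c^{(p)}$ while keeping, for each fixed prime $p$, the coordinates $c^{(p)}_i$ $p$-adically convergent to $0$ in $\widehat R_p$; any $m\in M=R^{(\NN)}$ mapping to $c$ would then have nonzero entries in all coordinates, impossible in a direct sum. Such a $c$ exists because a reduced unbounded $R$ satisfies a dichotomy: either some $p$-adic completion $\widehat R_p=\varprojlim_n R/p^nR$ is unbounded — then place a nonzero element of $p^i\widehat R_p$ in coordinate $i$, all at this single prime — or $R/pR\ne 0$ for infinitely many primes $p_0,p_1,\dots$ — then place a nonzero element of $\widehat R_{p_i}$ in coordinate $i$, one prime per coordinate. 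The delicate point is proving this dichotomy: if only finitely many $\widehat R_p$ were nonzero, each bounded, one checks that the torsion subgroup of $R$ is then bounded and the torsion-free quotient divisible (using a short localization/Bézout argument), whence it splits off and $R$ is almost divisible — contrary to hypothesis. Once the dichotomy is in hand, $c$ is constructed and the previous paragraph completes the proof.
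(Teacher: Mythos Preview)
Your construction of $X$ is essentially the same as the paper's (finite subsets of a set of size $2^{\aleph_0}$, with $X_F=(F\times\NN)_+$), and your explicit identification $\coker(\rho)\cong\prod_B A^{(\NN)}/\bigoplus_B A^{(\NN)}$ is correct and useful. The argument is sound throughout: the $\Ext$/completion criterion is valid (the map $N\to\Ext(\QQ/\ZZ,N)$ is surjective iff $N$ is cotorsion, and $\Ext(\ZZ(p^\infty),N)\twoheadrightarrow\widehat N_p$ follows from the Milnor sequence), the computation $N/p^nN\cong\prod_B(M/p^nM)/\bigoplus_B(M/p^nM)$ is right, the countable-union trick to find $\alpha_0$ works, and your dichotomy for reduced unbounded $R$ is exactly ``$R$ not almost divisible'' rephrased via $p$-lengths (note that $\widehat R_p$ bounded forces $l_p(R)<\omega$, since $\widehat R_p\to R/p^nR$ is surjective). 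The splitting argument for $R=mR\oplus T(R)$ when $T(R)$ is bounded and $R/T(R)$ divisible also goes through.

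Where you diverge from the paper is in \emph{how} you prove the reduced product is not cotorsion. The paper does not compute $\coker(\rho)$ explicitly; instead it proves a general criterion (Proposition~\ref{p:not cotorsion}) which says that if $A$ is not almost divisible and the diagram is flexible enough that for any sequence $(k_n)$ one can find an index $s$ with $|g_n(s)|=k_n$, then $H(X,A)$ is not cotorsion. The paper then takes $B=\NN^\NN$ precisely so that singletons $\{k\}\in\cT$ realize every sequence $(k_n)$, and verifies the criterion directly. Your route trades this for the explicit description of the cokernel plus completion arguments; it is arguably more transparent for this specific diagram and shows that any uncountable $B$ would do, whereas the paper's proof genuinely uses $B=\NN^\NN$. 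On the other hand, the paper's Proposition~\ref{p:not cotorsion} is a reusable tool that could apply to other diagrams where the cokernel is not so cleanly computable.
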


Almost divisible groups are groups of the form
$$(\bigoplus_{\alpha<\lambda_0} \QQ )\oplus\bigoplus_{p\in \PP}\bigoplus_{\alpha<\lambda_p}\ZZ(p^\infty)\oplus B,$$
with $(\lambda_p)_{p\in\PP\cup \{0\}}$ cardinals and $B$ bounded, and almost uniquely divisible groups are such groups with $\lambda_p=0$ for every $p\in\PP$.
Thus, the following question remains, which we still didn't answer:
\begin{ques}\label{q:Zpinf}
  Suppose $A=\ZZ(p^\infty)$ for some prime $p$. Is the cokernel of $\rho$ cotorsion for every diagram $X$ that satisfies the Mittag-Leffler condition?
\end{ques}

Our results show that the Mittag-Leffler condition loses its relevance for our purposes when $|\cT|\geq 2^{\aleph_0}$, in the sense that it doesn't allow us to prove a stronger result than we would otherwise have (see still Question \ref{q:Zpinf}). In order to prove a stronger result we need to impose a stronger condition on uncountable diagrams $X$ than the Mittag-Leffler condition. Namely, we assume that $X$ is composed of finite sets. We show

\begin{thm}[see Theorems \ref{t:p divisible}]\label{t:cotorsion0}
If $X$ is composed of finite sets, we have the following:
\begin{enumerate}
  \item If $A$ is $p$-divisible then the cokernel of $\rho$ is $p$-divisible.
  \item If $A$ is divisible then the cokernel of $\rho$ is divisible.
\item If $A$ is cotorsion then the cokernel of $\rho$ is cotorsion.
\end{enumerate}
\end{thm}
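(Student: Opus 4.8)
The plan is to show that $\lim_{t\in\cT}L_A(X_t)$ is, as an abstract group, a direct product of copies of $A$. Since $\coker(\rho)$ is by construction a quotient of $\lim_{t\in\cT}L_A(X_t)$, all three assertions then follow at once from the facts that $p$-divisibility, divisibility and the cotorsion property are each inherited by arbitrary direct powers and by quotient groups.

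Write $L_\ZZ$ for the functor attached to $A=\ZZ$, so that $L_A(Y)=L_\ZZ(Y)\otimes_\ZZ A$ for every pointed set $Y$. When $Y$ is \emph{finite} the group $L_\ZZ(Y)$ is finitely generated free, so the evaluation map is a natural isomorphism $L_A(Y)=L_\ZZ(Y)\otimes_\ZZ A\cong\Hom_\ZZ\bigl(L_\ZZ(Y)^{\vee},A\bigr)$, where $(-)^{\vee}:=\Hom_\ZZ(-,\ZZ)$. Put $N_t:=L_\ZZ(X_t)^{\vee}$; a map $X_t\to X_s$ (for $t\ge s$) induces $N_s\to N_t$, so $(N_t)_{t\in\cT}$ is a direct (inductive) system indexed by $\cT$, and the above isomorphisms assemble into an isomorphism of inverse systems $L_A\circ X\cong\Hom_\ZZ(N_{\bullet},A)$. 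As $\Hom_\ZZ(-,A)$ converts colimits into limits, this gives
$$\lim_{t\in\cT}L_A(X_t)\;\cong\;\Hom_\ZZ\bigl(M_\infty,A\bigr),\qquad M_\infty\;:=\;\colim_{t\in\cT}L_\ZZ(X_t)^{\vee}.$$

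The crucial point is that $M_\infty$ is a \emph{free} abelian group. One identifies $N_t$ with $\{f\colon X_t\to\ZZ\mid f(*)=0\}$, the transition map $N_s\to N_t$ being pullback along $X_t\to X_s$; since $X$ is a diagram of finite pointed sets it satisfies the Mittag--Leffler condition, and using this one checks that the natural map $M_\infty\to\ZZ^{S}$, where $S:=\lim_{t\in\cT}X_t$, is injective with image
$$C(S,\ZZ)_{*}\;:=\;\{\,f\colon S\to\ZZ \text{ locally constant},\ f(*)=0\,\}.$$
(Injectivity is where Mittag--Leffler enters: an element of $N_t$ dies in $M_\infty$ as soon as it vanishes on some $\im(X_{t'}\to X_t)$, while it maps to $0$ in $\ZZ^{S}$ iff it vanishes on $\bigcap_{r\ge t}\im(X_r\to X_t)$, which by Mittag--Leffler equals $\im(X_{t'}\to X_t)$ for a suitable $t'$. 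Surjectivity onto $C(S,\ZZ)_{*}$ uses that every locally constant function on the compact space $S$ factors through some projection $S\to X_t$.) Now $C(S,\ZZ)_{*}$ is a direct summand of the group $C(S,\ZZ)$ of all locally constant maps $S\to\ZZ$ (split off by evaluation at $*$ and the constant map $1$), and $C(S,\ZZ)$ is generated, inside $\ZZ^{S}$, by the characteristic functions of the members of the field of clopen subsets of $S$; hence by N\"obeling's theorem $C(S,\ZZ)$, and therefore its direct summand $M_\infty$, is free. Choosing a basis $M_\infty\cong\bigoplus_{j\in J}\ZZ$, one obtains
$$\lim_{t\in\cT}L_A(X_t)\;\cong\;\Hom_\ZZ\Bigl(\bigoplus_{j\in J}\ZZ,\ A\Bigr)\;=\;\prod_{j\in J}A .$$

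It then remains to read off the three statements. The group $\coker(\rho)=\bigl(\lim_{t\in\cT}L_A(X_t)\bigr)/\im(\rho)$ is a quotient of $\prod_{j\in J}A$. If $A$ is $p$-divisible, resp.\ divisible, then so is $\prod_{j\in J}A$, and a quotient of a $p$-divisible, resp.\ divisible, group again has that property; this is (1) and (2). If $A$ is cotorsion then $\Ext(\QQ,\prod_{j\in J}A)=\prod_{j\in J}\Ext(\QQ,A)=0$, and applying $\Ext(\QQ,-)$ to $0\to\im(\rho)\to\prod_{j\in J}A\to\coker(\rho)\to 0$ together with the fact that $\ZZ$ has global dimension $1$ gives $\Ext(\QQ,\coker(\rho))=0$, i.e.\ $\coker(\rho)$ is cotorsion; this is (3). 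The only step carrying real content is the identification $M_\infty\cong C(S,\ZZ)_{*}$ followed by the appeal to N\"obeling's theorem, and I expect the Mittag--Leffler bookkeeping inside that identification to be the one place demanding genuine care; everything else is formal.
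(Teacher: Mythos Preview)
Your proof is correct and takes a genuinely different route from the paper. The paper argues case by case: for (1) it picks arbitrary levelwise $p$-th roots $g(s)$ of a given $f\in G(X,A)$ and then seeks compatible correction terms $h(s)\in(X(s)\wedge A)[p]$; these satisfy an infinite linear system over the bounded (hence algebraically compact) group $A[p]$, and directedness of $\cT$ solves every finite subsystem. For (3) it splits $A=D\oplus R$ into divisible and reduced parts, handles $D$ via (2), and for $R$ notes that each $X(s)\wedge R\cong R^{X(s)\setminus\{*\}}$ is a finite product of reduced cotorsion groups, so that the inverse limit $G(X,R)$ is reduced cotorsion by a result from Fuchs. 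By contrast, you establish the single structural identification $G(X,A)\cong\Hom_\ZZ(M_\infty,A)\cong\prod_J A$ via N\"obeling's theorem, from which all three items drop out uniformly. Your argument is more conceptual and actually yields a stronger conclusion (it pins down $G(X,A)$ up to isomorphism), at the price of invoking N\"obeling; the paper's proof is more self-contained but treats the three cases by separate ad hoc devices. The Mittag--Leffler step you flag---that the image of $\lim_\cT X$ in each $X_t$ equals the stable image $\bigcap_{r\ge t}\im(X_r\to X_t)$---does hold for an arbitrary directed $\cT$ by the standard compactness argument for inverse limits of finite sets, so the identification $M_\infty\cong C(S,\ZZ)_*$ goes through as you describe.
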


Part (3) of Theorem \ref{t:cotorsion0} is best possible in the sense that we show

\begin{thm}[see Theorem \ref{t:finite counter}]\label{t:finite counter0}
   If $A$ is not cotorsion, then for every cardinal $\lambda>\aleph_0$ there exists a diagram $X$, with $|\cT|=\lambda$, composed of finite sets, such that the cokernel of $\rho$ is not cotorsion.
\end{thm}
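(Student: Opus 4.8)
The plan is to fix an uncountable cardinal $\lambda$ and build $\cT$ and $X$ by hand. I would first put the hypothesis to work. Fix integers $m_n\ge 2$ with $\colim\bigl(\ZZ\xrightarrow{\times m_1}\ZZ\xrightarrow{\times m_2}\cdots\bigr)\cong\QQ$ (e.g.\ $m_n=n+1$); then $\Ext(\QQ,A)\cong\varprojlim^1_n\bigl(A\xleftarrow{\times m_1}A\xleftarrow{\times m_2}A\xleftarrow{\times m_3}\cdots\bigr)$, so ``$A$ not cotorsion'' supplies an element $c=(c_n)_n\in\prod_n A$ whose class in this $\varprojlim^1$ is nonzero; equivalently, the partial sums $b_k=\sum_{j\le k}(m_1\cdots m_{j-1})c_j$ form a Cauchy sequence for the filtration $\bigl((m_1\cdots m_k)A\bigr)_k$ of $A$ with no limit in $A$. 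This ``runaway Cauchy sequence'' is the engine of the construction. Note that, by Theorem~\ref{t:main a1 directed}, the poset we build cannot be $\aleph_1$-directed, and by Theorem~\ref{t:main ord alg com} it cannot be cofinal in an ordinal; so it must be of ``finite-subset'' type.

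For the construction I would take $\cT$ to be the set of finite partial functions $\sigma\colon\lambda\rightharpoonup\NN$, ordered by $\sigma\le\tau$ iff $\mathrm{dom}(\sigma)\subseteq\mathrm{dom}(\tau)$ and $\sigma(\alpha)\le\tau(\alpha)$ for all $\alpha\in\mathrm{dom}(\sigma)$; this is directed, has cardinality $\lambda$, is not $\aleph_1$-directed, and is not an ordinal. Put $X_\sigma=\bigvee_{\alpha\in\mathrm{dom}(\sigma)}W_{\sigma(\alpha)}$, a wedge of finite pointed sets, where $W_n\smallsetminus\{*\}$ is the $n$-th level of the rooted tree in which every node at level $j-1$ has exactly $m_j$ children, so $|W_n\smallsetminus\{*\}|=m_1\cdots m_n$; the structure map $X_\tau\to X_\sigma$ for $\sigma\le\tau$ is, on the $\alpha$-summand with $\alpha\in\mathrm{dom}(\sigma)$, the tree collapse $W_{\tau(\alpha)}\to W_{\sigma(\alpha)}$ sending each node to its level-$\sigma(\alpha)$ ancestor, and is the collapse to $*$ on the summands indexed by $\mathrm{dom}(\tau)\smallsetminus\mathrm{dom}(\sigma)$. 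Because $L_A$ sends a finite pointed set to a finite direct sum and sends a fibre of a map of pointed sets to the corresponding ``fold'' homomorphism, each tower $\{L_A(W_n)\}_n$ is a copy of $\bigl(A\xleftarrow{\times m_1}A\xleftarrow{\times m_2}\cdots\bigr)$ ``smeared over the tree'' — which is where $c$ will enter.

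Next I would compute the three terms. A non-basepoint thread of $X$ is supported in a single coordinate $\alpha$ and amounts to a pair $(\alpha,\omega)$ with $\omega$ an infinite branch of the tree, so $\lim_\cT X\smallsetminus\{*\}\cong\lambda\times K$ with $K=\prod_{n\ge 1}\{1,\dots,m_n\}$ a Cantor-type space, whence $L_A(\lim_\cT X)=\bigoplus_{\lambda\times K}A$. Since all domains are finite, a thread of $L_A\circ X$ is an arbitrary $\lambda$-indexed family of finitely additive $A$-valued ``measures'' on $K$, i.e.\ $\lim_\cT L_A(X)\cong\prod_{\alpha<\lambda}\cM$ with $\cM:=\varprojlim_n L_A(W_n)$; and $\rho$, injective by Theorem~\ref{t:inj}, sends the basis element at $(\alpha,\omega)$ to the Dirac measure $\delta_\omega$ placed in coordinate $\alpha$. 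Hence
\[
\coker(\rho)\ \cong\ \Bigl(\textstyle\prod_{\alpha<\lambda}\cM\Bigr)\Big/\Bigl(\textstyle\bigoplus_{\alpha<\lambda}\bigoplus_K A\Bigr),
\]
and this sits in a short exact sequence $0\to\bigoplus_\lambda C\to\coker(\rho)\to D\to 0$ with $C:=\cM/\bigoplus_K A$ the cokernel of $\rho$ for the countable ``Cantor'' diagram and $D:=\prod_\lambda\cM/\bigoplus_\lambda\cM$. By Theorem~\ref{t:main alg com} the group $C$ is algebraically compact, and $D$ is algebraically compact because it has the form $\prod/\bigoplus$ (Hulanicki).

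The crux, which I expect to be the main obstacle, is to show $\coker(\rho)$ is not cotorsion, and it has two halves. First, that $C$ is not almost divisible: using the runaway sequence $(b_k)$ one locates inside $C$ a reduced, unbounded (algebraically compact) subgroup — morally the $(m_1m_2\cdots)$-adic completion of $\ZZ\cdot c$ modulo its image — and this is exactly where non-cotorsion of $A$ is needed (the reverse implication, $A$ almost divisible $\Rightarrow$ $C$ almost divisible, being Theorem~\ref{c:almost divisible0}). It follows that $\bigoplus_\lambda C$ is not cotorsion, since a direct sum of infinitely many copies of a non-almost-divisible group is never cotorsion. Second — the harder half — that $\coker(\rho)$ itself is not cotorsion: the displayed sequence is typically non-split (a direct sum is not a direct summand of the corresponding product), and the obvious quotients $D$ and $\prod_\lambda C$ of $\coker(\rho)$ are both algebraically compact, so one cannot conclude cheaply; instead one must analyze $\Ext(\QQ,-)$ of $0\to\bigoplus_\lambda C\to\coker(\rho)\to D\to 0$ and prove that the connecting map $\Hom(\QQ,D)\to\Ext(\QQ,\bigoplus_\lambda C)$ is not surjective. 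Both groups are $\QQ$-vector spaces and $D$ is algebraically compact, so this is a comparison of ``sizes'' in which the failure of $\aleph_1$-directedness of $\cT$ is precisely what obstructs surjectivity — for an ordinal index or for a diagram built from independent countable pieces the map \emph{is} onto, consistently with Theorem~\ref{t:main ord alg com}. I would expect the clean version of this last point to come either from a refined choice of $X$ making $\coker(\rho)$ carry $\bigoplus_\lambda C$ as a genuine retract, or from directly exhibiting a non-split extension of $\coker(\rho)$ by $\QQ$ built from $c$ together with the branching of $\cT$. Finally, for a general uncountable $\lambda$ one runs the construction over $\lambda$ itself, or takes the $\aleph_1$-version and adjoins $\lambda$ inert extra coordinates.
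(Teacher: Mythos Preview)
Your proposal has a genuine gap and is far more elaborate than necessary. You build a tree-indexed diagram over finite partial functions $\lambda\rightharpoonup\NN$ and arrive at a short exact sequence $0\to\bigoplus_\lambda C\to\coker(\rho)\to D\to 0$, but you do not complete the decisive step of showing $\coker(\rho)$ is not cotorsion. As you yourself observe, $D$ is algebraically compact, so from the long exact sequence for $\Ext(\QQ,-)$ you would need the connecting map $\Hom(\QQ,D)\to\Ext(\QQ,\bigoplus_\lambda C)$ to fail to be surjective; you only speculate that this should follow from a ``comparison of sizes'' or from a refined choice of $X$, and neither route is carried out. The auxiliary claim that $C$ is not almost divisible whenever $A$ is not cotorsion is likewise asserted without proof and is not obvious.

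The paper's argument avoids all of this. It takes $\cT$ to be the finite nonempty subsets of $\lambda$ ordered by inclusion and $X(s)=s_+$, with the evident ``identity on $s$, collapse the rest to $*$'' structure maps (Example~\ref{e:finite}). Then one computes directly that $\coker(\rho)\cong\prod_\lambda A\big/\bigoplus_\lambda A$. Since $A$ is not cotorsion, there is a sequence $(a_n)_{n\in\NN}$ in $A$ for which the system $x_n-(n{+}1)x_{n+1}=a_n$ has no solution in $A$. Put $g_n:=(a_n)_{\alpha<\lambda}\in\prod_\lambda A$ and consider the corresponding system in $\coker(\rho)$ with right-hand sides $[g_n]$. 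If $([h_n])$ were a solution, then for each $n$ the set $u_n:=\{\alpha:h_n(\alpha)-(n{+}1)h_{n+1}(\alpha)\neq a_n\}$ is finite; since $\lambda>\aleph_0$, some $\beta$ lies outside $\bigcup_n u_n$, and then $(h_n(\beta))_n$ solves the original system in $A$, a contradiction. No tree structures, no analysis of $\Ext$, no structure theory of algebraically compact groups is needed --- only the elementary cotorsion criterion and the pigeonhole observation that countably many finite sets cannot cover an uncountable $\lambda$.
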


\begin{rem}
  We have an adjoint pair
  $$(-)_+:\Set\adj\Set_*:U,$$
  where $U$ is the forgetful functor and $(-)_+$ adds a disjoint basepoint. Note that $(-)_+$ is faithful and essentially surjective. For every $Y\in\Set$ we have a natural isomorphism
  $$L_A(Y_+)=Y_+\wedge A\cong Y\otimes A.$$
  Thus, if $Z:\cT\to\Set$ is a diagram of sets, the natural map
  $$({\lim}_\cT Z)\otimes A\lrar{\lim}_\cT (Z\otimes A)$$
  is naturally isomorphic to the natural map
  $$L_A({\lim}_\cT (Z_+))\cong L_A(({\lim}_\cT Z)_+)\lrar{\lim}_\cT L_A(Z_+).$$
  It follows that all the theorems above remain true if we replace $\Set_*$ by $\Set$ and $L_A$ by a functor of the form $(-)\otimes A$.
\end{rem}

\begin{rem}
  A different kind of extension of our results in \cite{BaSh} appears in \cite{Akh}. Note that the abelianization functor is just the first homology functor (with integer coefficients) on the category of groups. Akhtiamov studies the behaviour of higher homology functors on the category of groups with respect to inverse limits.
\end{rem}

\section{Almost divisible groups}
The main purpose of this section is to introduce the notion of an \emph{almost divisible group} and to study some equivalent definitions of it. Throughout this section, we let $A$ be an abelian group.

Recall that the abelian group $A$ is called (uniquely) divisible, if for every $a\in A$ and $n\in\NN$ there exists (a unique) $b\in A$ such that $a=nb$. If $p$ is a prime then $A$ is called (uniquely) $p$-divisible, if for every $a\in A$ there exists (a unique) $b\in A$ such that $a=pb$.

Recall from \cite[page 154]{Fu1} that if $p$ is a prime, the $p$-length of $A$, denoted $l_p(A)$, is defined to be the smallest ordinal $\lambda$ for which $p^\lambda A$ is $p$-divisible. Note that $A$ is divisible iff for every prime $p$ we have $l_p(A)=0$. This motivates the following:

\begin{define}
   The group $A$ is called \emph{almost divisible} if for almost every prime $p$ we have $l_p(A)=0$ and for every prime $p$ we have $l_p(A)<\omega$.
\end{define}
\begin{lem}\label{l:lp}
  If $p\neq q$ are primes and $l_p(A)<\omega$ then $l_p(A)=l_p(qA)$.
\end{lem}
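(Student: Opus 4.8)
The plan is to compare the $p$-Ulm filtrations of $A$ and $qA$ directly, using the short exact sequence
\[
0 \longrightarrow A[q] \longrightarrow A \xrightarrow{\ q\ } qA \longrightarrow 0 ,
\]
where $A[q]=\{a\in A: qa=0\}$. Since $\gcd(p,q)=1$, pick $u,v\in\ZZ$ with $up+vq=1$; then for $a\in A[q]$ we have $a=upa=p(ua)$ with $ua\in A[q]$, so $A[q]$ is (uniquely) $p$-divisible. Consequently $p^\mu(A[q])=A[q]$ for every ordinal $\mu$, and hence $A[q]\subseteq p^\mu A$ for every $\mu$ — that is, $A[q]$ is invisible to the $p$-filtration of $A$.

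Next I would record two elementary closure properties: (i) a quotient of a $p$-divisible group is $p$-divisible; (ii) if $0\to K\to E\to Q\to 0$ is exact with $K$ and $Q$ both $p$-divisible, then $E$ is $p$-divisible (apply the snake lemma to multiplication by $p$ on this sequence: $\mathrm{coker}(p\colon K\to K)=0$ and $\mathrm{coker}(p\colon Q\to Q)=0$ force $\mathrm{coker}(p\colon E\to E)=0$). With these in hand, I would prove by transfinite induction on $\mu$ that, under the identification $qA\cong A/A[q]$, one has $p^\mu(qA)=p^\mu A/A[q]$. The successor step uses $A[q]=pA[q]\subseteq p(p^\mu A)=p^{\mu+1}A$; the limit step uses that, because every $p^\mu A$ contains $A[q]$, the quotient map commutes with the intersection, so $\bigcap_{\mu<\lambda}(p^\mu A/A[q])=(\bigcap_{\mu<\lambda}p^\mu A)/A[q]=p^\lambda A/A[q]$.

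Finally, combining this with (i) and (ii): $p^\mu(qA)=p^\mu A/A[q]$ is $p$-divisible if and only if $p^\mu A$ is $p$-divisible — "if" because $p^\mu(qA)$ is then a quotient of $p^\mu A$, and "only if" because $p^\mu A$ is an extension of the $p$-divisible group $p^\mu(qA)$ by the $p$-divisible group $A[q]$. Hence the least ordinal $\mu$ with $p^\mu A$ $p$-divisible equals the least $\mu$ with $p^\mu(qA)$ $p$-divisible, i.e. $l_p(A)=l_p(qA)$. (This argument does not actually use $l_p(A)<\omega$; if one prefers to stay in the finite range, observe directly that $p^n(qA)=q(p^nA)$ for $n<\omega$, deduce $l_p(qA)\le l_p(A)$ from (i), and get the reverse inequality from (ii) applied to $B=p^mA$ with $m=l_p(qA)$.) The only mildly delicate point is the limit-ordinal step of the transfinite induction, and it is resolved precisely by the observation, made above, that the $p$-divisible subgroup $A[q]$ lies inside every term of the $p$-Ulm filtration of $A$.
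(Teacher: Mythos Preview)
Your proof is correct, and in fact slightly stronger than what the paper proves: the paper's argument genuinely uses the hypothesis $l_p(A)<\omega$ (it writes an element of $p^{l_p(qA)}A$ as $p^{l_p(qA)}c$, which is only valid at finite stages), whereas your transfinite identification $p^\mu(qA)=p^\mu A/A[q]$ yields $l_p(A)=l_p(qA)$ for arbitrary ordinal length.

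The two approaches are genuinely different. The paper argues directly and elementarily: the inequality $l_p(qA)\le l_p(A)$ is immediate from commutativity of multiplication, and for the reverse inequality it takes $a\in p^{l_p(qA)}A$, observes $qa\in p^{l_p(qA)+1}A$, and then uses a B\'ezout relation $up+vq=1$ to push $a$ itself into $p^{l_p(qA)+1}A$. You instead pass through the short exact sequence $0\to A[q]\to A\to qA\to 0$, note that $A[q]$ is $p$-divisible (same B\'ezout trick, applied at a different point), and deduce that the entire $p$-Ulm filtration of $qA$ is the image of that of $A$. Your route is more structural and buys the transfinite generality for free; the paper's route avoids any quotient or extension bookkeeping and is a touch more self-contained for the finite case actually needed later. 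Either is perfectly adequate here.
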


\begin{proof}
  We have $p^{l_p(A)}A=p^{l_p(A)+1}A$ so we have $$p^{l_p(A)}(qA)=qp^{l_p(A)}A=qp^{l_p(A)+1}A=p^{l_p(A)+1}(qA),$$ which shows that $l_p(A)\geq l_p(qA)$.

  We are left to show that $l_p(A)\leq l_p(qA)$ or $p^{l_p(qA)}A\subseteq p^{l_p(qA)+1}A$. So let $a\in p^{l_p(qA)}A$. There exists $c\in A$ such that $a=p^{l_p(qA)}c$. Since $qc\in qA$ we have
  $$qa\in p^{l_p(qA)}(qA)=p^{l_p(qA)+1}(qA)\subseteq p^{l_p(qA)+1}A.$$
  Thus, there exists $b\in p^{l_p(qA)}A$ such that $qa=pb$. Since $p\neq q$ are primes, there exist $m,n\in\ZZ$ such that $mp+nq=1$. It follows that
  $$a=mpa+nqa=mpa+npb=p(ma+nb).$$
  But $a,b\in p^{l_p(qA)}A$ so $d:=ma+nb\in p^{l_p(qA)}A$ and $a=pd\in p^{l_p(qA)+1}A$.
\end{proof}

\begin{notat}
  If $q=(q_n)_{n\in\NN}$ is a sequence in $\ZZ$ we define for every $n\in\NN$
   $$q_{<n}:=\prod_{l<n}q_l.$$
\end{notat}

\begin{prop}\label{p:criterion}
   The group $A$ is almost divisible iff for every sequence $(q_n)_{n\in\NN}$ in $\ZZ$ there exists $m\in\NN$ such that for every $n>m$ we have
  $q_{<n} A=q_{<m} A.$
\end{prop}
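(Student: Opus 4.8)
The plan is to prove both directions of the biconditional in Proposition~\ref{p:criterion} by reducing everything to the behaviour of the $p$-length $l_p(A)$, prime by prime, and then repackaging the finitely-many-bad-primes condition into the single sequence $(q_n)$.

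\emph{From almost divisibility to the sequential condition.} Suppose $A$ is almost divisible, and let $(q_n)_{n\in\NN}$ be an arbitrary sequence in $\ZZ$. First I would dispose of trivial entries: if some $q_n=0$ then $q_{<m}A=0$ for all large $m$ and we are done, so assume all $q_n\neq 0$. Let $P$ be the (finite, by almost divisibility) set of primes $p$ with $l_p(A)>0$, together with the primes dividing $q_0$ --- wait, more carefully: the relevant set is the set of primes $p$ for which $p$ divides infinitely many $q_n$ \emph{and} $l_p(A)>0$; but since every prime dividing some $q_n$ need not be controlled, the cleaner route is to observe $q_{<n}A \supseteq q_{<n+1}A$ always, so the chain is decreasing, and I want it to stabilize. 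For a prime $p$ with $l_p(A)=0$ (all but finitely many), Lemma~\ref{l:lp} (iterated) shows multiplying by a $p'$-integer doesn't change the $p$-adic behaviour, and $p$-divisibility means multiplying by $p$ is surjective; so such primes never obstruct stabilization. For the finitely many primes $p$ with $0<l_p(A)<\omega$: if $p\mid q_n$ for infinitely many $n$, then eventually the $p$-part of $q_{<n}$ exceeds $p^{l_p(A)}$ and $p^{l_p(A)}A=p^{l_p(A)+1}A$ makes further $p$-multiplication stabilize; if $p\mid q_n$ for only finitely many $n$, again by Lemma~\ref{l:lp} those finitely many multiplications by $p$ are harmless in the limit. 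Combining these finitely many primes, choose $m$ past which the $p$-adic valuation of $q_{<n}/q_{<m}$ is irrelevant for each bad $p$; then $q_{<n}A=q_{<m}A$ for $n>m$. The bookkeeping here --- handling a prime that divides only finitely many $q_n$ versus infinitely many, uniformly over the finite bad set --- is the fiddly part, and I would isolate it as a short sublemma: for fixed prime $p$, if $l_p(A)<\omega$ then the chain $q_{<n}A$ is eventually constant ``in its $p$-part''.

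\emph{From the sequential condition to almost divisibility.} For the converse I argue by contraposition. If $A$ is not almost divisible, then either (i) there is a prime $p$ with $l_p(A)=\omega$ (i.e., $l_p(A)\geq\omega$, so the descending chain $p^k A$ never stabilizes), or (ii) there are infinitely many primes $p$ with $l_p(A)>0$. In case (i), take the constant sequence $q_n=p$; then $q_{<n}A=p^n A$ is a strictly decreasing chain, so no such $m$ exists. In case (ii), let $p_0<p_1<p_2<\cdots$ be infinitely many primes with $l_{p_k}(A)\geq 1$, and take $q_n=p_n$; then $q_{<n}A=p_0p_1\cdots p_{n-1}A$, and I claim this chain is strictly decreasing. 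Indeed $q_{<n+1}A=p_n\cdot q_{<n}A$, and if this equaled $q_{<n}A$ then $q_{<n}A$ would be $p_n$-divisible; but $q_{<n}A$ is obtained from $A$ by multiplying by the $p_n'$-number $q_{<n}$, so by Lemma~\ref{l:lp} (again iterated over the distinct primes $p_0,\dots,p_{n-1}$, each $\neq p_n$) it has the same $p_n$-length as $A$, namely $\geq 1 > 0$, contradicting $p_n$-divisibility. Hence the chain does not stabilize and the sequential condition fails.

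\emph{Main obstacle.} The conceptual content is light; the real work is the careful use of Lemma~\ref{l:lp} to show that multiplying $A$ by an integer prime to $p$ preserves $l_p$ whenever $l_p(A)<\omega$ --- this needs iteration over the prime factors of that integer, and in the forward direction one must also track the primes dividing $q_n$ for only finitely many $n$. I expect the cleanest writeup to first extract the single-prime statement ``$l_p(A)<\omega$ $\iff$ the chain $(q_{<n}A)$ is eventually $p$-stable for every sequence $(q_n)$'' and then assemble the global statement from it, using that ``almost divisible'' is exactly ``$l_p(A)<\omega$ for all $p$ and $=0$ for all but finitely many $p$.''
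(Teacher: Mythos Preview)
Your proposal is correct and follows essentially the same route as the paper's proof: both directions hinge on Lemma~\ref{l:lp}, the ``sequential $\Rightarrow$ almost divisible'' implication is handled via the constant sequence $q_n=p$ (to force $l_p(A)<\omega$) and the distinct-primes sequence $q_n=p_n$ (to rule out infinitely many bad primes), and the converse is obtained by stripping off the good primes (where $l_p(A)=0$) and tracking the exponents of the finitely many bad primes until they stabilize. The only cosmetic difference is that you package the first implication as a single contrapositive with two cases, while the paper proves each clause of ``almost divisible'' separately; the paper also spells out the ``fiddly bookkeeping'' you flag as an explicit inductive sublemma on the number of bad primes, which is exactly the clean writeup you anticipate.
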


\begin{proof}
  First suppose that the condition of the proposition holds. Let $p$ be a prime number. Define, for every $n\in\NN$, $q_n:=p$. By the condition of the proposition, there exists $m\in\NN$ such that for every $n>m$ we have
  $p^n A=p^{m}A.$ Thus $l_p(A)\leq m<\omega$.

  Now suppose (to derive a contradiction) that there exist infinitely many different primes $q_0,q_1,\dots$ such that for every $i\in\NN$ we have $l_{q_i}(A)>0$. By the condition of the proposition, there exists $m\in\NN$ such that for every $n>m$ we have
  $q_{<n} A=q_{<m} A.$
  In particular, we have
  $$q_mq_{<m} A=q_{<m+1} A=q_{<m} A,$$
  so $l_{q_m}(q_{<m} A)=0.$
  But according to Lemma \ref{l:lp} we have
  $$l_{q_m}(q_{<m} A)=l_{q_m}(A)>0,$$
  which is a contradiction. Thus $A$ is almost divisible.

  Now suppose that $A$ is almost divisible. Then there exist $n\in\NN$ and different primes $p_1,\dots,p_n$ such that $l_{p_i}(A)>0$ for $i=1,\dots,n$ and $l_{q}(A)=0$ for every prime $q\notin \{p_1,\dots,p_n\}$.

  Let $(k_m)_{m\in\NN}$ be a sequence in $\ZZ$. We need to show that there exists $m\in\NN$ such that for every $n>m$ we have
  $k_{<n} A=k_{<m} A.$
   If $k_m=0$ for some $m\in\NN$ this is trivial, so we can assume that $(k_m)_{m\in\NN}$ is a sequence in $\ZZ\setminus\{0\}$. We can thus choose recursively increasing sequences $(N_m)_{m\in\NN}$, $(\mu_{m,i})_{m\in\NN}$ ($i=1,\dots,n$) in $\NN$ and a sequence $(q_m)_{m\in\NN}$ in $\PP\setminus\{p_1,\dots,p_n\}$ such that for every $m\in\NN$ we have
  $$k_{<m}=\prod_{i=1}^n p_i^{\mu_{m,i}}q_1\cdots q_{N_m}.$$
  For every $q\in\PP\setminus\{p_1,\dots,p_n\}$ we have $l_q(A)=0$ so for every $m\in\NN$ we have
  $$k_{<m} A=(\prod_{i=1}^n p_i^{\mu_{m,i}})A.$$

  Let $i=1,\dots,n$. If $(\mu_{m,i})_{m\in\NN}$ is unbounded we choose $m_i\in\NN$ such that $\mu_{m_i,i}\geq l_{p_i}(A)$, otherwise we choose $m_i\in\NN$ such that
  $\mu_{m_i,i}=\max\{\mu_{m,i}:m\in\NN\}.$
  We now define $m:=\max\{m_1,\dots,m_n\}$.
  Let $t>m$.
\begin{lem}
     For every $0\leq j\leq n$ we have
  $$(\prod_{i=1}^j p_i^{\mu_{t,i}})A=(\prod_{i=1}^j p_i^{\mu_{m,i}})A.$$
\end{lem}

\begin{proof}
We prove the lemma by induction on $j$. When $j=0$ the lemma is clear. Now suppose we have proven the lemma for some $j<n$, and let us prove it for $j+1$. By Lemma \ref{l:lp}, we have
$$l_{p_{j+1}}((\prod_{i=1}^j p_i^{\mu_{t,i}})A)= l_{p_{j+1}}(A).$$

Suppose $l_{p_{j+1}}(A)\leq \mu_{m_{j+1},j+1}$. Then, since $\mu_{(-),j+1}$ is monotone increasing and using the induction hypothesis, we obtain
$$p_{j+1}^{\mu_{t,j+1}}(\prod_{i=1}^j p_i^{\mu_{t,i}})A= p_{j+1}^{\mu_{m,j+1}}(\prod_{i=1}^j p_i^{\mu_{t,i}})A=
p_{j+1}^{\mu_{m,j+1}}(\prod_{i=1}^j p_i^{\mu_{m,i}})A.$$
Otherwise, we have $\mu_{m,j+1}\leq\mu_{t,j+1}\leq \mu_{m_{j+1},j+1}\leq\mu_{m,j+1}$, so $\mu_{t,j+1}=\mu_{m,j+1}$ and the lemma follows easily using the induction hypothesis.
\end{proof}
Taking $j=n$ in the lemma above we obtain
  $$k_{<t} A=(\prod_{i=1}^n p_i^{\mu_{t,i}})A=(\prod_{i=1}^n p_i^{\mu_{m,i}})A=k_{<m} A,$$
which finishes our proof.
\end{proof}

\begin{prop}
   The group $A$ is almost divisible and reduced iff $A$ is bounded.
\end{prop}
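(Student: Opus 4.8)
The plan is to prove the two implications separately; only the forward direction requires real work, and it rests on showing that a suitable finite multiple of $A$ is divisible.

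For the direction \emph{bounded $\Rightarrow$ almost divisible and reduced}, I would start from $nA=0$ with $n=\prod_{i=1}^k p_i^{a_i}$, use the induced primary decomposition $A\cong\bigoplus_{i=1}^k A_{p_i}$ (with $p_i^{a_i}A_{p_i}=0$), and observe that multiplication by a prime $q\neq p_j$ is an automorphism of the bounded $p_j$-group $A_{p_j}$. This gives at once that $A$ is $q$-divisible for $q\notin\{p_1,\dots,p_k\}$, so $l_q(A)=0$, and that $p_i^{a_i}A=\bigoplus_{j\neq i}A_{p_j}$ is $p_i$-divisible, so $l_{p_i}(A)\le a_i<\omega$; hence $A$ is almost divisible. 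Reducedness is immediate, since a divisible subgroup $D\le A$ satisfies $D=nD\subseteq nA=0$.

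For the direction \emph{almost divisible and reduced $\Rightarrow$ bounded}, let $p_1,\dots,p_n$ be the finitely many primes with $k_i:=l_{p_i}(A)>0$ (and $l_q(A)=0$ for all other primes $q$), put $N:=\prod_{i=1}^n p_i^{k_i}$, and set $B:=NA$ (so $N=1$ and $B=A$ when $n=0$). The key claim is that $B$ is divisible, after which reducedness forces $B=0$, i.e.\ $NA=0$, which is exactly the assertion that $A$ is bounded. To prove the claim I would check $q$-divisibility of $B$ prime by prime: for $q=p_i$, the group $p_i^{k_i}A$ is $p_i$-divisible by definition of $l_{p_i}(A)$, and $B$ is its image under multiplication by $\prod_{j\neq i}p_j^{k_j}$ --- an endomorphism of $A$ --- so $B$ is $p_i$-divisible, because a homomorphic image of a $p_i$-divisible group is $p_i$-divisible (alternatively, iterate Lemma \ref{l:lp}); for $q\notin\{p_1,\dots,p_n\}$, the group $A$ itself is $q$-divisible because $l_q(A)=0$, hence so is its homomorphic image $B$.

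I do not anticipate a real obstacle here: the argument only uses the definition of $l_p$, of almost divisibility and of reducedness, the primary decomposition of bounded groups, and Lemma \ref{l:lp} (or the elementary stability of $p$-divisibility under homomorphic images). The only spots that want a moment's care are the degenerate case $n=0$ (where $A$ is divisible, hence $0$) and making sure divisibility of $B$ is verified at \emph{every} prime, including those dividing $N$. Proposition \ref{p:criterion} is not needed for this statement.
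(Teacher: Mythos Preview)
Your proof is correct and follows essentially the same route as the paper: for the forward direction you build the same integer $N=\prod p_i^{l_{p_i}(A)}$, show $NA$ is divisible (you phrase this via ``homomorphic image of a $p$-divisible group is $p$-divisible'', the paper via Lemma~\ref{l:lp}, which amounts to the same thing), and then use reducedness to get $NA=0$. The only notable difference is in the backward direction, where you invoke the primary decomposition of a bounded group directly, while the paper again routes through Lemma~\ref{l:lp}; both are short and your argument for reducedness ($D=nD\subseteq nA=0$) is a touch more explicit than the paper's.
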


\begin{proof}
  Suppose that $A$ is almost divisible and reduced. Then there exist $n\in\NN$ and different primes $p_1,\dots,p_n$ such that $l_{p_i}(A)>0$ for $i=1,\dots,n$ and $l_{q}(A)=0$ for every prime $q\notin \{p_1,\dots,p_n\}$. We define
  $$m:=\prod_{i=1}^{n}p_i^{l_{p_i}(A)}\in\NN\setminus\{0\}.$$

  Using Lemma \ref{l:lp}, we see that for every $i=1,\dots,n$ we have
$$l_{p_{i}}(mA)=l_{p_{i}}(p_i^{l_{p_i}(A)}A)=0$$
and for every $q\in\PP\setminus\{p_1,\dots,p_n\}$ we have
$$l_q(mA)=l_q(A)=0.$$
Thus, $mA$ is divisible. But $mA$ is a subgroup of $A$ and $A$ is reduced so $mA=\{0\}$, and $A$ is bounded.

Now suppose that $A$ is bounded, so there exists $m\in\NN\setminus\{0\}$ such that $mA=0$. By decomposing $A$ into its divisible and reduced parts, it is not hard to see that $A$ is reduced.

Let us write
$m=\prod_{i=1}^{n}p_i^{l_{p_i}},$
where $p_1,\dots,p_n$ are different primes.
Using Lemma \ref{l:lp}, we see that for every $i=1,\dots,n$ we have $l_{p_i}(p_i^{l_{p_i}}A)=l_{p_i}(mA)=0$ so $l_{p_i}(A)\leq l_{p_i}$
and for every $q\in\PP\setminus\{p_1,\dots,p_n\}$ we have
$$0=l_q(mA)=l_q(A).$$
Thus, $A$ is almost divisible.
\end{proof}

By decomposing an abelian group into its divisible and reduced parts we obtain the following:

\begin{cor}\label{c:AD is AC}
An abelian group is almost divisible iff it is the direct sum of a divisible group and a bounded group. Using \cite[Theorems 21.2 and 27.5]{Fu1} we see that an almost divisible abelian group is algebraically compact.
\end{cor}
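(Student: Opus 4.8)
The plan is to reduce to the previous proposition (an abelian group is almost divisible and reduced iff it is bounded) by means of the canonical splitting $A = D \oplus R$, with $D$ the maximal divisible subgroup and $R$ reduced, and then to read off algebraic compactness from the cited structural results.

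First I would record the elementary observation that for \emph{any} direct sum decomposition $A = D \oplus R$ with $D$ divisible, one has $l_p(A) = l_p(R)$ for every prime $p$. Indeed, $pD = D$ implies by transfinite induction that $p^\alpha D = D$ for every ordinal $\alpha$ (at a limit ordinal $\lambda$ we use $p^\lambda D = \bigcap_{\alpha<\lambda} p^\alpha D = D$), so $p^\alpha A = D \oplus p^\alpha R$, and hence $p^\alpha A$ is $p$-divisible iff $p^{\alpha+1} R = p^\alpha R$, i.e. iff $p^\alpha R$ is $p$-divisible. Taking the least such $\alpha$ on both sides gives $l_p(A) = l_p(R)$.

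For the forward direction, assume $A$ is almost divisible and write $A = D \oplus R$ with $D$ the maximal divisible subgroup and $R$ reduced. By the observation, $l_p(R) = l_p(A) < \omega$ for all $p$ and $l_p(R) = 0$ for almost all $p$, so $R$ is almost divisible; being also reduced, the previous proposition shows $R$ is bounded, and $A$ is the direct sum of the divisible group $D$ and the bounded group $R$. For the converse, assume $A = D \oplus B$ with $D$ divisible and $B$ bounded. By the previous proposition $B$ is almost divisible, and by the observation $l_p(A) = l_p(B)$ for all $p$, so $A$ is almost divisible.

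Finally, for the algebraic compactness assertion, I would use that a divisible abelian group and a bounded abelian group are each algebraically compact, and that a finite direct sum of algebraically compact groups is algebraically compact; combining these facts (\cite[Theorems 21.2 and 27.5]{Fu1}) shows $A = D \oplus B$ is algebraically compact. I do not expect any genuine obstacle here: the only point that requires a moment's care is the behaviour of the $p$-length under the divisible–reduced splitting, which is exactly the small computation isolated in the first step.
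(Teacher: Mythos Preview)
Your proposal is correct and follows exactly the route the paper indicates: the paper's entire proof is the single sentence ``By decomposing an abelian group into its divisible and reduced parts we obtain the following,'' and you have spelled out precisely how that reduction works via the previous proposition. Your explicit verification that $l_p(D\oplus R)=l_p(R)$ when $D$ is divisible is the one detail the paper leaves to the reader, and your argument for it is fine.
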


This corollary naturally leads to the following:
\begin{define}
  An abelian group is called \emph{almost uniquely divisible} if it is the direct sum of a uniquely divisible group and a bounded group. In particular, an almost uniquely divisible group is almost divisible.
\end{define}

\section{A general diagram category}
Throughout this section, we let $A$ be an abelian group, $\cD$ a small category and $X:\cD\lrar \Set_*$ a diagram of pointed sets. The purpose of this paper is to study the kernel and cokernel of the natural map
$$\rho:({\lim}_\cD X)\wedge A\lrar{\lim}_\cD(X\wedge A).$$

Let $Y$ be a pointed set. Recall that
$$Y\wedge A=\bigoplus_{Y\setminus\{*\}}A.$$
Elements $y\in Y\wedge A$ can be identified with pointed maps $s\mapsto y_s:Y\to A$ such that $\supp(y):=\{s\in Y:y_s\neq 0\}$ is finite. For every $s\in Y\setminus\{*\}$ let $i_s:A\to Y\wedge A$ be the inclusion in the $s$-coordinate and let $i_*:A\to Y\wedge A$ be the zero map. If $s\in Y$ and $v\in A$ we denote $sv:=i_s(v)$. Then clearly we have
$$y=\bigoplus_{s\in Y\setminus\{*\}}y_s=\sum_{s\in \supp(y)}sy_s=\sum_{s\in Y}sy_s.$$
We denote $|y|:=|\supp(y)|$.
Note, that if $h\in {\lim}_\cD (X\wedge A)$ and $s\to t$ is a morphism in $\cD$, then $X(s\to t)(h(s))=h(t)$ so it is easy to see that $|h(s)|\geq |h(t)|$.

Let us describe the natural map $\rho$ more explicitly. Since every element in $({\lim}_\cD X)\wedge A$ is a finite sum of element of the form $xv$ for $x\in {\lim}_\cD X$ and $v\in A$, it is enough to define $\rho$ on these elements. This is given by
$$\rho(xv)(s)=x(s)v\in X(s)\wedge A,$$
for every $s\in\cD$.

\begin{lem}\label{l:oplus}
  If $A'$ is another abelian group, with natural map $\rho'$ (relative to $X$), then we have natural isomorphisms
$$({\lim}_\cD X)\wedge (A\oplus A')\cong (({\lim}_\cD X)\wedge A)\oplus (({\lim}_\cD X)\wedge A')$$
and
$$\:\:{\lim}_\cD (X\wedge (A\oplus A'))\cong ({\lim}_\cD (X\wedge A))\oplus ({\lim}_\cD (X\wedge A')),$$
under which, the natural map for $A\oplus A'$ becomes $\rho\oplus\rho'$. In particular, the (co)kernel of the natural map for $A\oplus A'$ is direct sum of the (co)kernels of $\rho$ and $\rho'$.
\end{lem}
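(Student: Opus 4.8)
The plan is to establish both isomorphisms pointwise, where they reduce to the standard decomposition of a direct sum, and then to pass to the inverse limit, the only subtlety being to keep track of naturality so that the passage is legitimate.

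First I would record, for an arbitrary pointed set $Y$, the canonical isomorphism
$$\phi_Y\colon Y\wedge(A\oplus A')=\bigoplus_{Y\setminus\{*\}}(A\oplus A')\ \xrightarrow{\ \cong\ }\ \Big(\bigoplus_{Y\setminus\{*\}}A\Big)\oplus\Big(\bigoplus_{Y\setminus\{*\}}A'\Big)=(Y\wedge A)\oplus(Y\wedge A'),$$
which on a generator $sv$ with $s\in Y$ and $v=(a,a')$ sends $sv$ to $(sa,sa')$. From this formula it is immediate that $\phi$ is natural in $Y$: for a pointed map $f\colon Y\to Y'$ both composites $\phi_{Y'}\circ(f\wedge(A\oplus A'))$ and $((f\wedge A)\oplus(f\wedge A'))\circ\phi_Y$ send $sv$ to $(f(s)a,f(s)a')$. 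Taking $Y=\lim_\cD X$ gives the first displayed isomorphism.

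Next I would use naturality of $\phi$ in $Y$ to view it as an isomorphism of $\cD$-diagrams of abelian groups between $X\wedge(A\oplus A')$ and the levelwise direct sum diagram $(X\wedge A)\oplus(X\wedge A')$. Applying $\lim_\cD$, and using that inverse limits commute with finite products together with the fact that in $\AbC$ a finite product agrees with the coproduct, yields the second displayed isomorphism $\lim_\cD(X\wedge(A\oplus A'))\cong\lim_\cD(X\wedge A)\oplus\lim_\cD(X\wedge A')$.

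Finally I would identify the natural map. Writing $\rho,\rho',\rho''$ for the natural maps attached to $A$, $A'$, $A\oplus A'$, all three are homomorphisms, so it suffices to compare $\rho''$ with $\rho\oplus\rho'$ on generators $xv$ with $x\in\lim_\cD X$ and $v=(a,a')$. Under the first isomorphism $xv\mapsto(xa,xa')$, while by the explicit description $\rho''(xv)(s)=x(s)v$, so under the levelwise $\phi_{X(s)}$ the element $\rho''(xv)$ corresponds to the family $s\mapsto(x(s)a,x(s)a')$, that is, to $(\rho(xa),\rho'(xa'))$; hence $\rho''$ corresponds to $\rho\oplus\rho'$. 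The ``in particular'' clause then follows from the elementary fact that the kernel (resp.\ cokernel) of a direct sum of two homomorphisms is the direct sum of their kernels (resp.\ cokernels). There is no real obstacle here; the one point that needs care is checking that $\phi$ is natural in the $Y$-variable in a way compatible with the connecting maps of $X$, which is exactly what allows $\phi$ to be pushed through $\lim_\cD$.
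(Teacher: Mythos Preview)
Your proof is correct and follows essentially the same approach as the paper: establish the pointwise isomorphism $Y\wedge(A\oplus A')\cong(Y\wedge A)\oplus(Y\wedge A')$, then use that in $\AbC$ finite direct sums coincide with finite products so that $\lim_\cD$ commutes with them. The paper's own proof is a terse one-liner recording exactly this chain of isomorphisms, whereas you have spelled out the naturality check and the identification of $\rho''$ with $\rho\oplus\rho'$ on generators; these details are correct and indeed constitute the content the paper leaves implicit.
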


\begin{proof}
Since in $\AbC$ a finite direct sum is also a finite direct product, we have
  $$\:\:{\lim}_\cD (X\wedge (A\oplus A'))\cong {\lim}_\cD ((X\wedge A)\oplus (X\wedge A'))\cong ({\lim}_\cD (X\wedge A))\oplus ({\lim}_\cD (X\wedge A')),$$
  and the result is easily seen.
\end{proof}

Recall that the abelian group $A$ is called $m$-bounded ($m\in\NN$) if $mA=\{0\}$.

\begin{thm}\label{t:p divisible gen}
We have the following:
\begin{enumerate}
  \item If $A$ is $m$-bounded then the kernel and cokernel of $\rho$ are $m$-bounded.
  \item If $A$ is uniquely divisible then the kernel and cokernel of $\rho$ are uniquely divisible.
  \item If $A$ is almost uniquely divisible then the kernel and cokernel of $\rho$ are almost uniquely divisible.
\end{enumerate}
\end{thm}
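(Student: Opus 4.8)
The plan is to treat the three cases in increasing order of generality, reducing (3) to (1) and (2) via Lemma~\ref{l:oplus}. For part (1), suppose $mA=\{0\}$. Then both source and target of $\rho$ are $m$-bounded: indeed $({\lim}_\cD X)\wedge A=\bigoplus_{{\lim}_\cD X\setminus\{*\}}A$ is a direct sum of copies of $A$, hence $m$-bounded, and ${\lim}_\cD(X\wedge A)$ is a subgroup of a product of copies of $X(s)\wedge A=\bigoplus_{X(s)\setminus\{*\}}A$, which are all $m$-bounded. A kernel and a cokernel of a map between $m$-bounded groups is again $m$-bounded, so part (1) is immediate.

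For part (2), assume $A$ is uniquely divisible, i.e. $A$ is a $\QQ$-vector space. Then $Y\wedge A=\bigoplus_{Y\setminus\{*\}}A$ is a $\QQ$-vector space for any pointed set $Y$, and the functor $Y\mapsto Y\wedge A$ lands in $\QQ$-vector spaces. Hence both $({\lim}_\cD X)\wedge A$ and every $X(s)\wedge A$ are $\QQ$-vector spaces; since a product of $\QQ$-vector spaces is a $\QQ$-vector space and the subgroup ${\lim}_\cD(X\wedge A)$ is then a $\QQ$-subspace, the target of $\rho$ is a $\QQ$-vector space. The map $\rho$, being a homomorphism of $\QQ$-vector spaces, has kernel and cokernel that are again $\QQ$-vector spaces, i.e. uniquely divisible. (One should check $\rho$ is $\QQ$-linear: it is additive by construction, and $\QQ$-linearity of an additive map between $\QQ$-vector spaces is automatic.)

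For part (3), write $A=D\oplus B$ with $D$ uniquely divisible and $B$ bounded, say $mB=\{0\}$. By Lemma~\ref{l:oplus}, the kernel of $\rho$ is the direct sum of the kernel of the natural map for $D$ and the kernel of the natural map for $B$, and similarly for the cokernel. By part (2) the $D$-parts are uniquely divisible, and by part (1) the $B$-parts are $m$-bounded; hence $\ker\rho$ and $\coker\rho$ are each a direct sum of a uniquely divisible group and an $m$-bounded (in particular bounded) group, i.e. almost uniquely divisible. The main point requiring care throughout is simply the bookkeeping with infinite direct sums and products — verifying that $\wedge A$ preserves the relevant divisibility/boundedness properties coordinatewise and that these pass to the inverse limit as a subgroup of a product — but no real obstacle arises since $m$-boundedness and the $\QQ$-vector space property are both closed under arbitrary products, subgroups, and quotients.
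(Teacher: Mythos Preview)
Your proof is correct and follows essentially the same route as the paper: the paper phrases (1) and (2) by observing that $(-)\wedge A$ lands in $R$-modules for $R=\ZZ/m\ZZ$ or $R=\QQ$, so that $\rho$ is an $R$-module map and its kernel and cokernel are $R$-modules, and then deduces (3) from (1), (2) and Lemma~\ref{l:oplus} exactly as you do. One small caveat on your closing sentence: the $\QQ$-vector-space property is \emph{not} closed under arbitrary abelian subgroups (e.g.\ $\ZZ\subset\QQ$), but this does not affect your argument, since you already noted that additive maps between $\QQ$-vector spaces are automatically $\QQ$-linear, which is precisely what forces ${\lim}_\cD(X\wedge A)$ to be a $\QQ$-subspace rather than merely a subgroup.
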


\begin{proof}
  \begin{enumerate}
    \item It is not hard to see that $A$ is $m$-bounded iff there exists a structure of a $\ZZ(m)$-module on $A$. (If such a structure exists, it is necessarily unique.) So suppose that $A$ is $m$-bounded. Then $A$ is a $\ZZ(m)$-module and the functor $(-)\wedge A$ on pointed sets takes values in the category of $\ZZ(m)$-modules. Thus, the natural map $\rho$ is a map between $\ZZ(m)$-modules and its kernel and cokernel are also $\ZZ(m)$-modules.
    \item Similar to (1), with $\ZZ(m)$ replaced by $\QQ$, using the fact that $A$ is uniquely divisible iff there exists a structure of a $\QQ$-vector space on $A$ (and if such a structure exists, it is necessarily unique).
    \item Suppose that $A$ is almost uniquely divisible. There exists a direct sum decomposition $A=Q\oplus B$ with $Q$ uniquely divisible and $B$ bounded. By Lemma \ref{l:oplus}, the (co)kernel of $\rho$ is the direct sum of the (co)kernels of the natural maps for $Q$ and $B$. Now the result follows from (1) and (2).
  \end{enumerate}
\end{proof}

\section{A general directed poset}
From now until the end of this paper we assume that our diagram category is a directed poset (considered as a category which has a single morphism $t\to s$ whenever $t\geq s$). So, throughout this section, we let $A$ be an abelian group, $\cT$ a directed poset and $X:\cT\lrar \Set_*$ a diagram of pointed sets. Recall that we have a natural map
$$\rho:({\lim}_\cT X)\wedge A\lrar{\lim}_\cT(X\wedge A).$$

\begin{lem}\label{l:inj}
  The map $\rho$ is injective.
\end{lem}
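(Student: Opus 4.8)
The plan is to show that any element of the kernel of $\rho$ must already be $0$ in $({\lim}_\cT X)\wedge A$. An arbitrary element of $({\lim}_\cT X)\wedge A = \bigoplus_{{\lim}_\cT X\setminus\{*\}} A$ has the form $z = \sum_{j=1}^k x^{(j)} v_j$, where $x^{(1)},\dots,x^{(k)}$ are \emph{distinct} elements of $({\lim}_\cT X)\setminus\{*\}$ and $v_1,\dots,v_k \in A\setminus\{0\}$; this is just the support decomposition of $z$, so I may and do assume the $x^{(j)}$ are pairwise distinct and the $v_j$ nonzero. I want to show that if $\rho(z)=0$ then $k=0$. The point of directedness is the following observation: since ${\lim}_\cT X \subseteq \prod_{t\in\cT} X(t)$, two distinct elements $x^{(i)}\neq x^{(j)}$ differ in \emph{some} coordinate, i.e.\ there is $t_{ij}\in\cT$ with $x^{(i)}(t_{ij})\neq x^{(j)}(t_{ij})$; and once they differ at $t_{ij}$, they differ at every $t\geq t_{ij}$, because $x^{(i)}(t_{ij}) = X(t\to t_{ij})(x^{(i)}(t))$ and similarly for $x^{(j)}$, so equality at $t$ would force equality at $t_{ij}$.

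The key step is then to use directedness of $\cT$ to choose a single index $s\in\cT$ that witnesses \emph{all} the inequalities simultaneously together with the non-basepoint-ness of each $x^{(j)}$. Concretely: the finite set of indices consisting of all $t_{ij}$ (for $1\le i<j\le k$) and, for each $j$, some index $s_j$ with $x^{(j)}(s_j)\neq *$, has an upper bound $s\in\cT$ since $\cT$ is directed. By the remark above, at this $s$ the elements $x^{(1)}(s),\dots,x^{(k)}(s)$ of $X(s)$ are pairwise distinct, and each is $\neq *$ (since if $x^{(j)}(s)=*$ then $x^{(j)}(s_j)=X(s\to s_j)(*) = *$, as $X$ is a diagram of \emph{pointed} sets and all transition maps are pointed, contradiction). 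Now evaluate: by the explicit description of $\rho$,
$$\rho(z)(s) \;=\; \sum_{j=1}^k x^{(j)}(s)\, v_j \;\in\; X(s)\wedge A \;=\; \bigoplus_{X(s)\setminus\{*\}} A,$$
and since the $x^{(j)}(s)$ are distinct non-basepoint elements of $X(s)$, this sum has support exactly $\{x^{(1)}(s),\dots,x^{(k)}(s)\}$ with values $v_1,\dots,v_k\neq 0$. Hence $\rho(z)(s)=0$ forces $k=0$, i.e.\ $z=0$.

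The only genuine point to get right is the simultaneous-witness argument — that finitely many pairwise inequalities among compatible families, plus finitely many "not the basepoint" conditions, can all be realized at one index $s$ — and this is exactly where directedness of $\cT$ is used; the rest is bookkeeping with the support decomposition and the pointedness of the transition maps. I expect no further obstacle: there is no appeal to countability or Mittag-Leffler here, consistent with the statement being for a general directed poset.
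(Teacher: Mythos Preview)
Your proof is correct and follows essentially the same approach as the paper: write a kernel element in support form, use directedness to find a single index $s$ at which the finitely many supporting elements of $\lim_\cT X$ have pairwise distinct, non-basepoint values, and conclude that $\rho(z)(s)\neq 0$. The only cosmetic difference is that the paper handles the ``not equal to the basepoint'' condition by adjoining $x_0:=*$ to the list and treating it as one more pairwise distinctness, whereas you treat it via separate witnesses $s_j$; both are fine.
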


\begin{proof}
  We need to show that $\ker\rho=\{0\}$. So assume (to derive a contradiction)
  $m\geq 1$, $\sum_{k=1}^{m}x_kv_k\in \ker\rho$, $x_1,\dots,x_m\in \lim_\cT X\setminus\{*\}$ are different and $v_1,\dots,v_m\in A\setminus\{0\}$.

  Let $x_{0}:=*\in \lim_\cT X$ be the special point and let $k,l\in\{0,\dots,m\}$. Since $x_k\neq x_l$, there exists $t_{k,l}\in\cT$ such that $x_k(t_{k,l})\neq x_l(t_{k,l})$. Since $x_k,x_l\in \lim_\cT X$, we also have $x_k(s)\neq x_l(s)$ for every $s\geq t_{k,l}$. But $\cT$ is directed, so we can find $t\in\cT$ such that $t\geq t_{k,l}$ for every $k,l\in\{0,\dots,m\}$. Thus, we obtain that $*,x_1(t),\dots,x_m(t)\in X(t)$ are all different and we have
  $$\rho(\sum_{k=1}^{m}x_kv_k)(t)=\sum_{k=1}^{m}\rho(x_kv_k)(t)= \sum_{k=1}^{m}x_k(t)v_k\neq 0,$$
  contradicting the fact that $\sum_{k=1}^{m}x_kv_k\in \ker\rho$.
\end{proof}

Lemma \ref{l:inj} tells us that $\ker(\rho)=\{0\}$, so we only need to understand $\coker(\rho)$. From now on we will identify $({\lim}_\cT X)\wedge A$ with $\im(\rho)$ using $\rho$.
We define functors:
$$G(X,-):={\lim}_\cT (X\wedge (-)):\AbC\to\AbC,$$
$$K(X,-):=({\lim}_\cT X)\wedge (-):\AbC\to\AbC.$$
The natural map $\rho$ is a subfunctor inclusion $K(X,-)\hookrightarrow G(X,-)$, and we define
$$H(X,-):=\coker(\rho)=G(X,-)/K(X,-):\AbC\to\AbC.$$
For $h\in G(X,A)$, we will denote by $[h]$ the corresponding element in $H(X,A)$.

\begin{prop}\label{p:bounded}
  Let $h\in G(X,A)$. Then $h\in K(X,A)$ iff $\{|h(t)|:t\in\cT\}$ is bounded.
\end{prop}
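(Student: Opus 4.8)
The plan is to prove the two implications separately; the forward direction is immediate and the converse is the substantive part.

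Suppose first that $h\in K(X,A)$. By the explicit description of $\rho$ we may write $h=\rho(\sum_{k=1}^m x_k v_k)$ with $x_k\in\lim_\cT X$ and $v_k\in A$, so that $h(t)=\sum_{k=1}^m x_k(t)v_k$ for every $t$. Then $\supp(h(t))\subseteq\{x_1(t),\dots,x_m(t)\}\setminus\{*\}$, hence $|h(t)|\leq m$ for all $t\in\cT$, which is the desired bound.

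For the converse, assume $N:=\sup\{|h(t)|:t\in\cT\}<\infty$, and first exploit the fact that $h$ lies in the inverse limit. For $s\geq t$ we have $h(t)=X(s\to t)(h(s))$, whence $\supp(h(t))\subseteq X(s\to t)(\supp(h(s)))\setminus\{*\}$ and thus $|h(t)|\leq|h(s)|$. Since $N$ is a finite supremum of natural numbers it is attained, say $|h(t_0)|=N$; then for every $s\geq t_0$ the inequalities $N=|h(t_0)|\leq|h(s)|\leq N$ are all equalities. A counting argument then shows that for all $s\geq t\geq t_0$ the equality $|h(s)|=|h(t)|=N$ forces $X(s\to t)$ to restrict to a \emph{bijection} $\supp(h(s))\to\supp(h(t))$ which sends no support element to the basepoint, and under which the coefficients of $h(s)$ and $h(t)$ correspond (the support size cannot drop, so there is no collapsing onto $*$, no collision among support elements, and no cancellation).

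It remains to thread these bijections into elements of the limit. Restricting to the cofinal sub-poset $\cT_{\geq t_0}:=\{t\in\cT:t\geq t_0\}$, which is limit-preserving, write $\supp(h(t_0))=\{a_1,\dots,a_N\}$ with $h(t_0)_{a_k}=v_k$, and for $t\geq t_0$ let $x_k(t)$ be the unique element of $\supp(h(t))$ with $X(t\to t_0)(x_k(t))=a_k$. Functoriality of $X$ together with the uniqueness in the bijection $\supp(h(t))\to\supp(h(t_0))$ gives $X(s\to t)(x_k(s))=x_k(t)$ for $s\geq t\geq t_0$, so each $x_k$ determines an element of $\lim_{\cT_{\geq t_0}}X\cong\lim_\cT X$; and the coefficient-preserving property gives $h(t)_{x_k(t)}=v_k$ and $\supp(h(t))=\{x_1(t),\dots,x_N(t)\}$. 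Hence $h(t)=\sum_{k=1}^N x_k(t)v_k=\rho(\sum_{k=1}^N x_k v_k)(t)$ for every $t\geq t_0$, and by cofinality $h=\rho(\sum_{k=1}^N x_k v_k)\in K(X,A)$.

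I do not anticipate a genuine obstacle. The two points requiring care are the counting argument that upgrades "$|h(s)|=|h(t)|$" to the bijectivity-and-coefficient-matching statement for the connecting maps on supports, and the verification that the threaded families $(x_k(t))_t$ are actually compatible — both are elementary but must be carried out via the uniqueness in the support bijections. One should also remember to pass to the cofinal sub-poset $\cT_{\geq t_0}$, so that working "eventually" is the same as working in $\lim_\cT$.
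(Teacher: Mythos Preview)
Your proposal is correct and follows essentially the same approach as the paper: both locate a $t_0$ at which $|h(t_0)|$ attains the supremum, argue that for $t\geq t_0$ the connecting maps restrict to bijections on supports with matching coefficients, and then assemble the support elements into $x_1,\dots,x_N\in\lim_\cT X$ exhibiting $h$ in $K(X,A)$. The only cosmetic difference is that you invoke the cofinality of $\cT_{\geq t_0}$ to identify $\lim_{\cT_{\geq t_0}}X\cong\lim_\cT X$, whereas the paper instead extends each $x_k$ to all of $\cT$ explicitly by choosing, for arbitrary $t$, some $s\geq t,t_0$ and setting $x_k(t):=\phi_{s,t}(x_{k,s})$.
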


\begin{proof}
  Suppose that $h\in K(X,A)=({\lim}_\cT X)\wedge A$. Then $h=\sum_{k=1}^{m}x_kv_k$ for some $x_i\in \lim_\cT X$ and $v_i\in A$. Thus, for every $t\in\cT$ we have
  $$h(t)=\sum_{k=1}^{m}(x_kv_k)(t)=\sum_{k=1}^{m}x_k(t)v_k,$$
  which shows that $|h(t)|\leq m$.

  Now suppose that $m:=\sup\{|h(t)|:t\in\cT\}\in\NN.$
  Choose $t_0\in \cT$ such that $m=|h(t_0)|$. We can find different $x_{1,t_0},\dots,x_{m,t_0}\in X(t_0)\setminus\{*\}$ and $v_1,\dots,v_m\in A\setminus\{0\}$ such that $h(t_0)=\sum_{k=1}^{m}x_{k,t_0}v_k$.

  Let $t\geq t_0$. Since $|h(-)|$ is monotone increasing we have that $m=|h(t)|$. Thus, there are unique (different) $x_{1,t},\dots,x_{m,t}\in X(t)$ such that $\phi_{t,t_0}(x_{k,t})=x_{k,t_0}$ and $h(t)=\sum_{k=1}^{m}x_{k,t}v_k$. It is not hard to verify that if $s\geq t\geq t_0$ then $\phi_{s,t}({x_{k,s}})=x_{k,t}$.

  Now let $k=1,\dots,m$ and let $t\in\cT$. Since $\cT$ is directed, we can find $s\in\cT$, such that $s\geq t$ and $s\geq t_0$. We define $x_k(t):=\phi_{s,t}(x_{k,s})\in X(t)$. It can be verified that this definition is independent of the choice of $s$ and defines an element $x_k\in \lim_\cT X$.

  We claim that $h=\sum_{k=1}^{m}x_{k}v_k$.
  So let $t\in\cT$ and choose $s\in\cT$ such that $s\geq t,t_0$. By the construction above we have that $x_k(t)=\phi_{s,t}(x_{k,s})$ and $h(s)=\sum_{k=1}^{m}x_{k,s}v_k$, so that
  $$h(t)=\phi_{s,t}(h(s))=\phi_{s,t}(\sum_{k=1}^{m}x_{k,s}v_k)= \sum_{k=1}^{m}\phi_{s,t}(x_{k,s}v_k)=$$
  $$ \sum_{k=1}^{m}\phi_{s,t}(x_{k,s})v_k=
  \sum_{k=1}^{m}x_{k}(t)v_k=\sum_{k=1}^{m}(x_{k}v_k)(t)= (\sum_{k=1}^{m}x_{k}v_k)(t).$$
\end{proof}

\begin{cor}\label{c:tor free}
  If $A$ is torsion free then $H(X,A)$ is torsion free.
\end{cor}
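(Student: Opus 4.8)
The plan is to show the contrapositive at the level of elements: if some nonzero class $[h]\in H(X,A)$ is torsion, then $A$ itself has a nonzero torsion element. So suppose $h\in G(X,A)$ with $[h]\neq 0$ but $n[h]=0$ for some $n\geq 1$; equivalently $nh\in K(X,A)$ while $h\notin K(X,A)$. By Proposition \ref{p:bounded}, the set $\{|(nh)(t)|:t\in\cT\}$ is bounded while $\{|h(t)|:t\in\cT\}$ is unbounded. The key observation is that, for each $t$, the element $(nh)(t)\in X(t)\wedge A$ equals $\sum_{s}s\cdot(n\,h(t)_s)$, so its support is exactly $\{s : n\,h(t)_s\neq 0\}\subseteq\supp(h(t))$. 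Since $|h(t)|$ is unbounded but $|(nh)(t)|$ is bounded, for $t$ large enough there must exist a coordinate $s\in\supp(h(t))$ — that is, $h(t)_s\neq 0$ — with $n\,h(t)_s=0$. That already exhibits a nonzero element of $A$ killed by $n$, so $A$ is not torsion free.

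More carefully, I would argue as follows. Let $m:=\sup\{|(nh)(t)|:t\in\cT\}\in\NN$, which is finite by Proposition \ref{p:bounded} applied to $nh\in K(X,A)$. Since $h\notin K(X,A)$, again by Proposition \ref{p:bounded} the function $t\mapsto |h(t)|$ is unbounded, so pick $t\in\cT$ with $|h(t)|>m$. Writing $h(t)=\sum_{s\in\supp(h(t))} s\,h(t)_s$ with all $h(t)_s\neq 0$, we have $(nh)(t)=\sum_{s\in\supp(h(t))} s\,(n h(t)_s)$, and the support of this element is $\{s\in\supp(h(t)): n h(t)_s\neq 0\}$, which has size $|(nh)(t)|\leq m<|h(t)|=|\supp(h(t))|$. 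Hence there is at least one $s\in\supp(h(t))$ with $n h(t)_s=0$; since $h(t)_s\neq 0$, the element $h(t)_s\in A$ is a nonzero torsion element, contradicting that $A$ is torsion free. Therefore $H(X,A)$ has no nonzero torsion element.

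I do not anticipate a serious obstacle here; the whole argument is a direct unwinding of Proposition \ref{p:bounded} together with the coordinatewise description of $X(t)\wedge A=\bigoplus_{X(t)\setminus\{*\}}A$. The only point that needs a word of care is the compatibility of "$n\cdot$" with the identification $\rho: K(X,A)\hookrightarrow G(X,A)$ and with the evaluation maps $h\mapsto h(t)$ — but these are all $\ZZ$-linear (indeed $G(X,-)$ and $K(X,-)$ are functors $\AbC\to\AbC$, and evaluation at $t$ is the component of a natural transformation), so multiplication by $n$ commutes with everything in sight, and $(nh)(t)=n\cdot h(t)$ computed inside $X(t)\wedge A$. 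This justifies the support computation above and completes the proof.
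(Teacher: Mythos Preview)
Your proof is correct and takes essentially the same approach as the paper: both reduce to Proposition~\ref{p:bounded} via the observation that the support of $(nh)(t)$ is the subset of $\supp(h(t))$ on which $n\cdot(-)$ does not vanish, which equals $\supp(h(t))$ precisely when $A$ is torsion free. The paper states this directly (if $A$ is torsion free then $|kh(t)|=|h(t)|$ for all $t$, so the two support-size sets coincide), while you argue the contrapositive, but the content is identical.
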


\begin{proof}
  Let $k>0$ and let $h\in G(X,A)$. Suppose that $kh\in K(X,A)$. We need to show that $h\in K(X,A)$. Since $A$ is torsion free, for every $t\in\cT$ we have $|kh(t)|=|h(t)|$. Thus
  $$\{|h(t)|:t\in\cT\}=\{|kh(t)|:t\in\cT\},$$
  and the result follows from Proposition \ref{p:bounded}.
\end{proof}

\begin{cor}\label{c:a1 directed}
  If $\cT$ is $\aleph_1$-directed then $H(X,A)=0$.
\end{cor}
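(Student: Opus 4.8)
The plan is to prove that $\rho$ is surjective; combined with Lemma~\ref{l:inj}, this gives $H(X,A)=\coker(\rho)=0$. By Proposition~\ref{p:bounded}, it suffices to show that for every $h\in G(X,A)$ the set $\{|h(t)|:t\in\cT\}\subseteq\NN$ is bounded, for then $h\in K(X,A)$, and since $h$ is arbitrary we get $K(X,A)=G(X,A)$.

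So suppose, toward a contradiction, that $\{|h(t)|:t\in\cT\}$ is unbounded for some $h\in G(X,A)$. Recall that $|h(-)|$ is monotone increasing along $\cT$ (as noted before Lemma~\ref{l:oplus} and used in the proof of Proposition~\ref{p:bounded}): if $s\geq t$ then $X(s\to t)(h(s))=h(t)$, so $|h(s)|\geq|h(t)|$. Using this together with directedness of $\cT$, I would build recursively an increasing sequence $s_0\leq s_1\leq s_2\leq\cdots$ in $\cT$ with $|h(s_n)|\geq n$: having chosen $s_n$, pick any $s'\in\cT$ with $|h(s')|>|h(s_n)|$ (possible by unboundedness), and then, by directedness, pick $s_{n+1}\in\cT$ with $s_{n+1}\geq s_n$ and $s_{n+1}\geq s'$; monotonicity gives $|h(s_{n+1})|\geq|h(s')|>|h(s_n)|\geq n$.

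Now $\{s_n:n\in\NN\}$ is a countable subset of $\cT$, so by the $\aleph_1$-directedness of $\cT$ there is $t^*\in\cT$ with $t^*\geq s_n$ for every $n$. Monotonicity of $|h(-)|$ then gives $|h(t^*)|\geq|h(s_n)|\geq n$ for all $n\in\NN$. But $h(t^*)\in X(t^*)\wedge A=\bigoplus_{X(t^*)\setminus\{*\}}A$ has finite support by definition, so $|h(t^*)|<\infty$ --- a contradiction. Hence $\{|h(t)|:t\in\cT\}$ is bounded, $h\in K(X,A)$ by Proposition~\ref{p:bounded}, and therefore $H(X,A)=0$.

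There is no deep obstacle here: the whole content is the observation that $\aleph_1$-directedness supplies an upper bound $t^*$ for a countably infinite strictly $|h|$-increasing chain, forcing $|h(t^*)|$ to be infinite, which is absurd. The only point requiring a little care is extracting the increasing chain $(s_n)$ from mere unboundedness of $\{|h(t)|\}$, which is where directedness and the monotonicity of $|h(-)|$ are used; everything else is immediate from Proposition~\ref{p:bounded} and Lemma~\ref{l:inj}.
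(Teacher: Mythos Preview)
Your proof is correct and follows essentially the same route as the paper: assume unboundedness, produce a countable family with $|h(\cdot)|$ unbounded, invoke $\aleph_1$-directedness to find a common upper bound, and derive a contradiction via monotonicity of $|h(-)|$ and Proposition~\ref{p:bounded}. The only difference is that you take the extra step of making the sequence $(s_n)$ increasing, which is unnecessary---the paper simply picks any $t_n$ with $|h(t_n)|\geq n$, since monotonicity at the single upper bound $t^*$ already forces $|h(t^*)|\geq|h(t_n)|$ regardless of how the $t_n$ relate to one another.
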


\begin{proof}
  Let $h\in G(X,A)$. By Proposition \ref{p:bounded} we need to show that $\{|h(t)|:t\in\cT\}$ is bounded. Suppose that $\{|h(t)|:t\in\cT\}$ is unbounded. For every $n\in\NN$ we can choose $t_n\in\cT$ such that $|h(t_n)|\geq n$. Since $\cT$ is $\aleph_1$-directed, there exists $s\in\cT$ such that $s\geq t_n$ for every $n\in\NN$. Now for every $n\in\NN$ we have $|h(s)|\geq |h(t_n)|\geq n$, which is a contradiction.
\end{proof}

We now look on the case when the diagram is composed of finite sets.
\begin{thm}\label{t:p divisible}
Suppose $X:\cT\lrar \Set^f_*$ is a diagram of finite pointed sets. Then we have the following:
\begin{enumerate}
  \item If $A$ is $p$-divisible then $G(X,A)$ and $H(X,A)$ are $p$-divisible.
  \item If $A$ is divisible then $G(X,A)$ and $H(X,A)$ are divisible.
  \item If $A$ is cotorsion then $G(X,A)$ and $H(X,A)$ are cotorsion.
\end{enumerate}
\end{thm}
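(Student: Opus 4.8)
The plan is to reduce everything to the group $G(X,A)$ and then transfer to the quotient $H(X,A)=G(X,A)/K(X,A)$ using the structural results already established. For parts (1) and (2): since $X$ is a diagram of \emph{finite} pointed sets, for each $Y=X(t)$ the functor $(-)\wedge Y$ on abelian groups is just a finite direct sum $\bigoplus_{Y\setminus\{*\}}(-)$, hence it preserves $p$-divisibility and divisibility (both properties are closed under finite, indeed arbitrary, direct sums). So each $X(t)\wedge A$ is $p$-divisible (resp.\ divisible) when $A$ is. Now $G(X,A)=\lim_\cT(X\wedge A)$ is an inverse limit of $p$-divisible (resp.\ divisible) groups; the point is that $p$-divisibility is preserved under arbitrary inverse limits indexed by a \emph{directed} poset. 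Indeed, given $h\in G(X,A)$, divisibility of each $X(t)\wedge A$ gives, for each $t$, \emph{some} preimage of $h(t)$ under multiplication by $p$, but one must choose these compatibly. This is where finiteness of the sets re-enters: the set of $p$-th roots of $h(t)$ in $X(t)\wedge A$ is a coset of the $p$-torsion subgroup $(X(t)\wedge A)[p]$, and when $A[p]$ is finite (which need not hold!) one gets a cofiltered system of nonempty finite sets. In general $A[p]$ is not finite, so instead I would argue directly: the subfunctor $\rho$ identifies $K(X,A)=(\lim_\cT X)\wedge A$, and one shows $G(X,A)$ is $p$-divisible by a bare-hands compatible-choice argument along the directed poset, exploiting that each $h(t)$ has finite support and that the connecting maps $\phi_{s,t}$ are induced by maps of finite sets.

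Here is the cleaner route I would actually take for (1) and (2). Fix $h\in G(X,A)$ and $p$; I want $h=ph'$ for some $h'\in G(X,A)$. For each finite subset — actually, observe that $h$ is supported, over each $t$, on finitely many "threads": more precisely, consider the subfunctor $X'\subseteq X$ of elements $y\in X(t)$ that eventually (cofinally) lie in $\supp$ of $h$; but a slicker phrasing: since $A$ is $p$-divisible, pick for each $t$ the full coset $S_t\subseteq X(t)\wedge A$ of $p$-th roots of $h(t)$. The maps $\phi_{s,t}\wedge A$ carry $S_s$ into $S_t$ (because $\phi(ph(s))=h(t)$ implies $\phi$ sends a root to a root). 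So $(S_t)_t$ is an inverse system of nonempty sets over the directed poset $\cT$; I need one compatible choice, i.e.\ $\lim_\cT S_t\neq\varnothing$. Each $S_t$ is a coset of $V_t:=(X(t)\wedge A)[p]=\bigoplus_{X(t)\setminus\{*\}}A[p]$, so $(S_t)$ is a torsor under the inverse system $(V_t)$ of abelian groups, and $\lim_\cT S_t\neq\varnothing$ iff a certain class in $\lim^1$ vanishes — which over a general directed poset it need not. So finiteness of the \emph{sets} $X(t)$ alone is not obviously enough; I suspect the actual argument in the paper instead works with the explicit description of $G(X,A)$ via Proposition \ref{p:bounded} and a decomposition of $h$ into a part in $K(X,A)$ plus a "tail", proving $p$-divisibility of $H(X,A)$ first and then lifting. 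I would look for: given $[h]\in H(X,A)$, modify $h$ by an element of $K(X,A)$ so that the support sizes $|h(t)|$ grow in a controlled way, and then divide thread-by-thread, each thread being a single coordinate where division happens in $A$.

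For part (3), the cotorsion case, I would use the criterion that $A$ is cotorsion iff $\Ext(\QQ,A)=0$, equivalently (by a standard argument) iff every extension of $\QQ$ by $A$ splits, equivalently the "$p$-adic" criterion: $A$ is cotorsion iff $\Ext(\ZZ[1/p],A)=0$ for all $p$ and $\Ext(\QQ,A)=0$, but more usefully: a reduced group is cotorsion iff it is the inverse limit of a system with surjective maps and... — rather, I would invoke that the class of cotorsion groups is closed under arbitrary direct products, extensions, and quotients, and contains all algebraically compact groups. Since $X(t)$ is finite, $X(t)\wedge A$ is a finite direct sum of copies of $A$, hence cotorsion; then $G(X,A)=\lim_\cT(X\wedge A)$ embeds into $\prod_t(X(t)\wedge A)$, which is cotorsion, with cokernel — this is where I must be careful, since a subgroup of a cotorsion group need not be cotorsion. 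The right statement is: $G(X,A)$ is the kernel of a map between products of cotorsion groups, and cotorsion is \emph{not} closed under kernels in general either. The clean fact I would use is: if $A$ is cotorsion then $A\cong \Ext(\QQ/\ZZ,A)$-type completion arguments show products and inverse limits along directed posets of cotorsion groups are cotorsion \emph{provided} one knows $\lim^1$ of the relevant $\Ext$-system vanishes; for a directed poset $\lim^1$ need not vanish, but $\Ext(\QQ,-)$ commutes with products, giving $\Ext(\QQ,\prod_t X(t)\wedge A)=\prod_t\Ext(\QQ,X(t)\wedge A)=0$, and then a short five-term exact sequence argument for the embedding $G(X,A)\hookrightarrow\prod_t(X(t)\wedge A)$ together with $\Ext(\QQ,-)$ applied to it reduces the problem to controlling $\Hom(\QQ,\prod_t(X(t)\wedge A)/G(X,A))$, i.e.\ to showing the quotient has no divisible subgroup detectable by $\QQ$ — and this quotient is itself a $\lim$ of finite-support groups. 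I expect the genuine obstacle to be exactly this: proving that $G(X,A)=\lim_\cT(X\wedge A)$ inherits cotorsion, since inverse limits over general directed posets are badly behaved; the resolution should again exploit that the connecting maps come from maps of \emph{finite sets}, so that the relevant $\lim^1$-type term is built from inverse systems of finite-rank free modules over $A$ and can be shown to contribute nothing. Once $G(X,A)$ is known cotorsion in each of the three cases, $H(X,A)=G(X,A)/K(X,A)$ is a quotient of it; $p$-divisibility and divisibility pass to quotients trivially, and cotorsion passes to quotients by a cotorsion-free... no: cotorsion passes to arbitrary quotients (a quotient of a cotorsion group is cotorsion, since $\Ext(\QQ,-)$ is right-exact on the relevant piece — more precisely from $0\to K\to G\to H\to 0$ we get $\Ext(\QQ,G)\to\Ext(\QQ,H)\to 0$ because $\Ext^2$ vanishes over $\ZZ$), so $\Ext(\QQ,H(X,A))=0$. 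That completes all three parts.
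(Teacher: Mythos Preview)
Your proposal has genuine gaps in both (1) and (3); you correctly locate the obstruction in each case but do not find the key that unlocks it.

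For (1), you are right that the problem is exactly the nonemptiness of $\lim_\cT S_t$, where $S_t$ is the coset of $p$-th roots of $h(t)$, and that this is a torsor under the system $V_t=(X(t)\wedge A)[p]=\bigoplus_{X(t)\setminus\{*\}}A[p]$. What you miss is that $A[p]$ is \emph{bounded}, hence algebraically compact (\cite[Theorem 27.5]{Fu1}). The paper writes the compatibility requirement for the correction term $h(s):=v(s)-g(s)$ as an explicit system of $\ZZ$-linear equations in unknowns valued in $A[p]$, one equation for each $s<t$ and each coordinate in $X(s)\setminus\{*\}$ (here finiteness of $X(s)$ is used so that the matrix $\Phi_{t,s}$ has finitely many rows and columns). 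By algebraic compactness it suffices to solve every finite subsystem; by directedness of $\cT$ any finite subsystem lies below some $r\in\cT$, and then $h(s):=\phi_{r,s}(g(r))-g(s)$ solves it. Your alternative plan of proving $p$-divisibility of $H(X,A)$ first and then lifting is circular, and the ``thread-by-thread'' modification you sketch does not go through over an arbitrary directed poset.

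For (3), your worry is well-founded: subgroups and kernels of cotorsion groups need not be cotorsion, and your $\Ext(\QQ,-)$ five-term argument does not close. The paper's resolution is structural: decompose $A=D\oplus R$ with $D$ divisible and $R$ reduced cotorsion, so $G(X,A)\cong G(X,D)\oplus G(X,R)$ by Lemma~\ref{l:oplus}. The summand $G(X,D)$ is divisible by part (2), hence cotorsion. For the reduced part, each $X(s)\wedge R=R^{X(s)\setminus\{*\}}$ is a finite product of reduced cotorsion groups and hence reduced cotorsion; the fact you are missing is that an inverse limit of \emph{reduced} cotorsion groups is again cotorsion (\cite[page 233]{Fu1}), which is exactly the closure property that fails for general cotorsion groups but holds in the reduced case. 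Your final paragraph correctly handles the passage from $G$ to $H$: quotients of cotorsion groups are cotorsion since $\Ext^2_\ZZ=0$.
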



\begin{proof}\
\begin{enumerate}
  \item The result for $H$ follows easily from the result for $G$, so it is enough to show that $G(X,A)$ is $p$-divisible.
So let $f\in G(X,A)$ and we need to show that there exists $v\in G(X,A)$ such that $pv=f$.

Let $s\in\cT$. The abelian group $X(s)\wedge A$ is $p$-divisible, so there exists $g(s)\in X(s)\wedge A$ such that $pg(s)=f(s).$
We claim that it is enough to show that there exists $$h\in\prod_{s\in\cT}(X(s)\wedge A)[p]$$
such that for every $s<t$ in $\cT$ we have
$$h(s)=\phi_{t,s}(h(t))+\phi_{t,s}(g(t))-g(s).$$
Because, given such $h$ we can define for every $s\in\cT$
$$v(s):=g(s)+h(s)\in X(s)\wedge A.$$
Now, for every $s<t$ in $\cT$ we have
$$\phi_{t,s}(v(t))=\phi_{t,s}(g(t))+\phi_{t,s}(h(t))= g(s)+h(s)=v(s),$$
so $v\in G(X,A)$. Furthermore, for every $s\in\cT$ we have
$$pv(s)=pg(s)+ph(s)=f(s)+0=f(s),$$
so $pv=f$ in $G(X,A)$.

For every $s\in\cT$, since $X(s)$ is finite, we have that
$$(X(s)\wedge A)[p]= A[p]^{X(s)\setminus\{*\}}.$$
For $s<t$ in $\cT$ the structure map
$\phi_{t,s}:(X(t)\wedge A)[p]\to (X(s)\wedge A)[p]$
is given by a
$$(X(s)\setminus\{*\})\times (X(t)\setminus\{*\})$$
matrix, with integer coefficients, which we denote $\Phi_{t,s}$. In fact, for $a\in X(s)\setminus\{*\}$ and $b\in X(t)\setminus\{*\}$, we have
$$(\Phi_{t,s})_{a,b}=
\begin{cases}
   1 & \text{ if $a=\phi_{t,s}(b)$,}\\
   0 & \text{ else.}
\end{cases}$$
Thus, it is enough to show that there exists $$h\in\prod_{s\in\cT}A[p]^{X(s)\setminus\{*\}}= \prod_{s\in\cT}\prod_{X(s)\setminus\{*\}}A[p]$$
such that for every $s<t$ in $\cT$ and $a\in X(s)\setminus\{*\}$ we have
$$h(s)_a=\sum_{b\in X(t)\setminus\{*\}}(\Phi_{t,s})_{a,b}h(t)_b+(\phi_{t,s}(g(t))_a -g(s)_a).$$
Note that for every such equation we have
$$\phi_{t,s}(g(t))_a -g(s)_a\in A[p],$$
so we obtain a system of equations over the abelian group $A[p]$ as defined in \cite[Section 22]{Fu1}. Specifically, we have an equation for every $s<t$ in $\cT$ and $a\in X(s)\setminus\{*\}$, with unknowns $h(s)_a$ for every $s\in \cT$ and $a\in X(s)\setminus\{*\}$, and it is enough to show that there is a solution in $A[p]$ to this system of equations.

The group $A[p]$ is clearly bounded, so by \cite[Theorem 27.5]{Fu1} it is algebraically compact. Thus, using \cite[Theorem 38.1]{Fu1}, it is enough to show that there is a solution in $A[p]$ to every finite subsystem of the above system of equations. So suppose $s_i<t_i$ in $\cT$ and $a_i\in X(s_i)\setminus\{*\}$ for every $i=1,\dots,n$. Since $\cT$ is directed, we can find $r\in\cT$ such that $r\geq t_i$ for every $i=1,\dots,n$. It is enough to show that there exists
$$h\in\prod_{s\in\cT}(X(s)\wedge A)[p]$$
such that for every $s<t\leq r$ in $\cT$ we have
$$h(s)=\phi_{t,s}(h(t))+\phi_{t,s}(g(t))-g(s).$$

If $s\leq r$ we define
$$h(s):=\phi_{r,s}(g(r))-g(s)\in X(s)\wedge A.$$
We have
$$ph(s)=p\phi_{r,s}(g(r))-pg(s)=\phi_{r,s}(pg(r))-pg(s)= \phi_{r,s}(f(r))-f(s)=0,$$
so $h(s)\in (X(s)\wedge A)[p]$.
For $s\nleq r$ we define $h(s):=0\in (X(s)\wedge A)[p]$, so we have defined
$$h\in\prod_{s\in\cT}(X(s)\wedge A)[p].$$
Now, let $s<t\leq r$ in $\cT$. Then we have
$$\phi_{t,s}(h(t))+\phi_{t,s}(g(t))-g(s)= \phi_{t,s}(\phi_{r,t}(g(r))-g(t))+\phi_{t,s}(g(t))-g(s)= $$
$$\phi_{t,s}(\phi_{r,t}(g(r)))-g(s)=\phi_{r,s}(g(r))-g(s)= h(s),$$
which finishes our proof.
  \item Follows easily from (1).
  \item The result for $H$ follows easily from the result for $G$, since the image of a cotorsion group is cotorsion (see, for example, \cite[page 233]{Fu1}). Thus, it is enough to show that $G(X,A)$ is cotorsion. Let us decompose $A$ into its divisible and reduced parts $A=D\oplus R$. Then, by Lemma \ref{l:oplus}, we have
  $$G(X,A)\cong G(X,D)\oplus G(X,R).$$
  By Theorem \ref{t:p divisible}, we have that $G(X,D)$ is divisible and, in particular, cotorsion. Since the product of cotorsion groups is cotorsion (\cite[page 233]{Fu1}), we are left to show that $G(X,R)$ is cotorsion. For every $s\in\cT$ the set $X(s)$ is finite so the group
  $$X(s)\wedge R=R^{X(s)\setminus\{*\}}$$
  is a product of reduced cotorsion groups and thus reduced cotorsion. By \cite[page 233]{Fu1} the limit
  $$G(X,R)={\lim}_\cT (X\wedge R)$$
  is also reduced cotorsion.
\end{enumerate}
\end{proof}

The following example shows that even for diagrams of finite sets the cokernel $H$ can easily be non trivial. It will also be useful for us for constructing a counterexample in Theorem \ref{t:finite counter}.
\begin{example}\label{e:finite}
  Let $N$ be an infinite set and let $\cT$ be the poset of all finite nonempty subsets of $N$, ordered by inclusion. Clearly $\cT$ is directed and $|\cT|=|N|$. We define a diagram $X:\cT\to\Set^f_*$, by letting $X(s):=s_+$, and if $s\subseteq t$ then the induced map $X(t)\to X(s)$ is the identity on $X(s)$ and $*$ on the rest. It is not hard to verify that in this case we have natural isomorphisms:
  $$G(X,A)\cong \prod_{\alpha\in N}A,$$
  $$K(X,A)\cong \bigoplus_{\alpha\in N}A,$$
  $$H(X,A)\cong \prod_{\alpha\in N}A/\bigoplus_{\alpha\in N}A.$$
\end{example}

\section{A countable directed poset}
In this section we assume that our diagram directed poset $\cT$ is countable. In this case it is not hard to show that there exists a cofinal functor $\NN\lrar\cT$, where $\NN$ is the poset of natural numbers. Thus we can assume without loss of generality that $\cT=\NN$.

So, throughout this section, we let $A$ be an abelian group and $X:\NN\lrar \Set_*$ a tower of pointed sets. For every $n\in\NN$ we let $\phi_n:=X(n\to n-1):X(n)\to X(n-1)$ be the structure map. Recall that we have the functors:
$$G(X,-)={\lim}_\NN (X\wedge (-)):\AbC\to\AbC,$$
$$K(X,-)=({\lim}_\NN X)\wedge (-):\AbC\to\AbC,$$
$$H(X,-)=G(X,-)/K(X,-):\AbC\to\AbC.$$

\begin{prop}\label{p:exact}
  The functors $G(X,-)$ and $H(X,-)$ are left exact. If the diagram $X$ satisfies the Mittag-Leffler condition then the functors $G(X,-)$ and $H(X,-)$ are exact.
\end{prop}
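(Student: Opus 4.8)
The plan is to deduce the proposition from three ingredients: (i) for a fixed pointed set $Y$ the functor $Y\wedge(-)\colon\AbC\to\AbC$ is exact; (ii) the inverse limit functor on $\NN$-towers of abelian groups is left exact, and is exact on short exact sequences whose left-hand tower is Mittag-Leffler (equivalently, a Mittag-Leffler $\NN$-tower has vanishing ${\lim}^1$); and (iii) the snake lemma.

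First I would note that $Y\wedge(-)=\bigoplus_{Y\setminus\{*\}}(-)$ is a coproduct over a fixed index set of copies of the identity functor, hence exact. In particular $K(X,-)=({\lim}_\NN X)\wedge(-)$ is exact, and, applied levelwise, $X\wedge(-)$ sends a short exact sequence $0\to A'\to A\to A''\to 0$ in $\AbC$ to a short exact sequence $0\to X\wedge A'\to X\wedge A\to X\wedge A''\to 0$ of $\NN$-towers. Composing with the left exact functor ${\lim}_\NN$ shows $G(X,-)={\lim}_\NN(X\wedge(-))$ is left exact. For $H$ I would apply the snake lemma to the diagram with exact rows $K(X,A')\to K(X,A)\to K(X,A'')\to 0$ and $0\to G(X,A')\to G(X,A)\to G(X,A'')$ in which all three vertical maps are $\rho$: by Lemma~\ref{l:inj} each $\rho$ is injective and by construction $\coker\rho=H(X,-)$, so the six-term snake sequence collapses to $0\to H(X,A')\to H(X,A)\to H(X,A'')$, giving left exactness of $H(X,-)$.

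Now assume $X$ is Mittag-Leffler; I claim $X\wedge A$ is then a Mittag-Leffler tower of abelian groups for every $A$. Indeed, for $k\ge n$ the map $(X\wedge A)(k)\to(X\wedge A)(n)$ carries the summand indexed by $t\in X(k)\setminus\{*\}$ isomorphically onto the summand indexed by $\phi(t)$ when $\phi(t)\ne *$ (where $\phi=X(k\to n)$) and to $0$ otherwise, so its image is exactly $\bigoplus_{s}A$ with $s$ ranging over $\im(X(k\to n))\setminus\{*\}$. Hence a level stabilising the images of $X$ over $n$ also stabilises the images of $X\wedge A$ over $n$, so the Mittag-Leffler condition passes to $X\wedge A$. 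Since a Mittag-Leffler $\NN$-tower has trivial first derived limit ${\lim}^1$, the derived inverse limit sequence of $0\to X\wedge A'\to X\wedge A\to X\wedge A''\to 0$ yields surjectivity of $G(X,A)\to G(X,A'')$, i.e. $G(X,-)$ is exact. Re-running the snake lemma of the previous paragraph, now with the bottom row completed to the short exact sequence $0\to G(X,A')\to G(X,A)\to G(X,A'')\to 0$, gives the exact sequence $0\to H(X,A')\to H(X,A)\to H(X,A'')\to 0$, so $H(X,-)$ is exact.

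The only step carrying real content is the Mittag-Leffler part: checking via the image computation above that $(-)\wedge A$ preserves the Mittag-Leffler condition, and invoking the classical vanishing of ${\lim}^1$ for Mittag-Leffler $\NN$-towers of abelian groups (alternatively, one proves the needed surjectivity of $G(X,A)\to G(X,A'')$ by hand, lifting a compatible system from $X\wedge A''$ to $X\wedge A$ levelwise and correcting inductively along the tower, the corrections lying in $X\wedge A'$ and converging by Mittag-Leffler). Everything else — exactness of $Y\wedge(-)$, left exactness of inverse limits, and two applications of the snake lemma — is purely formal.
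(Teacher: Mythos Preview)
Your proof is correct and follows essentially the same route as the paper: exactness of $Y\wedge(-)$ levelwise, the $\lim/\lim^1$ six-term sequence for $G$, and two applications of the snake lemma for $H$. Your explicit verification that the Mittag-Leffler condition passes from $X$ to $X\wedge A$ via the image computation $\im\big((X\wedge A)(k)\to(X\wedge A)(n)\big)=\bigoplus_{\im(X(k\to n))\setminus\{*\}}A$ is a detail the paper leaves implicit when it asserts $\lim^1_\NN(X\wedge A)=0$.
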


\begin{proof}
  It is easy to see that $G(X,-)$ and $H(X,-)$ are additive. Let $0\to A\to B\to C\to 0$ be an exact sequence in $\AbC$.

  Let $t\in\cT$. The $\ZZ$-module $\bigoplus_{a\in X(t)\setminus\{*\}}\ZZ$ is flat, so the following sequence is also exact
  $$0\to A\otimes (\bigoplus_{a\in X(t)\setminus\{*\}}\ZZ)\to B\otimes(\bigoplus_{a\in X(t)\setminus\{*\}}\ZZ)\to C\otimes(\bigoplus_{a\in X(t)\setminus\{*\}}\ZZ)\to 0.$$
  This sequence is exactly
  $$0\to X(t)\wedge A\to X(t)\wedge B\to X(t)\wedge C\to 0.$$
  Combining these exact sequences we obtain an exact sequence in $\AbC^\cT$
  $$0\to X\wedge A\to X\wedge B\to X\wedge C\to 0.$$
  It follows (see, for instance, {\cite[Lemma VI.2.12]{GJ}}) that we have an exact sequence in $\AbC$
  $$0\to {\lim}_\cT (X\wedge A)\to {\lim}_\cT (X\wedge B)\to {\lim}_\cT (X\wedge C)\to {\lim}^1_\cT (X\wedge A).$$
  This shows that $G(X,-)$ is left exact.

  Applying the same argument as above for $\lim_\cT X$ instead of $X(t)$, we obtain an exact sequence
  $$0\to ({\lim}_\cT X)\wedge A\to ({\lim}_\cT X)\wedge B\to ({\lim}_\cT X)\wedge C\to 0.$$
  We thus obtain the following commutative diagram in $\AbC$
  $$\xymatrix{0\ar[r] & ({\lim}_\cT X)\wedge A\ar[r]\ar@{^{(}->}[d] & ({\lim}_\cT X)\wedge B \ar[r]\ar@{^{(}->}[d] & ({\lim}_\cT X)\wedge C\ar[r]\ar@{^{(}->}[d] & 0\\
  0\ar[r] & {\lim}_\cT (X\wedge A)\ar[r] & {\lim}_\cT (X\wedge B)\ar[r] & {\lim}_\cT (X\wedge C) & }$$
  where every row is exact.
  By the snake lemma we have an exact sequence
  $$0\to H(X,A)\to H(X,B)\to H(X,C),$$
which shows that $H(X,-)$ is left exact.

  In the case that $X$ satisfies the Mittag-Leffler condition we have $\lim^1_\cT (X\wedge A)=0$, so we have an exact sequence
  $$0\to {\lim}_\cT (X\wedge A)\to {\lim}_\cT (X\wedge B)\to {\lim}_\cT (X\wedge C)\to 0.$$
  By the snake lemma, we also have an exact sequence
  $$0\to H(X,A)\to H(X,B)\to H(X,C)\to 0.$$
\end{proof}

\begin{cor}\label{c:exact}
If $B\subseteq A$ is a subgroup then we have a natural inclusion
$$H(X,B)\hookrightarrow H(X,A)$$
and if $X$ satisfies the Mittag-Leffler condition we also have a natural isomorphism
$$H(X,A/B)\cong H(X,A)/H(X,B).$$
  For every $n\geq 1$ the natural inclusion $H(X,A[n])\hookrightarrow H(X,A)$ gives an isomorphism
  $$H(X,A[n])\cong H(X,A)[n]$$
  and if $X$ satisfies the Mittag-Leffler condition the natural inclusion $H(X,nA)\hookrightarrow H(X,A)$ gives an isomorphism
  $$H(X,nA)\cong nH(X,A).$$
  The same results hold for $G$ instead of $H$.
\end{cor}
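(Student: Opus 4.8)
The plan is to deduce the entire statement formally from Proposition \ref{p:exact}, using only that $G(X,-)$ and $H(X,-)$ are additive and left exact, and exact when $X$ satisfies the Mittag-Leffler condition. I will carry out the argument for $H$; the argument for $G$ is word-for-word the same, since Proposition \ref{p:exact} was proved for $G$ as well and all the short exact sequences of groups of the form $Y\wedge(-)$ it rests on are already in hand.

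First, given a subgroup $B\subseteq A$, I would apply $H(X,-)$ to the short exact sequence $0\to B\to A\to A/B\to 0$. Left exactness yields an exact sequence $0\to H(X,B)\to H(X,A)\to H(X,A/B)$ in which, by functoriality, the first map is the one induced by $B\hookrightarrow A$; its injectivity is exactly the asserted natural inclusion $H(X,B)\hookrightarrow H(X,A)$. When $X$ satisfies the Mittag-Leffler condition, Proposition \ref{p:exact} upgrades this to a short exact sequence $0\to H(X,B)\to H(X,A)\to H(X,A/B)\to 0$, which identifies $H(X,A/B)$ with $H(X,A)/H(X,B)$ through the induced map.

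For the statements on $A[n]$ and $nA$, the extra ingredient is that, being additive, $H(X,-)$ sends the multiplication-by-$n$ endomorphism of any abelian group $C$ (which equals $\id_C+\dots+\id_C$) to multiplication by $n$ on $H(X,C)$. Since a left exact functor preserves kernels, applying $H(X,-)$ to $A[n]=\ker(A\xrightarrow{\;\cdot n\;}A)$ gives $H(X,A[n])=\ker\bigl(H(X,A)\xrightarrow{\;\cdot n\;}H(X,A)\bigr)=H(X,A)[n]$, and by naturality this identification is realized by the inclusion of the previous paragraph. When $X$ satisfies the Mittag-Leffler condition, $H(X,-)$ is exact, hence preserves the epi--mono factorization $A\twoheadrightarrow nA\hookrightarrow A$ of $\cdot n$; applying $H(X,-)$ turns this into the epi--mono factorization of $\cdot n\colon H(X,A)\to H(X,A)$, so the natural image of $H(X,nA)$ in $H(X,A)$ is precisely $nH(X,A)$.

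There is no real obstacle here: the whole corollary is a formal consequence of Proposition \ref{p:exact} together with additivity. The only points requiring a moment's care are bookkeeping ones --- verifying that the maps supplied abstractly by (left) exactness coincide with the natural induced maps, and that $H(X,-)$ and $G(X,-)$ send $\cdot n$ to $\cdot n$ --- and both follow at once from functoriality and additivity.
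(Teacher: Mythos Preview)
Your proposal is correct and follows essentially the same approach as the paper: deduce everything from Proposition~\ref{p:exact} by applying $H(X,-)$ (respectively $G(X,-)$) to the short exact sequences $0\to B\to A\to A/B\to 0$ and $0\to A[n]\to A\xrightarrow{n\cdot} nA\to 0$. You are a bit more explicit than the paper about the role of additivity in identifying $H(X,n\cdot)$ with multiplication by $n$, and you phrase the last two isomorphisms in terms of preserving kernels and epi--mono factorizations rather than directly reading them off the resulting exact sequence, but this is only a cosmetic difference.
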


\begin{proof}
We have an exact sequence
$$0\lrar B\lrar A\lrar A/B\lrar 0.$$
By Proposition \ref{p:exact} the following sequence is exact:
$$0\lrar H(X,B)\lrar H(X,A)\lrar H(X,A/B).$$
This gives the desired natural inclusion.
If $X$ satisfies the Mittag-Leffler condition, Proposition \ref{p:exact} tells us that the following sequence is also exact:
$$0\lrar H(X,B)\lrar H(X,A)\lrar H(X,A/B)\lrar 0,$$
which gives the first isomorphism.
The rest follows in a similar way starting from the exact sequence
$$0\lrar A[n]\lrar A\xrightarrow{n\cdot(-)}nA\lrar 0.$$
\end{proof}

\begin{cor}\label{c:almost divisible}
Suppose $X$ satisfies the Mittag-Leffler condition. Then we have the following:
\begin{enumerate}
  \item If $A$ is $p$-divisible then $G(X,A)$ and $H(X,A)$ are $p$-divisible.
  \item If $A$ is divisible then $G(X,A)$ and $H(X,A)$ are divisible.
  \item If $A$ is almost divisible then $G(X,A)$ and $H(X,A)$ are almost divisible.
\end{enumerate}
\end{cor}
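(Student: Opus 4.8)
The plan is to reduce everything to Corollary~\ref{c:exact} and Lemma~\ref{l:oplus}, so that essentially no new computation is required. For (1), recall that $A$ being $p$-divisible means exactly $pA=A$. Since $X$ satisfies the Mittag-Leffler condition, Corollary~\ref{c:exact} supplies natural isomorphisms $G(X,pA)\cong pG(X,A)$ and $H(X,pA)\cong pH(X,A)$; substituting $pA=A$ turns these into $G(X,A)\cong pG(X,A)$ and $H(X,A)\cong pH(X,A)$, which is precisely the assertion that $G(X,A)$ and $H(X,A)$ are $p$-divisible. Part (2) is then immediate, since an abelian group is divisible iff it is $p$-divisible for every prime $p$ (so $nA=A$ for all $n$ follows by factoring $n$ into primes), and one applies (1) to each $p$.

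For (3), use Corollary~\ref{c:AD is AC} to write $A=D\oplus B$ with $D$ divisible and $B$ bounded, say $mB=0$. By Lemma~\ref{l:oplus} we have $G(X,A)\cong G(X,D)\oplus G(X,B)$ and $H(X,A)\cong H(X,D)\oplus H(X,B)$. By part (2), the summands $G(X,D)$ and $H(X,D)$ are divisible. On the other hand, each $X(t)\wedge B=\bigoplus_{X(t)\setminus\{*\}}B$ is $m$-bounded, hence so is the limit $G(X,B)={\lim}_\NN(X\wedge B)$ as a subgroup of the product, and so is its quotient $H(X,B)$; equivalently, one may invoke Theorem~\ref{t:p divisible gen}(1) for the cokernel $H(X,B)$, using $\ker\rho=0$ from Lemma~\ref{l:inj}. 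Therefore $G(X,A)$ and $H(X,A)$ are each a direct sum of a divisible group and a bounded group, i.e.\ almost divisible.

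There is no genuine obstacle in this argument; the two points to keep an eye on are that the divisible/bounded decomposition of $A$ must be transported through \emph{both} functors $G(X,-)$ and $H(X,-)$, which is exactly the content of Lemma~\ref{l:oplus}, and that the Mittag-Leffler hypothesis is used only for the divisibility half of the argument — it is what makes the map $G(X,A)\to G(X,nA)$ in Corollary~\ref{c:exact} surjective, via ${\lim}^1(X\wedge A[n])=0$ — whereas the boundedness half requires no hypothesis on $X$ at all. One could also, for each of (1)--(3), deduce the statement for $H$ directly from the one for $G$, since $H(X,A)$ is a quotient of $G(X,A)$ and all the properties in question pass to quotients.
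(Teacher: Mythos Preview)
Your proof is correct and follows essentially the same route as the paper: part (1) via Corollary~\ref{c:exact} using $pA=A$, part (2) immediately from (1), and part (3) by decomposing $A=D\oplus B$ via Corollary~\ref{c:AD is AC}, splitting with Lemma~\ref{l:oplus}, and handling the bounded summand through Theorem~\ref{t:p divisible gen}(1). The only cosmetic difference is that the paper writes out just the $H$ case in (1) (with $G$ implicit from the last sentence of Corollary~\ref{c:exact}), whereas you treat both explicitly.
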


\begin{proof}
\begin{enumerate}
  \item We have $A=pA$ and since $X$ satisfies the Mittag-Leffler condition Corollary \ref{c:exact} gives
  $$H(X,A)= H(X,p A)= p H(X,A).$$
  \item Follows easily from (1).
  \item Using Corollary \ref{c:AD is AC}, it follows from (2), Theorem \ref{t:p divisible gen} (1) and Lemma \ref{l:oplus}.
\end{enumerate}
\end{proof}

\begin{cor}\label{c:mdiv}
  Let $m\geq 2$ and $h\in {\lim}_\NN (X\wedge A)$. Suppose that $X$ satisfies the Mittag-Leffler condition and that for every $n\in\NN$ and $a\in \supp(h(t))$ we have that $m|h(n)_a$. Then $m|[h]$ in $H$.
\end{cor}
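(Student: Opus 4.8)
The plan is to deduce this directly from Corollary~\ref{c:exact}. Recall that Corollary~\ref{c:exact} asserts, under the Mittag-Leffler hypothesis, that the natural inclusion $H(X,mA)\hookrightarrow H(X,A)$ has image exactly $mH(X,A)$. So it suffices to show that $[h]$ lies in the image of that inclusion, and for this it suffices to show that $h$ itself already lies in $G(X,mA)={\lim}_\NN(X\wedge mA)$, viewed as a subgroup of $G(X,A)={\lim}_\NN(X\wedge A)$.

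First I would unwind the hypothesis. For $a\notin\supp(h(n))$ we have $h(n)_a=0\in mA$ trivially, so the assumption that $m\mid h(n)_a$ for every $n\in\NN$ and every $a\in\supp(h(n))$ says precisely that every coordinate of $h(n)\in X(n)\wedge A=\bigoplus_{X(n)\setminus\{*\}}A$ lies in the subgroup $mA\subseteq A$. Since the functor $X(n)\wedge(-)=\bigoplus_{X(n)\setminus\{*\}}(-)$ is exact (the same flatness fact used in the proof of Proposition~\ref{p:exact}), $X(n)\wedge mA$ is identified with the subgroup $\{y\in X(n)\wedge A:\ y_a\in mA\ \text{for all}\ a\}$ of $X(n)\wedge A$; hence $h(n)\in X(n)\wedge mA$ for every $n$. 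As $h$ is a coherent element of the tower $\{X(n)\wedge A\}$, and the inclusion $mA\hookrightarrow A$ induces a levelwise injective map of towers $\{X(n)\wedge mA\}\hookrightarrow\{X(n)\wedge A\}$ (so the transition maps restrict to the subtower), this shows $h\in{\lim}_\NN(X\wedge mA)=G(X,mA)$.

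Next I would track $[h]$ through functoriality. The inclusion $mA\hookrightarrow A$ induces compatible maps $G(X,mA)\to G(X,A)$ and $K(X,mA)\to K(X,A)$, hence a map $H(X,mA)\to H(X,A)$, which on classes sends the class of $h$ in $H(X,mA)=G(X,mA)/K(X,mA)$ to the class of $h$ in $H(X,A)=G(X,A)/K(X,A)$, which is exactly $[h]$. Therefore $[h]$ lies in the image of $H(X,mA)\to H(X,A)$, which by Corollary~\ref{c:exact} (this is where the Mittag-Leffler hypothesis enters) equals $mH(X,A)$. Thus $[h]\in mH(X,A)$, i.e.\ $m\mid[h]$ in $H$, as desired.

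There is no real obstacle here; the argument is essentially bookkeeping together with one citation. The only step requiring mild care is the identification of the hypothesis with the statement $h\in G(X,mA)$ --- that is, recognizing that ``all coordinates of every $h(n)$ are divisible by $m$'' is the same as ``$h$ lies in the subgroup coming from $mA\hookrightarrow A$'' --- after which Corollary~\ref{c:exact} does all the work.
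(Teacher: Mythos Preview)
Your proof is correct and follows essentially the same approach as the paper: observe that the hypothesis means $h\in{\lim}_\NN(X\wedge mA)=G(X,mA)$, then invoke Corollary~\ref{c:exact} (using Mittag--Leffler) to get $[h]\in H(X,mA)\cong mH(X,A)$. You simply supply more detail on the identification $h\in G(X,mA)$ and on the functoriality, which the paper compresses into ``it is easy to see.''
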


\begin{proof}
  It is easy to see that $h\in\lim_\NN(X\wedge(mA))$, so using Corollary \ref{c:exact} we obtain
  $$[h]\in{\lim}_\NN(X\wedge(mA))/({\lim}_\NN X)\wedge (mA)=H(X,mA)\cong mH(X,A).$$
\end{proof}

Our goal now is to obtain Theorem \ref{t:alg compact}. For this we need to first prove a stronger version of Corollary \ref{c:mdiv}, namely:

\begin{prop}\label{p:devide}
  Let $m\geq 2$, $h\in {\lim}_\NN (X\wedge A)$ and $x_1,\dots,x_l\in {\lim}_\NN X$. Suppose that for every $n\in\NN$ and every $a\in \supp(h(n))\setminus\{x_1(n),\dots,x_l(n)\}$ we have that $m|h(n)_a$. Then $m|[h]$ in $H$.
\end{prop}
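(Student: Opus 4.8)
The plan is to reduce everything to Corollary~\ref{c:mdiv} after first normalizing the trajectories $n\mapsto x_i(n)$ and then subtracting a suitable element of $K(X,A)$.

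\emph{Normalization.} Any $x_i$ equal to the basepoint of $\lim_\NN X$, or equal to some $x_j$ with $j\neq i$, can be deleted from the list: since $\supp(h(n))$ never contains the basepoint, this does not change the set $\{x_1(n),\dots,x_l(n)\}$ occurring in the hypothesis. So I may assume the $x_i$ are pairwise distinct and different from the basepoint. Because each structure map $\phi_n$ is pointed, the set $\{n:x_i(n)\neq *\}$ is upward closed in $\NN$, hence equals $[N_i,\infty)$; similarly, for $i\neq j$ the set $\{n:x_i(n)\neq x_j(n)\}$ is upward closed and nonempty, hence equals $[N_{ij},\infty)$. Let $N$ be the largest of these finitely many numbers. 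The sub-poset $\{n\ge N\}$ is cofinal in $\NN$, so restricting the tower to it leaves $\lim_\NN X$, the groups $G,K,H$, the Mittag--Leffler condition, and the divisibility of $[h]$ by $m$ all unchanged, while the hypothesis on $h$ persists. After relabeling I may therefore assume that for every $n\in\NN$ the elements $x_1(n),\dots,x_l(n)$ are pairwise distinct points of $X(n)\setminus\{*\}$; write $B(n):=\{x_1(n),\dots,x_l(n)\}$.

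\emph{Stabilization modulo $m$.} Next I would show that $h(n)_{x_i(n)}\bmod m$ does not depend on $n$. Fix $i$ and $n\ge 1$ and put $a':=x_i(n-1)\neq *$. Since $h\in\lim_\NN(X\wedge A)$, one has $h(n-1)_{a'}=\sum_{b\in\phi_n^{-1}(a')}h(n)_b$, a finite sum because $\supp(h(n))$ is finite. The fibre $\phi_n^{-1}(a')$ contains $x_i(n)$, and any other element $b$ of it lies outside $B(n)$: if $b=x_j(n)$ then $\phi_n(b)=x_j(n-1)=a'=x_i(n-1)$, and distinctness at level $n-1$ forces $j=i$, hence $b=x_i(n)$. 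So every $b\in\phi_n^{-1}(a')\setminus\{x_i(n)\}$ lies in $X(n)\setminus\{*\}$ but not in $B(n)$, whence $m\mid h(n)_b$ by hypothesis (trivially if $b\notin\supp(h(n))$). Therefore $h(n-1)_{x_i(n-1)}\equiv h(n)_{x_i(n)}\pmod m$, and the residue is constant in $n$. I then choose $v_i\in A$ with $v_i\equiv h(n)_{x_i(n)}\pmod m$ for all $n$.

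\emph{Conclusion.} Set $k:=\sum_{i=1}^l x_iv_i\in K(X,A)$, so that $[h-k]=[h]$ in $H$. For every $n$ and every $a\in X(n)\setminus\{*\}$: if $a=x_i(n)$ then $(h-k)(n)_a=h(n)_{x_i(n)}-v_i\in mA$; and if $a\notin B(n)$ then $(h-k)(n)_a=h(n)_a$, which lies in $mA$ by hypothesis (or is $0$). Hence every coefficient of $(h-k)(n)$ is divisible by $m$ for every $n$, and Corollary~\ref{c:mdiv} applied to $h-k$ gives $m\mid[h-k]=[h]$ in $H$. The step I expect to require the most care is the normalization: verifying that the trajectories $n\mapsto x_i(n)$ really do separate and become nonbasepoint past a finite stage, and that passing to the cofinal tail $\{n\ge N\}$ preserves $H$ and the Mittag--Leffler condition. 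Once the $x_i(n)$ are distinct at every level, the remaining computation is short.
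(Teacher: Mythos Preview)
Your reduction is clean and correct up to the last step: after normalising so that the $x_i(n)$ are pairwise distinct and non-basepoint for all $n$, the residues $h(n)_{x_i(n)}\bmod m$ are indeed constant in $n$, and subtracting $k=\sum_i x_iv_i\in K(X,A)$ lands you in the situation where every coefficient of $(h-k)(n)$ lies in $mA$. This is a genuinely different and more conceptual route than the paper's, which builds an approximate $m$-th root $g$ by one recursive construction that handles the exceptional trajectories and the divisibility simultaneously.

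The gap is at the final invocation of Corollary~\ref{c:mdiv}: that corollary assumes the Mittag--Leffler condition, while Proposition~\ref{p:devide} does \emph{not} (and the paper's proof does not use it). Your aside about ``preserving the Mittag--Leffler condition'' suggests you took it as a standing hypothesis; it is not one in this section. So as written you have proved the proposition only under Mittag--Leffler---which is admittedly the only case used later, in Theorem~\ref{t:alg compact}---but not as stated. To close the gap you must show directly that $h'\in G(X,mA)$ implies $[h']\in mH(X,A)$ without Mittag--Leffler. The proof of Corollary~\ref{c:mdiv} goes through the identification $H(X,mA)\cong mH(X,A)$ of Corollary~\ref{c:exact}, which genuinely needs Mittag--Leffler via a $\lim^1$ vanishing. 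What works instead is to build $g\in G(X,A)$ with $mg=h'$ level by level: choose $m$-th roots freely, then in each fibre of $\phi_{n+1}$ correct one coordinate to force $\phi_{n+1}(g(n+1))=g(n)$. That is precisely the paper's recursion specialised to $l=0$; it is shorter than the general case, but it is the same idea, and your reduction does not bypass it.
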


\begin{proof}
 We define $g(n)\in X(n)\wedge A$ for every $n\in \NN$, recursively.

 For $n=0$ let $a\in X(0)$. If
 $$a\in\supp(h(0))\setminus \{x_1(0),\dots,x_l(0)\}$$
we choose $g(0)_a\in A$ such that $h(0)_a=mg(0)_a$, otherwise we define $g(0)_a:=0$.

Let $n\in\NN$ and suppose we have defined $g(i)\in X(i)\wedge A$ for every $i\leq n$ such that
\begin{enumerate}
  \item $\phi_{n}(g(n))=g(n-1)$,
  \item $\supp(mg(n)-h(n))\subseteq \{x_1(n),\dots,x_l(n)\}$,
  \item $\supp(g(n))\subseteq \supp(h(n))\cup \{x_1(n),\dots,x_l(n)\}$.
\end{enumerate}

We now define $g(n+1)\in X(n+1)\wedge A$.
  Let $s\in X(n)$ such that
  $$W_s:=(\supp(h(n+1))\cup \{x_1(n+1),\dots,x_l(n+1)\})\cap\phi_{n+1}^{-1}(s)\neq\phi.$$
  If
  $$\{x_1(n+1),\dots,x_l(n+1)\}\cap\phi_{n+1}^{-1}(s)\neq\phi$$
  we choose
  $$a_0\in\{x_1(n+1),\dots,x_l(n+1)\}\cap\phi_{n+1}^{-1}(s)\subseteq W_s,$$
  otherwise, we choose any $a_0\in W_s$.

  For
  $$a\in (W_s\setminus\{a_0\})\setminus \{x_1(n+1),\dots,x_l(n+1)\}$$
we choose $g(n+1)_a\in A$ such that $h(n+1)_a=mg(n+1)_a$, and for
$$a\in (W_s\setminus\{a_0\})\cap \{x_1(n+1),\dots,x_l(n+1)\}$$
we define $g(n+1)_a:=0$.
We now define
$$g(n+1)_{a_0}:=g(n)_s-\sum_{a\in W_s\setminus\{a_0\}}g(n+1)_a.$$

This defines $g(n+1)_a$ for every $a\in W_s$. Ranging over all possible $s$ we have thus defined $g(n+1)_a$ for every
$$a\in\supp(h(n+1))\cup \{x_1(n+1),\dots,x_l(n+1)\}.$$
For the remaining $a\in X(n+1)$ we define $g(n+1)_a=0$.
This defines $g(n+1)\in X(n+1)\wedge A$ and it is clear from the definition that
$$\supp(g(n+1))\subseteq \supp(h(n+1))\cup \{x_1(n+1),\dots,x_l(n+1)\}.$$
Using (3) in the induction hypothesis it is also easy to see that $\phi_{n+1}(g(n+1))=g(n).$

Now let $a\in \supp(mg(n+1)-h(n+1))$, so that $mg(n+1)_a\neq h(n+1)_a$, and assume that $a\notin\{x_1(n+1),\dots,x_l(n+1)\}$ (to derive a contradiction).

Clearly
$$a\in\supp(h(n+1))\cup \{x_1(n+1),\dots,x_l(n+1)\},$$
so there exists $s\in X(n)$ such that $a\in W_s$.

If $a\neq a_0$ then $a\in (W_s\setminus\{a_0\})\setminus \{x_1(n+1),\dots,x_l(n+1)\}$ and according to the construction above we have $h(n+1)_a=mg(n+1)_a$, which is a contradiction.

So suppose $a=a_0$. Then
$$a_0\notin \{x_1(n+1),\dots,x_l(n+1)\}$$ and thus
\begin{equation}\label{e:phi}
\{x_1(n+1),\dots,x_l(n+1)\}\cap\phi_{n+1}^{-1}(s)=\phi.
\end{equation}
According to the construction above for every $r\in W_s\setminus\{a_0\}$ we have $h(n+1)_r=mg(n+1)_r$.

If $s=x_i(n)$ for some $1\leq i \leq l$ then $\phi_{n+1}(x_i(n+1))=x_i(n)=s$ and we obtain $x_i(n+1)\in \phi^{-1}_{n+1}(s)$ contradicting (\ref{e:phi}) above. Thus,
$s\notin \{x_1(n),\dots,x_l(n)\}$ and from (2) in the induction hypothesis it follows that $h(n)_s=mg(n)_s$. We thus obtain
$$mg(n+1)_{a_0}=mg(n)_s-\sum_{r\in W_s\setminus\{a_0\}}mg(n+1)_r=h(n)_s-\sum_{r\in W_s\setminus\{a_0\}}h(n+1)_r=h(n+1)_{a_0},$$
which is again a contradiction.

Thus $a\in\{x_1(n+1),\dots,x_l(n+1)\}$ and we obtain
$$\supp(mg(n+1)-h(n+1))\subseteq \{x_1(n+1),\dots,x_l(n+1)\},$$
which finishes the recursive definition.

  Since $\phi_{n+1}(g(n+1))=g(n)$ for every $n\in\NN$ we have that $$g:=(g(n))_{n\in\NN}\in {\lim}_\NN (X\wedge A).$$
  Now $mg-h\in {\lim}_\NN (X\wedge A)$ and for every $n\in\NN$ we have
  $$|mg(n)-h(n)|=|\supp(mg(n)-h(n))|\leq l.$$
  It follows from Proposition \ref{p:bounded} that $mg-h\in ({\lim}_\NN X)\wedge A$. Thus $m[g]-[h]=[mg-h]=0$ and $m[g]=[h]$.
\end{proof}

\begin{thm}\label{t:alg compact}
  If $X$ satisfies the Mittag-Leffler condition then the group $H$ is algebraically compact.
\end{thm}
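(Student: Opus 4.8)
The plan is to show that $H = H(X,A)$ is cotorsion with Ulm length $\leq 1$. The key structural input is Proposition~\ref{p:devide}, which tells us that divisibility of elements of $H$ can be detected "one branch at a time" — an element $[h]$ is divisible by $m$ as soon as the coordinates $h(n)_a$ are divisible by $m$ away from finitely many threads $x_1,\dots,x_l \in {\lim}_\NN X$. I would first reduce to the reduced case: decompose $A = D \oplus R$ into divisible and reduced parts; by Lemma~\ref{l:oplus}, $H(X,A) \cong H(X,D) \oplus H(X,R)$, and by Corollary~\ref{c:almost divisible}(2) the summand $H(X,D)$ is divisible, hence algebraically compact. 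So it suffices to treat $A$ reduced, and then it is enough to prove that $H = H(X,A)$ is cotorsion and has vanishing first Ulm subgroup $H^1 := \bigcap_{n\geq 1} nH$.

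**Vanishing of the first Ulm subgroup.**
This is where Proposition~\ref{p:devide} does the real work, and I expect it to be the main obstacle to package correctly. Take $[h] \in \bigcap_{n} nH$; I want to show $[h] = 0$, i.e.\ that $\{|h(n)| : n\in\NN\}$ is bounded (Proposition~\ref{p:bounded}). The idea: for each $m$, divisibility by $m!$ (say) forces, after subtracting an element of $K$, that all but a bounded number of coordinates of $h(n)$ are divisible by $m$; iterating, the "large-multiplicity" part of $h$ would have to live in $\bigcap_m mA$ coordinatewise, which is $0$ since $A$ is reduced — but one must be careful, because the bounded exceptional set of threads can grow with $m$, and because divisibility in $H$ is not literally coordinatewise divisibility in $A$. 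The clean route is: if $|h(n)|$ is unbounded, pass to the subgroup generated by $[h]$ together with finitely many $x_k v$; using that $A$ is reduced and an infinite strictly increasing sequence of supports, build by a diagonal argument a prime $p$ and infinitely many coordinates where $h$ is not $p$-divisible even modulo any finite set of threads, contradicting $[h] \in pH$ via the contrapositive of Proposition~\ref{p:devide}. I would isolate this as a lemma: \emph{if $A$ is reduced and $\{|h(n)|\}$ is unbounded, then $[h] \notin \bigcap_n nH$}.

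**Cotorsion.**
For cotorsion it suffices, by Proposition~\ref{p:exact} ($G(X,-)$ and $H(X,-)$ are exact under Mittag-Leffler) and the fact that $\Ext(\QQ,-)$ vanishes on cotorsion groups, to show $\Ext(\QQ, H(X,A)) = 0$. One approach: reduce to $A$ reduced as above, then realize $H(X,R)$ as a quotient of $G(X,R)$; if $R$ were itself cotorsion we could cite Theorem~\ref{t:p divisible}, but $R$ need not be — so instead I would work directly with solvability of systems of equations. Concretely, a cotorsion group is characterized (Fuchs, \cite[Prop.~54.1 or thereabouts]{Fu1}) by solvability of every countable system $m_k x_{k+1} = x_k - a_k$ type "purity" systems; given such a system over $H$, lift the $a_k$ to $G(X,A)$, solve coordinatewise using that each $X(s)\wedge A$ is a product of copies of $A$ and that such systems over $A$ become solvable after allowing the bounded exceptional threads — then invoke Proposition~\ref{p:devide} again to push the solution down to $H$. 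The exactness of $H(X,-)$ and the short exact sequences of Corollary~\ref{c:exact} relating $H(X,A[n])$, $H(X,nA)$ to $H(X,A)$ are the bookkeeping tools that make this go through. The hard part throughout is the same: controlling how the finite exceptional set of threads in Proposition~\ref{p:devide} interacts with the limiting/Ext-solving process, and I would expect the bulk of the write-up to be in the reduced-case Ulm-length lemma.
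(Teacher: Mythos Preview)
Your reduction to the reduced case is fine, but the central claim in your Ulm-length step is false: it is \emph{not} true that $U^1(H(X,R)) = \bigcap_{n\geq 1} nH(X,R)$ vanishes for reduced $R$. Take $R=\ZZ$ and the diagram of Example~\ref{e:finite} with $N=\NN$, restricted along the cofinal chain $n\mapsto\{0,\dots,n\}$. Then $H(X,\ZZ)\cong\prod_\NN\ZZ/\bigoplus_\NN\ZZ$, and the class of the sequence $(k!)_{k\in\NN}$ is a nonzero element of $\bigcap_n nH$: for each $m$ one has $m\mid k!$ for $k\geq m$, so the sequence differs from an $m$-multiple by something finitely supported. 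Thus your proposed lemma (``if $A$ is reduced and $\{|h(n)|\}$ is unbounded then $[h]\notin\bigcap_n nH$'') fails already for $A=\ZZ$. Note also that you invoke the ``contrapositive'' of Proposition~\ref{p:devide} to deduce non-divisibility of $[h]$ from non-divisibility of coordinates; but Proposition~\ref{p:devide} is only a sufficient condition for $m\mid[h]$, so its contrapositive points the wrong way for that deduction.

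The issue is that ``Ulm length $\leq 1$'' does \emph{not} ask for $U^1(H)=0$; it asks that $U^1(H)$ be divisible, equivalently that $p^\omega H\subseteq p^{\omega+1}H$ for every prime $p$. The paper proves exactly this, and does so without ever reducing to reduced $A$: given $[f]\in p^\omega H$, it uses the witnesses $g_k$ for $[f]\in p^kH$ (normalized via \cite[Lemma~2.0.3]{BaSh} so that $g_k(k)=0$) to build, by an explicit level-by-level recursion over $n\in\NN$, an element $h\in G$ with $p[h]=[f]$ and with the property that for each $m$ the coordinates $h(n)_a$ are divisible by $p^{m-1}$ away from finitely many threads. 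Proposition~\ref{p:devide} then gives $[h]\in p^{m-1}H$ for all $m$, i.e.\ $[h]\in p^\omega H$, hence $[f]=p[h]\in p^{\omega+1}H$. The cotorsion part, incidentally, the paper does not redo by hand: after replacing $X$ by its Mittag-Leffler image tower $X'$ (with surjective structure maps), it checks that $K$ and $G$ have the same image in each $X'(n)\wedge A$ and then quotes \cite[Theorem~0.0.4]{BaSh} directly.
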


\begin{proof}
For every $n\in \NN$ we define
$$X'(n):=\bigcap_{s\geq n}\im (X(s)\to X(n))\subseteq X(n).$$
Clearly, by restriction of the structure maps, we can lift $X'$ into a diagram $X':\NN\lrar \Set_*$.
It is not hard to see that we have a natural isomorphism ${\lim}_\NN X'\cong\lim_\NN X$ and, since $X$ satisfies the Mittag-Leffler condition, all the structure maps of $X'$ are surjective.

We also define $(X\wedge A)':\NN\lrar\AbC$ in a similar manner.
Since for every structure map we have $\phi_{n,m}(X(n)\wedge A)=\phi_{n,m}(X(n))\wedge A$, it is not hard to see that
$(X\wedge A)'=X'\wedge A.$

Let $n\in\NN$. Since all the structure maps in $X'$ are surjective, it follows that
${\lim}_{\NN}X\cong {\lim}_{\NN}X'\to X'(n)$ is surjective so
  $({\lim}_{\NN}X)\wedge A\to X'(n)\wedge A$
  is also surjective.
  Thus we have
 $$\psi_n({({\lim}_\NN X)\wedge A})=X'(n)\wedge A.$$
  Since all the structure maps of $X'\wedge A$ are surjective, it follows that the map
  $${\lim}_\NN (X\wedge A)\cong{\lim}_\NN (X\wedge A)'\cong{\lim}_\NN (X'\wedge A)\to X'(n)\wedge A$$
  is surjective and
   $$\psi_n({{\lim}_\NN (X\wedge A)})=X'(n)\wedge A=\psi_n({({\lim}_\NN X)\wedge A}).$$
It follows from \cite[Theorem 0.0.4]{BaSh} that $H$ is cotorsion. Thus, by  \cite[Proposition 54.2]{Fu1}, it remains to show that $u(H)\leq 1$. By \cite[Proposition 3.0.7]{BaSh}, this is equivalent to showing that $l_p(H)\leq\omega$ for every $p\in\PP.$

   Let $p\in\PP.$ We need to show that
   $$p^\omega H\subseteq p^{\omega+1}H.$$
   Let $f\in{\lim}_\NN(X\wedge A)$ such that
   $$[f]\in p^\omega H=\bigcap_{n=0}^\infty p^n H\subseteq H.$$
   It follows from \cite[Lemma 2.0.3]{BaSh} that for every $g\in{\lim}_\NN(X\wedge A)$ and every $n\in\NN$ there exists $g'\in{\lim}_\NN(X\wedge A)$ such that $[g]=[g']$ and $g'(n)=0$. We can thus assume $f(0)=0$.

   Let $n\geq 1$. Then $[f]\in p^n H$ so there exists $g_n\in{\lim}_\NN(X\wedge A)$ such that in $H$ we have
   $[f]=p^n[g_n]$. Thus, in ${\lim}_\NN(X\wedge A)$ we have $f-p^n g_n\in ({\lim}_\NN X)\wedge A$. Again, using \cite[Lemma 2.0.3]{BaSh}, we can assume that for every $n\geq 1$ we have $g_n(n)=0$.

Since for every $k\geq 1$ we have $f-p^k g_k\in({\lim}_\NN X)\wedge A$, we can choose recursively a strictly increasing sequence of positive integers $(l_k)_{k=1}^\infty$ and $x_i\in\lim_\NN X$, $b_{i,k}\in A$ for every $k\geq 1$ and $1\leq i\leq l_k$ such that for every $k\geq 1$ we have
$$f-p^k g_k=\sum_{i=1}^{l_k}x_i b_{i,k}.$$
We define $l_0:=0$ and $g_0=f$. Note that the last formula remains valid also for $k=0$ and we have $g_0(0)=f(0)=0$. For every $n\in\NN$ we define $$Z_n:=\{x_1(n),x_2(n),\dots\}\subseteq X(n).$$
Since $g_n(n)=0$ we have
$$\supp(f(n))\subseteq \{x_1(n),\dots,x_{l_n}(n)\}\subseteq Z_n.$$

We now define $h(n)\in X(n)\wedge A$ for every $n\in \NN$, recursively.

For $n=0$ we define $h(0):=0\in X(0)\wedge A$.

Let $n\geq 1$ and suppose we have defined $h(i)\in X(i)\wedge A$ for every $i<n$ such that
\begin{enumerate}
  \item $\phi_{{n-1}}(h({n-1}))=h({n-2})$,
  \item $\supp(h({n-1}))\subseteq Z_{n-1}$,
  \item For every $m\geq 1$ and every $a\in X(n-1)\setminus \{x_1({n-1}),\dots,x_{l_m}({n-1})\}$ we have $p^{m-1}|h(n-1)_a$ and $ph(n-1)_a=f(n-1)_a$.
\end{enumerate}

We define an equivalence relation on $Z_n$ by letting $a_1\sim a_2$ iff $\phi_n(a_1)=\phi_n(a_2)$. Let $W\subseteq Z_n$ be an equivalence class. We define
$$k_W:=\min\{k\in\NN|W\cap\{x_1(n),\dots,x_{l_k}(n)\}\neq\phi\}\geq 1,$$
and choose $a_0\in W\cap\{x_1(n),\dots,x_{l_{k_W}}(n)\}$.
Let $a\in W\setminus\{a_0\}$. We define
$$k=k_a:=\min\{k\in\NN|a\in\{x_1(n),\dots,x_{l_k}(n)\}\}\geq k_W.$$
We have
$$f-p^{k-1} g_{k-1}=\sum_{i=1}^{l_{k-1}}x_i b_{i,{k-1}},$$
so in particular
$$f(n)_a-p^{k-1} g_{k-1}(n)_a=\sum_{i=1}^{l_{k-1}}(x_i(n) b_{i,{k-1}})_a.$$
Since $a\notin\{x_1(n),\dots,x_{l_{k-1}}(n)\}$ we have $f(n)_a=p^{k-1} g_{k-1}(n)_a$ and $p^{k-1}|f(n)_a$.
If $f(n)_a=0$ or $k=1$ we define $h(n)_a=0$. Otherwise, we define $h(n)_a:=p^{n-2}g_{k-1}(n)_a$, so $p^{k-2}|h(n)_a$ and $ph(n)_a=f(n)_a$.
Note that $h(n)_a\neq 0$ for only a finite number of $a\in W\setminus\{a_0\}$ so we can define
$$h(n)_{a_0}:=h(n-1)_{\phi_n(a_0)}-\sum_{a\in W\setminus\{a_0\}}h(n)_a.$$
We have thus defined $h(n)_a$ for every $a\in W$. Ranging over all equivalence classes this defines $h(n)_a$ for every
$a\in Z_n$, and for the remaining $a\in X(n)$ we define $h(n)_a=0$.

We claim that for almost every equivalence class $W\subseteq Z_n$ we have $\forall a\in W.h(n)_a=0$.
Since $h(n-1)\in X(n-1)\wedge A$ we have that $h(n-1)_a=0$ for almost every $a\in Z_{n-1}$. Thus, it is enough to consider equivalence classes $W\subseteq Z_n$ for which $\forall a\in W.h(n-1)_{\phi_n(a)}=0$. For such equivalence classes it is easy to see that if $\supp(f(n))\cap W=\phi$ then $\forall a\in W.h(n)_a=0$.
We have thus shown that $h(n)\in X(n)\wedge A$.

It is clear from the definition that
$\supp(h({n}))\subseteq Z_{n}$ and $\phi_{{n}}(h({n}))=h({n-1})$ (note that $\phi_n:Z_n\to Z_{n-1}$ is surjective).

Now let $m\geq 1$ and $a\in X(n)\setminus \{x_1({n}),\dots,x_{l_m}({n})\}$. If $a\notin Z_{n}$ then $h(n)_a=f(n)_a=0$ and the result is clear, so suppose $a\in Z_{n}$. Let $W\subseteq Z_n$ be an equivalence class such that $a\in W$.
Suppose $a\neq a_0$. Then $$k=\min\{k\in\NN|a\in\{x_1(n),\dots,x_{l_k}(n)\}\}\geq m+1\geq 2.$$
According to the construction above we have $p^{m-1}|h(n)_a$ and $ph(n)_a=f(n)_a$ (whether $f(n)_a=0$ or not).

Now suppose $a=a_0$. According to the construction above we have $a_0\in W\cap \{x_1(n),\dots,x_{l_{k_W}}(n)\}$. But $a_0\notin\{x_1({n}),\dots,x_{l_m}({n})\}$ so $k_W\geq m+1$.
For every $b\in W\setminus\{a_0\}$ we have $m+1\leq k_W\leq k_b$, so according to the construction above we have $p^{m-1}|h(n)_b$ and $ph(n)_b=f(n)_b$ (whether $f(n)_b=0$ or not).
By definition of $k_W$ we have
$W\cap\{x_1(n),\dots,x_{l_{k_W-1}}(n)\}=\phi$ so
$$\phi_n(a_0)\in X({n-1})\setminus \{x_1({n-1}),\dots,x_{l_{k_W-1}}({n-1})\}.$$
By the induction hypothesis we have
$p^{k_W-2}|h(n-1)_{\phi_n(a_0)}$ and $ph(n-1)_{\phi_n(a_0)}=f(n-1)_{\phi_n(a_0)}$.
But $m+1\leq k_W$ so $p^{m-1}|h(n-1)_{\phi_n(a_0)}$. According to the construction above we have
$$h(n)_{a_0}=h(n-1)_{\phi_n(a_0)}-\sum_{b\in W\setminus\{a_0\}}h(n)_b,$$
so $p^{m-1}|h(n)_{a_0}$ and $ph(n)_{a_0}=f(n)_{a_0}$, which finishes the recursive definition.

Since $\phi_{n+1}(h(n+1))=h(n)$ for every $n\in\NN$ we have that $$h:=(h(n))_{n\in\NN}\in {\lim}_\NN (X\wedge A).$$
Taking $m=1$ we obtain that for every $n\in\NN$ we have
  $$\supp(ph(n)-f(n))\subseteq\{x_1(n),\dots,x_{l_1}(n)\}$$ so
  $|ph(n)-f(n)|\leq l_1.$
  It follows from Proposition \ref{p:bounded} that $ph-f\in ({\lim}_\NN X)\wedge A$. Thus $p[h]-[f]=[ph-f]=0$ and $p[h]=[f]$.
According to Proposition \ref{p:devide}, for every $m\geq 1$ we have $p^{m-1}|[h]$ in $H$. Thus $[h]\in p^\omega H$ and $[f]=p[h]\in p^{\omega+1} H$.
\end{proof}

If $\cT$ is any directed poset, recall that a sub-poset $\cS\subseteq \cT$ is called \emph{cofinal} if the inclusion functor $\cS\hookrightarrow \cT$ is cofinal, or in other words if for every $t\in\cT$ there exists $s\in \cS$ such that $s\geq t$. The cofinality of $\cT$ is the cardinal
$$\cf(\cT):=\min\{|\cS|\::\:\cS\subseteq \cT\text{ is cofinal}\}.$$

\begin{cor}\label{c:alg compact}
If $\cT=\lambda$ is any ordinal and $X$ satisfies the Mittag-Leffler condition then the group $H$ is algebraically compact.
\end{cor}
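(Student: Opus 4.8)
The plan is to reduce the corollary to the two cases already treated, namely the tower case of Theorem~\ref{t:alg compact} and the $\aleph_1$-directed case of Corollary~\ref{c:a1 directed}, according to the value of the cofinality $\cf(\lambda)$. First I would dispose of the degenerate situation $\lambda=0$: then the poset is empty, $\lim_\lambda X$ is the one-point pointed set, and $H=0$, which is trivially algebraically compact. So from now on assume $\lambda\geq 1$, so that $\lambda$, being a chain with least element $0$, is a directed poset.

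Next I would split according to whether $\cf(\lambda)=\aleph_0$. If $\cf(\lambda)\neq\aleph_0$, I claim $\lambda$ is $\aleph_1$-directed: if $\lambda$ is a successor ordinal it has a maximal element, which is an upper bound for every subset; and if $\lambda$ is a limit ordinal then $\cf(\lambda)$ is an uncountable regular cardinal, so no countable $S\subseteq\lambda$ is cofinal in $\lambda$, whence $\sup S<\lambda$ and $\sup S\in\lambda$ is an upper bound for $S$. In this case Corollary~\ref{c:a1 directed} gives $H=0$, which is algebraically compact. If $\cf(\lambda)=\aleph_0$, then $\lambda$ is a limit ordinal and I may choose a strictly increasing sequence $(\lambda_n)_{n\in\NN}$ in $\lambda$ with $\sup_n\lambda_n=\lambda$, which gives a cofinal functor $\NN\to\lambda$. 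Restriction along a cofinal functor does not change an inverse limit — this is precisely the reduction made at the start of this section to replace a countable $\cT$ by $\NN$ — and it is compatible with the natural maps $\rho$; applying it both to $X$ and to $X\wedge A$ shows that the group $H$ attached to $X$ is canonically isomorphic to the one attached to the tower $X|_{\NN}$. It then remains to note that $X|_{\NN}$ still satisfies the Mittag-Leffler condition: given $m$, choose $s$ with $\lambda_m\leq s<\lambda$ witnessing Mittag-Leffler for $X$ at $\lambda_m$, and then $\lambda_n\geq s$; for any $\lambda_k\geq\lambda_n$ one has $\lambda_k,\lambda_n\geq s$, so $\im(X(\lambda_n)\to X(\lambda_m))=\im(X(s)\to X(\lambda_m))=\im(X(\lambda_k)\to X(\lambda_m))$. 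Theorem~\ref{t:alg compact} applied to the tower $X|_{\NN}$ then yields the conclusion.

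The main difficulty here is bookkeeping rather than anything conceptual: one must be careful that the passage to a cofinal subposet respects the natural maps $\rho$, so that the cokernels $H$ computed over $\lambda$ and over $\NN$ genuinely agree, and that the Mittag-Leffler hypothesis is inherited by the restricted tower. Both points are routine (and the first is already used implicitly in this section), so once the elementary case analysis on $\cf(\lambda)$ is in place the whole weight of the argument rests on Theorem~\ref{t:alg compact} and Corollary~\ref{c:a1 directed}.
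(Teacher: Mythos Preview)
Your proposal is correct and follows essentially the same approach as the paper: split on the cofinality of $\lambda$, invoking Corollary~\ref{c:a1 directed} when $\lambda$ is $\aleph_1$-directed and reducing to Theorem~\ref{t:alg compact} via a cofinal functor $\NN\to\lambda$ otherwise. You simply fill in more of the routine bookkeeping (the degenerate case $\lambda=0$, preservation of Mittag-Leffler under cofinal restriction, identification of the cokernels) that the paper leaves implicit.
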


\begin{proof}
  If $\cf(\lambda)\leq \aleph_0$ then we have a cofinal functor $\NN\to\lambda$ and the result follows from Theorem \ref{t:alg compact}. If $\cf(\lambda)\geq \aleph_1$ then it is easy to see that $\lambda$ is $\aleph_1$-directed so the result follows from Corollary \ref{c:a1 directed}.
\end{proof}

\section{Counterexamples for a directed poset}
Throughout this section, we let $A$ be an abelian group, $\cT$ a directed poset and $X:\cT\lrar \Set_*$ a diagram of pointed sets.

\begin{prop}\label{p:not cotorsion}
Suppose that $A$ is not almost divisible and for every sequence $(d_n)_{n\in\NN}$ of elements in $A\setminus\{0\}$ there exists a sequence $(g_n)_{n\in\NN}$ of elements in ${\lim}_\cT (X\wedge A)$ such that for every sequence of natural numbers $(k_n)_{n\in\NN}$ there exists $s\in\cT$ such that $|g_n(s)|=k_n$ and for every $a\in X(s)$ we have $g_n(s)_a\in\{0,d_n\}$. Then $H(X,A)$ is not cotorsion.
\end{prop}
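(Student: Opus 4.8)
The plan is to produce a torsion-free group $F$ with $\Ext(F,H(X,A))\neq 0$. Since an abelian group is cotorsion precisely when $\Ext(F,-)$ annihilates it for every torsion-free $F$ (as recalled in the introduction), this yields the conclusion. The group $F$ will be a rank-one torsion-free group built from a ``bad'' multiplier sequence, and $\Ext(F,-)$ will be realized as a $\lim^1$ which I shall show is nonzero by testing against the points $s$ supplied by the hypothesis.

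Concretely, apply Proposition~\ref{p:criterion} to the assumption that $A$ is not almost divisible: there is a sequence $(q_n)_{n\in\NN}$ in $\ZZ$ for which the descending chain $A=q_{<0}A\supseteq q_{<1}A\supseteq\cdots$ never stabilizes. Necessarily $q_n\neq 0$ for all $n$, for otherwise the chain stabilizes at $\{0\}$; and, replacing $(q_n)$ by the sequence of partial products taken between consecutive indices at which the chain drops strictly, we may assume $q_{<n}A\supsetneq q_{<n+1}A$ for every $n$. For each $n$ pick $d_n\in A$ with $q_{<n}d_n\in q_{<n}A\setminus q_{<n+1}A$ (possible since $q_{<n}A\supsetneq q_{<n+1}A$); in particular $d_n\neq 0$. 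Now invoke the hypothesis for the sequence $(d_n)_{n\in\NN}$ to obtain a sequence $(g_n)_{n\in\NN}$ in $G(X,A)={\lim}_\cT(X\wedge A)$ with the stated property. Finally put $F:=\varinjlim\bigl(\ZZ\xrightarrow{q_0}\ZZ\xrightarrow{q_1}\ZZ\to\cdots\bigr)$, a torsion-free group (a directed colimit of copies of $\ZZ$ along injections). Applying $\Hom(-,H(X,A))$ to the standard presentation $0\to\bigoplus_{n}\ZZ\to\bigoplus_{n}\ZZ\to F\to 0$ identifies $\Ext(F,H(X,A))$ with the cokernel of the map $\prod_{n}H(X,A)\to\prod_{n}H(X,A)$ sending $(z_n)_n$ to $(z_n-q_nz_{n+1})_n$ (no correction term appears, as $\Ext(\ZZ,-)=0$). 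So it suffices to show that $\bigl([g_n]\bigr)_n$ is not of the form $\bigl(z_n-q_nz_{n+1}\bigr)_n$ in $\prod_n H(X,A)$.

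Suppose it were. Choose $z_n\in H(X,A)$ with $z_n-q_nz_{n+1}=[g_n]$ and lift each $z_n$ to $\tilde z_n\in G(X,A)$. Then $b_n:=\tilde z_n-q_n\tilde z_{n+1}-g_n$ lies in $K(X,A)$, so by Proposition~\ref{p:bounded} the quantity $M_n:=\sup_{t\in\cT}|b_n(t)|$ is finite. Now choose the size sequence $(k_n)_{n\in\NN}$ recursively so that $k_n>(k_0+\cdots+k_{n-1})+(M_0+\cdots+M_n)+n$ for every $n$ — possible because the $M_j$ are now fixed natural numbers — and let $s\in\cT$ be the point the hypothesis provides for this $(k_n)$. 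Write $g_j(s)=d_j\chi_{S_j}$ with $S_j\subseteq X(s)$ and $|S_j|=k_j$. Iterating $\tilde z_n(s)=q_n\tilde z_{n+1}(s)+g_n(s)+b_n(s)$ from $n=0$ to $n=N$ gives
$$\tilde z_0(s)=\sum_{j=0}^{N-1}q_{<j}\bigl(g_j(s)+b_j(s)\bigr)+q_{<N}\tilde z_N(s).$$
For every $N$ exceeding $|\supp\tilde z_0(s)|$, the growth condition on $(k_n)$ guarantees a coordinate $a\in S_{N-1}$ lying outside $S_0\cup\cdots\cup S_{N-2}$, outside the supports of $b_0(s),\dots,b_{N-1}(s)$, and outside $\supp\tilde z_0(s)$. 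Reading off the $a$-coordinate of the displayed identity: the first sum contributes only $q_{<N-1}d_{N-1}$, the $b$-terms vanish, the last term lies in $q_{<N}A$, and the left-hand side is $0$; hence $q_{<N-1}d_{N-1}\in q_{<N}A$, contradicting the choice of $d_{N-1}$.

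I expect the main obstacle to be the bookkeeping in the last paragraph, and the delicate point there is the order of quantifiers: one fixes $(q_n)$ and $(d_n)$, receives $(g_n)$, assumes a solution $(z_n)$ exists, and only then — with the bounds $M_n$ in hand — chooses $(k_n)$ and receives $s$. One must also check that for all large $N$ the finitely many coordinates of $X(s)$ that have to be avoided inside $S_{N-1}$ are genuinely outnumbered by $|S_{N-1}|=k_{N-1}$; this is precisely what the recursive inequality on $(k_n)$ secures. Everything else — the passage from the non-almost-divisibility of $A$ to the strictly decreasing chain via Proposition~\ref{p:criterion}, the construction of $F$, and the identification of $\Ext(F,-)$ with a $\lim^1$ — is routine.
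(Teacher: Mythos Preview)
Your proposal is correct and follows essentially the same approach as the paper: both extract from Proposition~\ref{p:criterion} a strictly decreasing chain $q_{<n}A$, pick witnesses $d_n$ with $q_{<n}d_n\notin q_{<n+1}A$, invoke the hypothesis to obtain $(g_n)$, reduce (via \cite[Proposition~2.0.2]{BaSh}, which is exactly your $\Ext(F,-)=\lim^1$ identification) to showing the system $x_n-q_nx_{n+1}=[g_n]$ is unsolvable, lift a hypothetical solution, use Proposition~\ref{p:bounded} to get the bounds $M_n$, and only then choose $(k_n)$ large enough and pass to the point $s$. The single difference is the endgame: the paper bounds $|q_{<n}h_n(s)|$ by an inductive lemma on support sizes and derives a numerical contradiction against $k_n$, whereas you telescope $\tilde z_0(s)$ explicitly and read off a single coordinate $a\in S_{N-1}$ to force $q_{<N-1}d_{N-1}\in q_{<N}A$; these are two packagings of the same counting idea, and your version is arguably a shade more direct.
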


\begin{proof}
Since $A$ is not almost divisible, we have by Proposition \ref{p:criterion} that there exists a sequence $(r_n)_{n\in\NN}$ in $\ZZ$ such that for every $m\in\NN$ there exists $n>m$ with
$$r_{<m} A\neq r_{<n} A.$$
Clearly we can construct $0=i_0<i_1<\dots$ such that if we let $q_n:=r_{i_n}\cdots r_{i_{n+1}-1}$ we get that for every $m\in\NN$,
$q_{<m+1} A\neq q_{<m} A.$
We thus have
$$q_{< m+1} A\subsetneq q_{< m} A,$$
so there exists $d_m\in A\setminus\{0\}$ such that
$$q_{< m}d_m\notin q_{< m+1}A.$$
Let $(g_n)_{n\in\NN}$ be a sequence of elements in ${\lim}_\cT (X\wedge A)$ defined from $(d_n)_{n\in\NN}$ as in the conditions of the proposition. By \cite[Proposition 2.0.2]{BaSh}, it is enough to show that the system of equations over $H(X,A)$ given by
$$x_n-q_n x_{n+1}=[g_n]\:\:\:(n\in\NN),$$
has no solution in $H(X,A)$.

  So assume (to derive a contradiction) that $([h_n])_{n\in\NN}$ is a solution in $H(X,A)$ to the above system of equations. That is, for every $n\in\NN$ we have $h_n\in{\lim}_\cT (X\wedge A)$ and $[h_n]-q_n[h_{n+1}]=[g_n].$
  Thus
  $$f_n:=g_n-h_n+q_nh_{n+1}\in ({\lim}_\cT X)\wedge A,$$
  and using Proposition \ref{p:bounded} we can define
  $$k_n^0:=\sup\{|f_n(s)|\::\:s\in\cT\}\in\NN.$$
  We now define recursively a sequence of natural numbers $(k_n)_{n\in\NN}$ by the formula
  $$k_n:=(n+1)+\sum_{l=0}^{n}k_l^0+\sum_{l=0}^{n-1}k_l$$
  (so that $k_0=1+k_0^0$). Let $s$ be an element in $\cT$ defined from $(k_n)_{n\in\NN}$ as in the conditions of the proposition.

\begin{lem}
  For every $n\in\NN$ we have
  $$|q_{<n}h_n(s)|\leq |h_0(s)|+\sum_{l=0}^{n-1}|q_{<l}g_l(s)|+ \sum_{l=0}^{n-1}|q_{<l}f_l(s)|$$
\end{lem}

\begin{proof}
We prove the lemma by induction on $n$. When $n=0$ the lemma is clear. Now suppose the lemma is true for some $n\in\NN$ and let us prove it for $n+1$.

In $X(s)\wedge A$ we have
$$f_n(s)=g_n(s)-h_n(s)+q_nh_{n+1}(s).$$
Multiplying by $q_{<n}$ we obtain
$$q_{<n}f_n(s)=q_{<n}g_n(s)- q_{<n}h_n(s)+ q_{<n+1}h_{n+1}(s).$$
It follows that
$$\supp(q_{<n+1}h_{n+1}(s))\subseteq \supp(q_{<n}h_n(s))\cup \supp(q_{<n}g_n(s))\cup \supp(q_{<n}f_n(s)),$$
so
$$|q_{<n+1}h_{n+1}(s)|\leq|q_{<n}h_n(s)|+|q_{<n}g_n(s)|+|q_{<n}f_n(s)|.$$
Using the induction hypothesis we obtain
$$|q_{<n+1}h_{n+1}(s)|\leq|h_0(s)|+\sum_{l=0}^{n-1}|q_{<l}g_l(s)|+\sum_{l=0}^{n-1}|q_{<l}f_l(s)|+|q_{<n}g_n(s)|+|q_{<n}f_n(s)|=$$
$$|h_0(s)|+\sum_{l=0}^{n}|q_{<l}g_l(s)|+\sum_{l=0}^{n}|q_{<l}f_l(s)|,$$
which proves our lemma.
\end{proof}
Now let $n\in\NN$. It follows from the lemma above that
  $$|q_{<n}h_n(s)|\leq |h_0(s)|+\sum_{l=0}^{n-1}|g_l(s)|+ \sum_{l=0}^{n-1}|f_l(s)|\leq |h_0(s)|+\sum_{l=0}^{n-1}k_l+ \sum_{l=0}^{n-1}k_l^0.$$
In $X(s)\wedge(A/q_{<n+1}A)$ we have
$$[q_{<n}f_n(s)]=[q_{<n}g_n(s)]- [q_{<n}h_n(s)]+ [q_{<n+1}h_{n+1}(s)]=[q_{<n}g_n(s)]- [q_{<n}h_n(s)],$$
so
$$|[q_{<n}g_n(s)]|\leq |[q_{<n}f_n(s)]|+|[q_{<n}h_n(s)]|.$$
Since $q_{< n}d_n\notin q_{< n+1}A$ and for every $a\in X(s)$ we have $g_n(s)_a\in\{0,d_n\}$, we obtain that
$$|g_n(s)|=|[q_{<n}g_n(s)]|.$$
We thus have
$$(n+1)+\sum_{l=0}^{n}k_l^0+\sum_{l=0}^{n-1}k_l=k_n=|g_n(s)| =|[q_{<n}g_n(s)]|\leq |[q_{<n}f_n(s)]|+|[q_{<n}h_n(s)]|\leq$$
$$|q_{<n}f_n(s)|+|q_{<n}h_n(s)|\leq k_n^0+|h_0(s)|+\sum_{l=0}^{n-1}k_l+ \sum_{l=0}^{n-1}k_l^0= |h_0(s)|+\sum_{l=0}^{n-1}k_l+ \sum_{l=0}^{n}k_l^0.$$
Since $n$ can be arbitrary large, we obtain a contradiction.
\end{proof}

\begin{thm}\label{t:counter}
Suppose $A$ is not almost divisible. Then there exists a directed diagram $X:\cT\lrar\Set_*$, with surjective connecting homomorphisms and $|\cT|=2^{\aleph_0}$, such that $H(X,A)$
is not cotorsion.
\end{thm}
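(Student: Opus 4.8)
The plan is to deduce the theorem from Proposition \ref{p:not cotorsion} by writing down an explicit directed poset $\cT$ of cardinality $2^{\aleph_0}$ together with a diagram $X:\cT\lrar\Set_*$ with surjective connecting maps for which the sequences $(g_n)$ required in that proposition can simply be exhibited. The only real design decision is to arrange that a \emph{single} node of $\cT$ can realize arbitrary prescribed finite support sizes $k_n$ for \emph{all} $n$ simultaneously; this forces the stalks $X(s)$ to be infinite (so the finite‑subset diagram of Example \ref{e:finite} is of no use here), while directedness and size $2^{\aleph_0}$ must be retained. The trick is to let the $n$-th coordinate of a function $\sigma\in\NN^\NN$ control the size of the $n$-th ``block'' of $X(\sigma)$.

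\textbf{The poset.} I would take $\cT:=\NN^\NN$, the set of all functions $\sigma:\NN\to\NN$, ordered coordinatewise ($\sigma\le\tau$ iff $\sigma(n)\le\tau(n)$ for all $n$). This is a directed poset — the coordinatewise maximum of two functions is a common upper bound — and $|\cT|=2^{\aleph_0}$. It is \emph{not} $\aleph_1$-directed (the constant functions $n\mapsto k$, $k\in\NN$, admit no common upper bound), which is consistent with, and indeed required by, Corollary \ref{c:a1 directed}.

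\textbf{The diagram and the verification.} For $\sigma\in\cT$ put $X(\sigma):=\{*\}\cup\{(n,i):n\in\NN,\ 1\le i\le\sigma(n)\}$, pointed at $*$, and for $\sigma\le\tau$ let the connecting map $\phi_{\tau,\sigma}:X(\tau)\to X(\sigma)$ send $(n,i)$ to $(n,i)$ if $i\le\sigma(n)$ and to $*$ otherwise. Functoriality and the surjectivity of each $\phi_{\tau,\sigma}$ are immediate. Now fix a sequence $(d_n)_{n\in\NN}$ in $A\setminus\{0\}$ and set
$$g_n(\sigma):=\sum_{i=1}^{\sigma(n)}(n,i)\,d_n\in X(\sigma)\wedge A.$$
Because $\sigma(n)\le\tau(n)$ when $\sigma\le\tau$, one checks at once that $\phi_{\tau,\sigma}(g_n(\tau))=g_n(\sigma)$, so $g_n\in\lim_\cT(X\wedge A)$; moreover $\supp(g_n(\sigma))=\{(n,i):1\le i\le\sigma(n)\}$, whence $|g_n(\sigma)|=\sigma(n)$ and every coefficient of $g_n(\sigma)$ lies in $\{0,d_n\}$. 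Thus, given any sequence of natural numbers $(k_n)_{n\in\NN}$, the element $s:=(k_n)_n\in\cT$ satisfies $|g_n(s)|=k_n$ for every $n$ and $g_n(s)_a\in\{0,d_n\}$ for every $a\in X(s)$. This is precisely the hypothesis of Proposition \ref{p:not cotorsion} (its other hypothesis, that $A$ is not almost divisible, being our standing assumption), so $H(X,A)$ is not cotorsion, and $\cT$, $X$ have all the stated properties.

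\textbf{Where the difficulty lies.} With Proposition \ref{p:not cotorsion} available, there is essentially no obstacle left in this statement: the substantive combinatorics — that the associated system of equations $x_n-q_nx_{n+1}=[g_n]$ over $H(X,A)$ has no solution — has already been carried out in Proposition \ref{p:not cotorsion} using the characterisation of almost divisibility in Proposition \ref{p:criterion}. All that the present proof contributes is the correct choice of indexing poset and diagram so that the $(g_n)$ demanded there are visibly available, and the routine check that the proposed $X$ is functorial with surjective structure maps.
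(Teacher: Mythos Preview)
Your proof is correct and follows essentially the same strategy as the paper: reduce to Proposition \ref{p:not cotorsion} by exhibiting a diagram in which a single node can realise any prescribed sequence $(k_n)$ of support sizes, with the node being the function $n\mapsto k_n$ itself. The only difference is cosmetic: the paper takes $\cT$ to be the finite nonempty subsets of $\NN^\NN$ ordered by inclusion with $X(s)=(s\times\NN)_+$, whereas you take $\cT=\NN^\NN$ under the coordinatewise order with $X(\sigma)=\{*\}\cup\{(n,i):1\le i\le\sigma(n)\}$; your choice is arguably a shade more direct, but the underlying idea is identical.
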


\begin{proof}
Let $\Phi$ be the set of all functions $\NN\to\NN$, and let $\cT$ be the poset of all finite nonempty subsets of $\Phi$, ordered by inclusion. Clearly $\cT$ is directed and $|\cT|=|\Phi|=2^{\aleph_0}$. We define a diagram $X:\cT\to\Set_*$, by letting $X(s):=(s\times \NN)_+$, and if $s\subseteq t$ then the induced map $X(t)\to X(s)$ is the identity on $X(s)$ and $*$ on the rest. Clearly $X$ has surjective connecting homomorphisms.

Let $(d_n)_{n\in\NN}$ be a sequence of elements in $A\setminus\{0\}$. Let $n\in\NN$. For every $s\in\cT$ we define $g_n(s)\in X(s)\wedge A$ by letting $g_n(s)_a:=d_n$ if $a\in\{(k,l)\in s\times \NN\:|\:l<k_n\}$ and $g_n(s):=0$ otherwise. It is not hard to see that $g_n:=(g_n(s))_{s\in\cT}\in{\lim}_\cT X\wedge A.$

Now let $k\in\Phi$ be a sequence of natural numbers. We define $s:=\{k\}\in\cT$. Then clearly $|g_n(s)|=k_n$ and for every $a\in X(s)$ we have $g_n(s)_a\in\{0,d_n\}$. By Proposition \ref{p:not cotorsion}, we have that $H(X,A)$ is not cotorsion.
\end{proof}

Let us summarize what we have shown about the question of when is $H(X,A)$ cotorsion.

\begin{cor}\
\begin{enumerate}
  \item The group $H(X,A)$ is cotorsion if $|\cT|=\aleph_0$ and $X$ satisfies the Mittag-Leffler condition (see Theorem \ref{t:alg compact}).
  \item The group $H(X,A)$ is cotorsion if $A$ is almost uniquely divisible (see Theorem \ref{t:p divisible gen}).
  \item If $A$ is not almost divisible there exists a pro-set $X:\cT\lrar\Set_*$, with surjective connecting homomorphisms and $|\cT|=2^{\aleph_0}$, such that $H(X,A)$ is not cotorsion (see Theorem \ref{t:counter}).
\end{enumerate}
\end{cor}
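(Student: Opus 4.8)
The plan is simply to assemble the three assertions from results already established in the paper, inserting the short logical links between the various strengths of divisibility and the cotorsion condition.

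First I would dispose of part (1). The hypothesis $|\cT| = \aleph_0$ lets us replace $\cT$ by $\NN$ via a cofinal functor, exactly as in the opening paragraph of Section 5; since a cofinal functor preserves the relevant inverse limits, the group $H(X,A)$ is unchanged by this replacement. Theorem \ref{t:alg compact} then applies and shows that $H(X,A)$ is algebraically compact, and every algebraically compact group is cotorsion (this is built into the definition recalled after Theorem \ref{t:main alg com}). This gives (1).

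For part (2) I would invoke Theorem \ref{t:p divisible gen}(3): if $A$ is almost uniquely divisible then $\coker(\rho) = H(X,A)$ is again almost uniquely divisible. It then remains only to recall the chain of implications: almost uniquely divisible $\Rightarrow$ almost divisible (by the Definition following Corollary \ref{c:AD is AC}) $\Rightarrow$ algebraically compact (Corollary \ref{c:AD is AC}) $\Rightarrow$ cotorsion. Note that this part is unconditional on $\cT$ and does not use the Mittag-Leffler condition. Part (3) is nothing but a restatement of Theorem \ref{t:counter}, once one reads ``pro-set'' as ``diagram of pointed sets indexed by a directed poset''; the diagram constructed there has surjective connecting maps and index poset of cardinality $2^{\aleph_0}$, and $H(X,A)$ fails to be cotorsion.

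There is no genuine obstacle here: this corollary is a bookkeeping summary, so the only care required is to match the hypotheses of the cited theorems precisely and to keep straight the hierarchy (uniquely divisible) $\subseteq$ (almost uniquely divisible) $\subseteq$ (almost divisible) $\subseteq$ (algebraically compact) $\subseteq$ (cotorsion) among abelian groups. If anything deserves a sentence of emphasis, it is the contrast between (1)--(2), which give positive results under disjoint hypotheses (on $\cT$ versus on $A$), and (3), which shows that dropping both hypotheses is fatal.
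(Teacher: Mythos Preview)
Your proposal is correct and matches the paper's approach exactly: the corollary in the paper is stated as a summary with the references to Theorems \ref{t:alg compact}, \ref{t:p divisible gen}, and \ref{t:counter} embedded directly in the statement, and no separate proof is given. Your write-up simply spells out the short logical links (algebraically compact $\Rightarrow$ cotorsion, almost uniquely divisible $\Rightarrow$ cotorsion via Corollary \ref{c:AD is AC}) that the paper leaves implicit.
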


An abelian group is almost divisible iff it is isomorphic to a direct sum of a divisible group and a bounded group. Thus almost divisible groups are groups of the form
$$(\bigoplus_{\alpha<\lambda_0} \QQ )\oplus\bigoplus_{p\in \PP}\bigoplus_{\alpha<\lambda_p}\ZZ(p^\infty)\oplus B,$$
with $(\lambda_p)_{p\in\PP\cup \{0\}}$ cardinals and $B$ bounded. An abelian group is almost uniquely divisible iff it is isomorphic to a direct sum of a uniquely divisible group and a bounded group. Thus almost uniquely divisible groups are groups of the form
$$(\bigoplus_{\alpha<\lambda_0} \QQ )\oplus B,$$
with $\lambda_0$ a cardinal and $B$ bounded.
Thus, the following question remains:
\begin{ques}
  Let $p$ be a prime number and suppose $A=\ZZ(p^\infty)$. Is $H(X,A)$ cotorsion whenever $X$ satisfies the Mittag-Leffler condition?
\end{ques}

Now suppose that $X:\cT\lrar \Set^f_*$ is a diagram of finite pointed sets.
If the diagram $\cT$ is countable then, by Theorem \ref{t:alg compact}, $H(X,A)$ is cotorsion (even algebraically compact) for \textbf{any} abelian group $A$. (Note that a diagram of finite sets always satisfies the Mittag-Leffler condition.) The following Theorem shows that for uncountable $\cT$, Theorem \ref{t:p divisible} is best possible.
\begin{thm}\label{t:finite counter}
   Suppose $A$ is not cotorsion and $\lambda>\aleph_0$ a cardinal. Then there exists a directed diagram $X:\cT\to\Set^f_*$, with $|\cT|=\lambda$, such that $H(X,A)$ is not cotorsion.
\end{thm}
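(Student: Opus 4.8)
The plan is to reuse the diagram of Example~\ref{e:finite}. Fix a set $N$ with $|N|=\lambda$, let $\cT$ be the poset of finite nonempty subsets of $N$ ordered by inclusion, and let $X\colon\cT\to\Set^f_*$ be given by $X(s):=s_+$, with the connecting map $X(t)\to X(s)$ for $s\subseteq t$ equal to the identity on $s_+$ and $*$ on $t\setminus s$. Then $\cT$ is directed, $|\cT|=\lambda$, every $X(s)$ is finite, and Example~\ref{e:finite} gives a natural isomorphism
$$H(X,A)\cong \prod_{\alpha\in N}A\Big/\bigoplus_{\alpha\in N}A=:Q.$$
So it suffices to show that $Q$ is not cotorsion when $A$ is not cotorsion and $\lambda>\aleph_0$.

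For this I would use the characterization of cotorsion groups by solvability of systems of equations, exactly as in Proposition~\ref{p:not cotorsion}. Since $A$ is not cotorsion, \cite[Proposition~2.0.2]{BaSh} provides a sequence $(q_n)_{n\in\NN}$ of positive integers and a sequence $(b_n)_{n\in\NN}$ in $A$ such that the system $x_n-q_nx_{n+1}=b_n$ ($n\in\NN$) has no solution in $A$; one may even take $q_n=n+1$. For each $n$ let $c_n\in Q$ be the class of the constant tuple $(b_n)_{\alpha\in N}\in\prod_{\alpha\in N}A$. By \cite[Proposition~2.0.2]{BaSh} again, it is then enough to prove that the system $x_n-q_nx_{n+1}=c_n$ ($n\in\NN$) has no solution in $Q$.

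I would then suppose, for contradiction, that $([h_n])_{n\in\NN}$ solves it, with $h_n\in\prod_{\alpha\in N}A$. For every $n$ the tuple $h_n-q_nh_{n+1}-(b_n)_{\alpha\in N}$ lies in $\bigoplus_{\alpha\in N}A$, i.e.\ the set $F_n:=\{\alpha\in N: h_n(\alpha)-q_nh_{n+1}(\alpha)\neq b_n\}$ is finite. Since $\bigcup_{n\in\NN}F_n$ is countable and $|N|=\lambda>\aleph_0$, there is some $\alpha_0\in N\setminus\bigcup_n F_n$; then $h_n(\alpha_0)-q_nh_{n+1}(\alpha_0)=b_n$ for all $n$, so $(h_n(\alpha_0))_{n\in\NN}$ solves $x_n-q_nx_{n+1}=b_n$ in $A$, contradicting the choice of $(q_n),(b_n)$. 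Hence $Q\cong H(X,A)$ is not cotorsion. The construction is entirely soft: the whole content is the pigeonhole fact that a countable union of finite subsets of $N$ cannot exhaust $N$, which is also precisely why — consistently with Theorem~\ref{t:alg compact} — the argument must and does break down for countable $\cT$. The only point requiring a little care is to invoke the solvability criterion for cotorsion in the right form, with $(q_n)$ a fixed sequence (e.g.\ $q_n=n+1$) and only the right-hand sides depending on $A$; there is no substantive obstacle beyond that.
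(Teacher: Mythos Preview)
Your proof is correct and essentially identical to the paper's: both use the diagram of Example~\ref{e:finite} with $|N|=\lambda$, invoke \cite[Proposition~2.0.2]{BaSh} with $q_n=n+1$ to get an unsolvable system in $A$, lift the right-hand sides to constant tuples in $\prod_N A$, and then argue by the pigeonhole that a hypothetical solution in $Q$ would project to a solution in $A$ at some coordinate outside the countable union of finite exceptional sets.
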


\begin{proof}
 Let $\cT$ and $X$ be as in Example \ref{e:finite} with $N:=\lambda$. We only need to show that
 $$H(X,A)\cong \prod_{\alpha\in \lambda}A/\bigoplus_{\alpha\in \lambda}A$$
  is not cotorsion.

  Since $A$ is not cotorsion, by \cite[Proposition 2.0.2]{BaSh} we can find a sequence $(a_n)_{n\in\NN}$ in $A$ such that there is no solution in $A$ to the system of equations:
  $$x_n-(n+1)x_{n+1}=a_n,$$
  for $n\in\NN$. For every $n\in\NN$ we define $g_n\in \prod_{\alpha\in \lambda}A$ by $g_n(\alpha):=a_n$. Then $[g_n]\in H$ and by \cite[Proposition 2.0.2]{BaSh} it is enough to show that the system of equations:
  $$x_n-(n+1)x_{n+1}=[g_n],$$
  for $n\in\NN$, has no solution in $H$.

  So suppose (to derive a contradiction) that the above system of equations has a solution $x_n=[h_n]$ in $H$, with $h_n\in \prod_{\alpha\in \lambda}A$. Let $n\in\NN$. Then
  $$[h_n]-(n+1)[h_{n+1}]-[g_n]=[0],$$
  so
  $$h_n-(n+1)h_{n+1}-g_n\in\bigoplus_{\alpha\in\lambda}A.$$
  and thus, the set
  $$u_n:=\{\alpha\in\lambda\:|\:h_n(\alpha)-(n+1)h_{n+1}(\alpha)\neq g_n(\alpha)\}$$
  is finite. Since $\lambda$ is a cardinal greater than $\aleph_0$, we can find
  $$\beta\in\lambda\setminus\bigcup_{n\in\NN}u_n.$$
  It is easily seen that $x_n=h_n(\beta)$ is a solution in $A$ to the system of equations:
  $$x_n-(n+1)x_{n+1}=a_n,$$
  contradicting the fact that there is no such solution.
\end{proof}

The following summarizing corollary gives a complete answer to the question of when is $H(X,A)$ cotorsion for every pointed pro-finite set $X$ of a given cardinality.
\begin{cor}\
\begin{enumerate}
  \item For any abelian group $A$, the following assertion holds:
  $H(X,A)$ is cotorsion for every pointed pro-finite set $X:\cT\to\Set^f_*$ with $|\cT|=\aleph_0$ (see Theorem \ref{t:alg compact}).
  \item If $\lambda>\aleph_0$ is a cardinal then the assertion '$H(X,A)$ is cotorsion for every pointed pro-finite set $X:\cT\to\Set^f_*$ with $|\cT|=\lambda$' holds iff $A$ is cotorsion (see Theorems \ref{t:p divisible} and \ref{t:finite counter}).
\end{enumerate}
\end{cor}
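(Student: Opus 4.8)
The plan is to derive this summarizing corollary by assembling the three theorems whose labels already appear in the parenthetical remarks, checking only that the logical packaging is sound; there is no new mathematical content to produce.

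First I would handle part (1). The one point that needs to be observed is that a diagram of \emph{finite} pointed sets automatically satisfies the Mittag-Leffler condition: for each $t\in\cT$ the family $\{\im(X(s)\to X(t))\}_{s\geq t}$ is a downward directed family of subsets of the finite set $X(t)$, hence has a minimal element, which is then the required stable image. Therefore Theorem \ref{t:alg compact} applies to every pointed pro-finite set $X:\cT\to\Set^f_*$ with $\cT$ countable and yields that $H(X,A)=H$ is algebraically compact, in particular cotorsion. Since $A$ was arbitrary, this is precisely assertion (1).

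Next I would establish the two directions of the equivalence in part (2). For the ``if'' direction: assuming $A$ cotorsion, Theorem \ref{t:p divisible}(3) already states that $H(X,A)$ is cotorsion for \emph{every} diagram $X:\cT\to\Set^f_*$ of finite pointed sets, with no restriction on $|\cT|$; in particular this holds for all such $X$ with $|\cT|=\lambda$, which is the displayed assertion. For the ``only if'' direction: assuming $A$ not cotorsion and given a cardinal $\lambda>\aleph_0$, Theorem \ref{t:finite counter} --- whose proof uses the construction of Example \ref{e:finite} with $N:=\lambda$, so that $H(X,A)\cong\prod_{\alpha\in\lambda}A/\bigoplus_{\alpha\in\lambda}A$ --- exhibits one diagram $X:\cT\to\Set^f_*$ with $|\cT|=\lambda$ for which $H(X,A)$ is not cotorsion, and this single counterexample falsifies the universally quantified assertion. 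Hence the displayed assertion and ``$A$ is cotorsion'' imply each other, which is the claimed iff.

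I expect no genuine obstacle here: all the substantive work lives in Theorems \ref{t:alg compact}, \ref{t:p divisible} and \ref{t:finite counter}. The only things worth spelling out in the write-up are the automatic Mittag-Leffler property of finite diagrams (so that part (1) is literally an instance of Theorem \ref{t:alg compact} rather than something needing a separate argument), and the elementary logical remark that a ``for all $X$'' statement together with the existence of a single explicit counterexample are complementary, so combining Theorems \ref{t:p divisible} and \ref{t:finite counter} gives the full equivalence in (2) and not merely one implication.
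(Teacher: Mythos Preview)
Your proposal is correct and matches the paper's approach: the corollary is stated there with only the parenthetical references and no separate proof, so the content is precisely the logical packaging you supply. The one observation you add---that diagrams of finite pointed sets automatically satisfy the Mittag-Leffler condition---is exactly the remark the paper made earlier in the introduction, so part (1) is indeed a direct instance of Theorem~\ref{t:alg compact}.
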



\end{document}